\numberwithin{equation}{section}
\newtheorem{theorem}{Theorem}[section]
\newtheorem{lemma}[theorem]{Lemma}
\newtheorem{proposition}[theorem]{Proposition}
\newtheorem{corollary}[theorem]{Corollary}
\theoremstyle{definition}
\newtheorem{definition}[theorem]{Definition}
\newtheorem{example}[theorem]{Example}
\theoremstyle{remark}
\newtheorem{remark}[theorem]{Remark}
\theoremstyle{theorem}
\newtheorem{TheoremA}{Theorem}
\newtheorem*{task}{Task}
\newtheorem*{question}{Question}
\newcommand*{\longhookrightarrow}{\ensuremath{\lhook\joinrel\relbar\joinrel\rightarrow}}
\newcommand{\ow}{\omega}
\newcommand{\C}{{\mathbb{C}}}
\newcommand{\R}{{\mathbb{R}}}
\newcommand{\Z}{{\mathbb{Z}}}
\renewcommand{\epsilon}{\varepsilon}
\renewcommand{\theta}{\vartheta}
\renewcommand{\S}{S}
\DeclareMathOperator{\Cl}{Clif}
\DeclareMathOperator{\Crit}{Crit}
\DeclareMathOperator{\Stab}{Stab}
\DeclareMathOperator{\FS}{{FS}}
\DeclareMathOperator{\Lie}{Lie}
\DeclareMathOperator{\id}{id}
\DeclareMathOperator{\im}{Im}
\DeclareMathOperator{\Symp}{Symp}
\DeclareMathOperator{\Ann}{Ann}
\DeclareMathOperator{\Fix}{Fix}
\DeclareMathOperator{\GL}{GL}
\DeclareMathOperator{\Aut}{Aut}
\newcommand{\proofend}{\hspace*{\fill} $\Box$\\}
\newcommand{\proofof}[1]{\noindent {\it Proof of #1. }}
\begin{document}
\title{On the topology of real Lagrangians in toric symplectic manifolds}
\author{Jo\'{e} Brendel, Joontae Kim, and Jiyeon Moon}

\address{Institut de Math\'ematiques, Universit\'e de Neuch\^atel, Rue Emile-Argand 11, 2000 Neuch\^atel, Switzerland}
\email{joe.brendel@unine.ch}

\address{School of Mathematics, Korea Institute for Advanced Study, 85 Hoegiro, Dongdaemun-gu, Seoul 02455, Republic of Korea}
\email{joontae@kias.re.kr}

\address{Department of Mathematics, Ajou University, 206 Worldcup-ro, Suwon 16499, South Korea}
\email{j9746@ajou.ac.kr}
\subjclass[2010]{Primary 53D12; Secondary 53D20}
\keywords{toric symplectic manifolds, real Lagrangian submanifolds, symplectic del Pezzo surfaces}
\date{}
\setcounter{tocdepth}{1}
\maketitle

\begin{abstract}
We explore the topology of real Lagrangian submanifolds in a toric symplectic manifold which come from involutive symmetries on its moment polytope. We establish a real analog of the Delzant construction for those real Lagrangians, which says that their diffeomorphism type is determined by combinatorial data. As an application, we realize all possible diffeomorphism types of connected real Lagrangians in toric symplectic del Pezzo surfaces.
\end{abstract}

\tableofcontents

\section{Introduction}\label{sec: intro}
A diffeomorphism $R$ on a symplectic manifold is called an \emph{antisymplectic involution} if it is an involution, $R \circ R = \id$, and if it is antisymplectic, $R^*\omega = -\omega$. Fixed point sets of antisymplectic involutions are either empty or Lagrangian. A Lagrangian $L \subset M$ is called \emph{real} if it is the fixed point set of an antisymplectic involution. We restrict ourselves to the study of real Lagrangians in \emph{toric} symplectic manifolds.

A symplectic manifold $(M,\omega)$ of dimension~$2n$ is called \emph{toric} if it is equipped with an effective Hamiltonian action of the torus $T^n$. Complex projective space $\C P^n$ is a typical example. A classical result by Atiyah--Guillemin--Sternberg \cite{Atiyah, GuilStern} states that the image of the moment map $\mu$ of a Hamiltonian torus action is a convex polytope~$\Delta \subset \Lie(T^n)^* = (\mathfrak{t}^n)^*$, called the \emph{moment polytope}. In the case of $\C P^n$ the moment polytope is the $n$-simplex. Toric manifolds are classified up to equivariant symplectomorphisms by their moment polytope. This was proved by Delzant~\cite{Del}, who starts out with a given polytope satisfying certain properties (called \emph{Delzant polytope}) and gives an explicit description of~$M$ as a symplectic quotient of a symplectic vector space. For details on the Delzant construction, see Section~\ref{sec: delzantconst}.

Let $\mathcal{S}_{\Delta}$ denote the group of lattice-preserving automorphisms of $(\mathfrak{t}^n)^*$ which leave $\Delta$ invariant. We construct antisymplectic involutions from symmetries of the moment polytope.

\begin{TheoremA}
\label{thm: antilift}
Let $(M,\omega)$ be a toric symplectic manifold with moment map $\mu$ and moment polytope $\Delta$. Furthermore, let $\sigma \in \mathcal{S}_{\Delta}$ be an involution of $\Delta$. Then $\sigma$ lifts to an antisymplectic involution $R^{\sigma}$ of $M$,
	\begin{equation} \label{eq: compintro}
	\mu \circ R^{\sigma} = \sigma \circ \mu. 
	\end{equation}
\end{TheoremA}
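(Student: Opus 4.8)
\section*{Proof proposal}

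The plan is to build $R^{\sigma}$ explicitly via the Delzant construction recalled in Section~\ref{sec: delzantconst}. Write $\Delta=\bigcap_{i=1}^{d}\{x:\langle x,v_i\rangle\ge\lambda_i\}$, where $v_1,\dots,v_d\in\mathfrak{t}^n$ are the inward primitive normals of the facets $F_1,\dots,F_d$, let $\beta\colon\R^d\to\mathfrak{t}^n$ be the surjection $e_i\mapsto v_i$ with kernel $\mathfrak{k}=\Lie(K)$, and recall that $M=\mu_K^{-1}(c)/K$ for the diagonal $T^d$--action on $\C^d$ with moment map $\mu_{T^d}$, where $c$ is the level prescribed by $(\lambda_i)$; the residual torus $T^n=T^d/K$ acts on $M$ with moment map $\mu$ and $\mu(M)=\Delta$. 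Since $\sigma$ is a lattice--affine automorphism of $(\mathfrak{t}^n)^{*}$ preserving $\Delta$, it permutes the facets: there is $s\in S_d$ with $\sigma(F_i)=F_{s(i)}$, and $s^2=\id$ because $\sigma^2=\id$. Comparing the defining inequalities of $F_i$ and $F_{s(i)}$ and using that $\sigma$ takes inward normals to inward normals gives the combinatorial input for the whole argument: if $A\in\GL(n;\Z)$ is the linear part of $\sigma$ and $a\in(\mathfrak{t}^n)^{*}$ its translation part, then $v_{s(i)}=(A^{-1})^{T}v_i$ and $\lambda_{s(i)}=\lambda_i+\langle a,v_{s(i)}\rangle$.

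Next I would lift $\sigma$ to $\C^d$ by setting $\tilde\sigma(z)=\bigl(\bar z_{s(1)},\dots,\bar z_{s(d)}\bigr)$, i.e.\ the coordinate permutation induced by $s$ composed with complex conjugation. Complex conjugation is antisymplectic for the standard form on $\C^d$ and permuting coordinates is symplectic, so $\tilde\sigma$ is antisymplectic; since $s^2=\id$ and conjugation commutes with the permutation, $\tilde\sigma$ is an involution. It remains to check that $\tilde\sigma$ descends to $M=\mu_K^{-1}(c)/K$, for which two facts are needed. First, one computes $\tilde\sigma(k\cdot z)=\psi(k)\cdot\tilde\sigma(z)$, where $\psi\colon T^d\to T^d$ is inversion composed with the permutation automorphism induced by $s$; dualizing $v_{s(i)}=(A^{-1})^{T}v_i$ shows that the permutation action of $s$ on $\R^d$ intertwines $\beta$ with $(A^{-1})^{T}$, hence preserves $\mathfrak{k}=\ker\beta$, so $\psi$ preserves $K$ and $\tilde\sigma$ carries $K$--orbits to $K$--orbits. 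Second, since $|\bar z_k|=|z_k|$, the map $\tilde\sigma$ intertwines $\mu_{T^d}$ with the permutation of $(\R^d)^{*}$ induced by $s$; together with the relation for the $\lambda_i$ this shows that $\tilde\sigma$ preserves the level set $\mu_K^{-1}(c)$. Consequently $\tilde\sigma$ descends to a smooth antisymplectic involution $R^{\sigma}$ of $M$.

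Finally, to establish \eqref{eq: compintro} I would unwind the reduced moment map. For $z$ in the level set, $\mu([z])$ is the unique $\xi\in(\mathfrak{t}^n)^{*}$ with $\beta^{*}\xi=\mu_{T^d}(z)-c'$ for the appropriate constant $c'=(\lambda_i)$; feeding $\mu_{T^d}(\tilde\sigma z)$ (computed above) through this, and using that the permutation induced by $s$ intertwines $\beta^{*}$ with $A^{-1}$ and the translation relation $\lambda_{s(i)}=\lambda_i+\langle a,v_{s(i)}\rangle$, one obtains $\beta^{*}\bigl(\mu([\tilde\sigma z])\bigr)=\beta^{*}\bigl(\sigma^{-1}(\mu([z]))\bigr)$, and hence $\mu\circ R^{\sigma}=\sigma^{-1}\circ\mu=\sigma\circ\mu$, the last equality because $\sigma$ is an involution.

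I expect the main obstacle to be bookkeeping rather than anything conceptual: fixing compatible conventions for inward versus outward normals, for the sign and the shift in $\mu_{T^d}$, and for the direction of the permutation $s$, so that all the dualized relations between $\beta$, $\beta^{*}$, the permutation matrices, $A$ and $A^{T}$ line up, and in particular so that the contribution of the affine part $a$ of $\sigma$ to the $\lambda_i$ is absorbed correctly. Once the pair of identities $v_{s(i)}=(A^{-1})^{T}v_i$ and $\lambda_{s(i)}=\lambda_i+\langle a,v_{s(i)}\rangle$ is pinned down, both the descent of $\tilde\sigma$ and the intertwining relation \eqref{eq: compintro} follow by formal manipulation of the Delzant exact sequence.
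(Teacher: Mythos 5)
Your proposal is correct and follows essentially the same route as the paper: the map $\tilde\sigma(z)=(\bar z_{s(1)},\dots,\bar z_{s(d)})$ is exactly the paper's $\rho=\rho^{0}\circ\Phi$ from the Delzant construction, with the same checks that the induced torus automorphism preserves $K$, that the level set is preserved, and that the reduced map intertwines $\mu$ with $\sigma$ (the paper merely packages this as a symplectic lift $\varphi^{\sigma}$ composed with the standard conjugation involution $R^{0}$, and its normalization of the barycentre makes $\sigma$ linear so your affine part $a$ vanishes).
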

\noindent
The antisymplectic involution $R^{\sigma}$ we construct is not unique with respect to the property~$(\ref{eq: compintro})$. Henceforth, we will refer to it as the \textit{standard antisymplectic lift of~$\sigma$}. The most basic example for Theorem~\ref{thm: antilift} is the following one. Let $(S^2, \omega)$ be the two-sphere equipped with its area form. The toric structure is given by rotation around a fixed axis and the corresponding moment map is given by projection onto that axis, see Figure~\ref{fig: s2}. Therefore $\Delta$ can be identified with a segment in $\R$. Let $\sigma$ be the only non-trivial involution on $\Delta$ given by the flip around the mid-point of the segment. The corresponding antisymplectic involution on $S^2$ is given by the flip fixing the equator. 

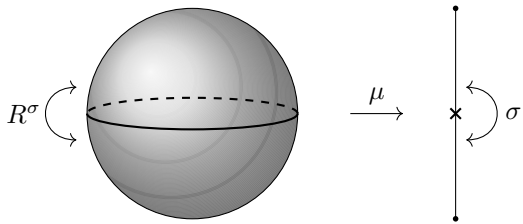
\begin{figure}[h]
\centering
\begin{tikzpicture}[scale = 0.7]
    \shade[ball color = gray!40, opacity = 0.4] (0,0) circle (2cm);
    \draw (5,-2)--(5,2);
    \draw[thick] (4.9,-0.1)--(5.1,0.1);
    \draw[thick] (4.9,0.1)--(5.1,-0.1);
    \fill[fill=black] (5,-2) circle (1.5pt);
    \fill[fill=black] (5,2) circle (1.5pt);
    \draw[black,thick] (0,0) +(180:2) arc (180:360:2 and 0.3);
    \draw[black, thick, dashed] (0,0) +(0:2) arc (0:180:2 and 0.3);
    \draw (0,0) circle (2cm);
    \draw[->] (3,0)--node[above]{$\mu$}(4,0);
    \draw[<->] (-2.2,-0.5)  to [out=180,in=180, looseness=2] (-2.2,0.5);
    \draw[<->] (5.2,-0.5)  to [out=0,in=0, looseness=2] (5.2,0.5);
    \node at (6.1,0) {$\sigma$};
    \node at (-3.2,0) {$R^{\sigma}$};
\end{tikzpicture}
\caption{The flip $\sigma$ on $\Delta$ and the corresponding antisymplectic involution $R^{\sigma}$ on $S^2$.}
\label{fig: s2}
\end{figure}

Even for general toric symplectic manifolds $M$, there is a particularly simple way of understanding the involutions $R^{\sigma}$ in Theorem~\ref{thm: antilift} if we restrict our attention to the open and dense subset $\mu^{-1}(\mathring{\Delta})$ formed by the pre-image of the interior of $\Delta$. In fact, $\mu^{-1}(\mathring{\Delta})$ is equivariantly symplectomorphic to $T^n \times \mathring{\Delta}$, when we equip the latter space with the natural $T^n$-action and the symplectic form coming from the inclusion $T^n \times \mathring{\Delta} \subset T^*T^n \cong T^n \times (\mathfrak{t}^n)^*$. Under this identification, the moment map corresponds to the natural projection $T^n \times \mathring{\Delta} \rightarrow \mathring{\Delta}$. We observe
\begin{enumerate}
\item For any lattice preserving involution $\sigma$ on the moment polytope, the map $(\sigma^T)^{-1} \times \sigma$ defines a symplectomorphism on $T^n \times \mathring{\Delta}$. The transpose $\sigma^T \colon \mathfrak{t}^n \rightarrow \mathfrak{t}^n$ is well-defined on $T^n$, since it preserves the lattice. Hence we obtain a \emph{symplectic} involution on $T^n \times \mathring{\Delta}$.
\item There is a natural antisymplectic involution $R^0$ on $T^n \times \mathring{\Delta}$ given by taking the group inverse on the $T^n$-component. The involution $R^0$ preserves the fibres of $T^n \times \mathring{\Delta} \rightarrow \mathring{\Delta}$.
\end{enumerate}
\noindent
The desired antisymplectic involution $R^{\sigma}$ is obtained by composing the maps obtained in the two observations. Since they commute, the resulting diffeomorphism will indeed be an involution. The main problem with this heuristic argument is extending everything to the singular fibres over $\partial \Delta$. In Section~\ref{sec: lifting} we thus stick to the more conventional approach via Delzant's point of view on toric manifolds. As we shall see, 
this approach also has the merit of providing a method to understand the fixed point set of $R^\sigma$.

In the special case where $\sigma = \id$, we obtain an antisymplectic involution $R^0$ which leaves the moment map invariant $\mu \circ R^0 = \mu$. This involution is widely known in toric geometry, where it corresponds to complex conjugation. Its fixed point set $\Fix R^0$ is the \emph{real locus} of the toric variety, in the case of $\C P^n$ it corresponds to $\R P^n$. Duistermaat \cite{Duist} studied more general real Lagrangians $L=\Fix R$ in Hamiltonian $T^k$-spaces $(M,\ow,\mu)$ for any $k \leqslant n$, which arise as the fixed point set of involutions leaving the moment map invariant, $\mu \circ R = \mu$. He proved that real Lagrangians of this type are tight and have a convex image under the moment map. Tightness of the real Lagrangian $L$ means that for any $\xi\in \mathfrak{t}^n$ the restriction $H_\xi|_L$ of the Hamiltonian function $H_\xi=\langle\mu, \xi \rangle$ is \emph{tight} in the sense that the sum of the Betti numbers of $L$ and the one of the critical set of $H_\xi|_L$ coincide.

Another class of interesting Lagrangians are regular fibres $\mu^{-1}(x) \subset M$ of the toric moment maps. Entov--Poterovich \cite{EP} studied the rigidity of intersections of \emph{Lagrangian fibres}, namely that the barycentric fibre in a closed monotone symplectic manifold cannot be displaced by a Hamiltonian isotopy. See also results of Fukaya--Oh--Ohta--Ono \cite{FOOO}. Theorem~\ref{thm: antilift} shows that the toric fibre $\mu^{-1}(0)$ is real whenever the moment polytope is invariant under the central symmetry $\sigma = -\id$. Indeed, we will see that $\Fix R^{\sigma} \neq \varnothing$, and since $0 \in (\mathfrak{t}^n)^*$ is the only fixed point of $-\id$, the fixed point set of $R^{-\id}$ is the entire fibre $\mu^{-1}(0)$ for dimensional reasons. Under some additional assumptions on $M$, one can show that $\mu^{-1}(0)$ being real is a sufficient condition for $\Delta$ to be invariant under $-\id$. We refer to~\cite{Bre} for details. In a sense, the two classical situations $R=\id$ and $R=-\id$ are opposite to each other and all other $R^{\sigma}$ which we obtain from Theorem~\ref{thm: antilift} are intermediate cases.

The remainder of the paper is dedicated to a topological study of the fixed point sets of the involutions $R^{\sigma}$. The main result in this direction is the so-called \emph{real Delzant construction}, which states that the diffeomorphism type of $L=\Fix(R^\sigma)$ is completely determined by the moment polytope $\Delta$ and the involution $\sigma \in \mathcal{S}_{\Delta}$.

We briefly explain relevant notions in the classical Delzant construction, see Section \ref{sec: delzantconst} for details. The moment polytope $\Delta$ with $k$ facets yields the moment map
$$
\nu \colon \C^k\longrightarrow \mathfrak{k}^*,
$$
where $\mathfrak{k}=\Lie(K)$ is the Lie algebra of the kernel $K$ of the characteristic map $\pi\colon T^k\to T^n$.
By the Marsden--Weinstein theorem, we can reconstruct the toric symplectic manifold,
$$
M \cong \nu^{-1}(0)/K.
$$
In a similar vein, we will define a real analog of the maps $\nu$ and $\pi$, namely
$$
\pi_R\colon \Fix(\rho_{T^k})\longrightarrow \Fix(R_{T^n})
$$
the \emph{real characteristic map}, and
$$
\nu_R\colon \Fix(\rho)\longrightarrow (\mathfrak{k}/\mathfrak{k}_R)^*,
$$
where $\mathfrak{k}_R\subset \mathfrak{k}$ is the Lie algebra of the kernel $K_R=\ker \pi_R$. Here $\rho$ and $\rho_{T^k}$ are involutions on $\C^k$ and $T^k$, respectively, determined by the involution $\sigma\in \mathcal{S}_\Delta$. See Section \ref{sec: realdelconst} for details.

The main result of the paper is the following {\it real Delzant construction}.
\begin{TheoremA}\label{thm: realdelzant}
Let $(M,\ow,\mu)$ be a toric symplectic manifold and let $R^\sigma$ be the standard antisymplectic involution of $M$ given by the lift of an involution $\sigma\in \mathcal{S}_\Delta$. Then
	the real Lagrangian $L=\Fix(R^\sigma)$ is diffeomorphic to $\nu_R^{-1}(0)/K_R$.
\end{TheoremA}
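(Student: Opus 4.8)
The plan is to run the classical Delzant construction and its antisymplectic symmetries in parallel, and then restrict to fixed point sets. Recall that the Delzant construction realizes $M = \nu^{-1}(0)/K$ with $\nu\colon \C^k \to \mathfrak{k}^*$ the moment map for the residual $K$-action on $\C^k$, where $K = \ker(\pi\colon T^k \to T^n)$. The standard antisymplectic lift $R^\sigma$ on $M$ is induced, via this quotient presentation, by an antisymplectic involution $\rho$ on $\C^k$ which descends through the quotient; the torus-level map $\rho_{T^k}$ on $T^k$ is the one covering $R^0_{T^n} \circ \sigma^T$-type data, and by construction $\rho$ intertwines the $T^k$-action with $\rho_{T^k}$ and satisfies a moment-map compatibility $\nu \circ \rho = -\rho_{\mathfrak{k}}^* \circ \nu$ for the appropriate linear involution on $\mathfrak{k}^*$. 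I would begin by making all of this explicit: write $\rho$ on $\C^k$ coordinatewise (a composition of coordinate complex conjugations with a permutation of coordinates coming from $\sigma$ acting on the facets of $\Delta$), verify $\rho$ is antisymplectic and commutes with the $K_R$-action, and check that it passes to the symplectic quotient as $R^\sigma$ — this last point should already be essentially contained in the construction in Section~\ref{sec: lifting}.

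The next step is the set-theoretic identity at the heart of the matter: $\Fix(R^\sigma) = \Fix(\rho|_{\nu^{-1}(0)})/(K\cap \text{stabilizing data})$. Concretely, a point $[z] \in M = \nu^{-1}(0)/K$ is fixed by $R^\sigma = [\rho]$ if and only if $\rho(z) = g\cdot z$ for some $g \in K$. The subtlety is that this is an equivariant, not a pointwise, fixed-point condition. The standard trick is averaging: since $\rho$ is an involution and $K$ is a torus (hence compact and connected), the $1$-cocycle $g$ can be trivialized — one shows that the $K$-orbit $K\cdot z$ contains a genuine $\rho$-fixed point, so that $\Fix(R^\sigma) = \bigl(\Fix(\rho)\cap \nu^{-1}(0)\bigr)/K_R$ where $K_R = \ker \pi_R = \{g \in K : g \text{ fixes some (hence, by connectedness, the relevant part of) } \Fix\rho\}$, i.e. $K_R$ is exactly the subgroup through which the residual action on the fixed locus factors. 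Identifying $\Fix(\rho)\cap\nu^{-1}(0)$ with $\nu_R^{-1}(0)$ is then a matter of unwinding definitions: $\nu_R$ is by design the restriction of $\nu$ to $\Fix(\rho) \subset \C^k$, landing in $(\mathfrak{k}/\mathfrak{k}_R)^*$ because the $\mathfrak{k}_R^*$-component of $\nu$ vanishes identically on $\Fix(\rho)$ (the half of the moment map coordinates that gets killed by the antisymplectic symmetry). Putting these together yields $L = \Fix(R^\sigma) \cong \nu_R^{-1}(0)/K_R$.

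Two points need genuine care, and I expect the averaging/connectedness argument to be the main obstacle. First, one must ensure the quotient $\nu_R^{-1}(0)/K_R$ is smooth: the action of $K_R$ on $\nu_R^{-1}(0)$ must be free (or at least that $0$ is a regular value and the action is free), which should follow from freeness of the $K$-action on $\nu^{-1}(0)$ in the Delzant construction by restriction, but one should check that $\nu_R^{-1}(0)$ is a manifold of the expected dimension — this is where the Delzant (smoothness/primitivity) hypotheses on $\Delta$, inherited by the "real" combinatorial data, get used. Second, in the averaging step one must be careful that the element $g \in K$ with $\rho(z) = g\cdot z$ can be chosen continuously/consistently and that "correcting" $z$ by a square root $h$ with $h^2 = g$ (possible since $K$ is a torus, so divisible) lands back in $\nu^{-1}(0)$ and in $\Fix(\rho)$; the compatibility $\nu\circ\rho = -\rho_{\mathfrak k}^*\circ\nu$ together with $z \in \nu^{-1}(0)$ is what makes this closed up. I would also remark that the diffeomorphism is natural enough to intertwine the residual structures (e.g. the real moment map $\nu_R$ descends to the "real moment map" on $L$ whose image is the combinatorial fixed polytope $\Fix(\sigma) \subset \Delta$), which is what makes the statement genuinely a "real Delzant construction" and not just an abstract diffeomorphism; this naturality is also what will be needed for the del Pezzo application.
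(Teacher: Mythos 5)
Your proposal follows essentially the same route as the paper: present $M$ as the Delzant quotient $\nu^{-1}(0)/K$, lift $\sigma$ to the coordinatewise antisymplectic involution $\rho$ on $\C^k$, identify $\nu_R^{-1}(0)$ with $\Fix(\rho)\cap\nu^{-1}(0)$ via the vanishing of $\nu$ against $\mathfrak{k}_R$, and prove $\Fix(R^\sigma)=I\bigl(\nu_R^{-1}(0)/K_R\bigr)$ by the cocycle-trivialization argument, where $\rho(z)=t.z$ forces $\rho_K(t)=t^{-1}$ by freeness and a square root $\tilde t\in K$ with $\rho_K(\tilde t)=\tilde t^{-1}$ moves $z$ into $\Fix(\rho)$. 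The points you flag as needing care (freeness of the $K_R$-action inherited from $K$, injectivity of the induced map on quotients and of its differential, and the compatible choice of square root) are exactly the ones the paper addresses, so the proposal is correct and matches the paper's proof.
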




As a partial generalization of Duistermaat's result, we prove convexity and tightness for the real Lagrangians $L=\Fix(R^\sigma)$.
\begin{TheoremA}\label{thm: tightconvex}
Let $(M,\ow,\mu)$ be a toric symplectic manifold with moment polytope~$\Delta$ and let~$R^\sigma$ be the standard antisymplectic involution of $M$ given by the lift of an involution $\sigma\in \mathcal{S}_\Delta$. Then $\mu(L)=\Fix(\sigma)$ is convex, and	for any $\xi\in \mathfrak{t}^n$ we have
	$$
	\dim H_*(L;\Z_2) = \dim H_*(\Crit(H_\xi|_L);\Z_2),
	$$
where $H_\xi$ is the smooth function $\langle \mu,\xi \rangle$ on $M$ and $\Crit(H_\xi|_L)$ denotes the set of critical points of $H_\xi|_L$.
\end{TheoremA}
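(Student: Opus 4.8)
The plan is to reduce both statements to the classical toric picture by working one Hamiltonian $H_\xi$ at a time and exploiting the semiconjugacy~$(\ref{eq: compintro})$, $\mu \circ R^\sigma = \sigma \circ \mu$. For convexity, note first that $\mu(L) \subseteq \Fix(\sigma)$ is immediate from~$(\ref{eq: compintro})$: if $p \in L$ then $\sigma(\mu(p)) = \mu(R^\sigma(p)) = \mu(p)$. The reverse inclusion $\Fix(\sigma) \subseteq \mu(L)$ should follow from the real Delzant construction (Theorem~B) together with a fibrewise analysis: over a point $x \in \mathring{\Delta} \cap \Fix(\sigma)$ the heuristic model $T^n \times \mathring\Delta$ from Section~\ref{sec: lifting} shows that $R^\sigma$ restricted to the fibre $\mu^{-1}(x) \cong T^n$ is (conjugate to) the composition of $v \mapsto -v$ with the linear torus automorphism induced by $\sigma^T$, whose fixed point set is a nonempty union of subtori; the boundary case is handled by the Delzant-type description of the singular fibres. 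Since $\Fix(\sigma)$ is the intersection of the convex polytope $\Delta$ with the linear subspace $\ker(\sigma - \id)$, it is convex, giving the first assertion.

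For tightness, the key step is to realize $H_\xi|_L$ as a \emph{perfect} Morse--Bott function over $\Z_2$, which is the content of the stated Betti-number identity. I would proceed as follows. Restrict attention to $\xi \in \Fix(\sigma^T) \subseteq \mathfrak{t}^n$ (equivalently, $H_\xi \circ R^\sigma = H_\xi$ via~$(\ref{eq: compintro})$ — one should first check that it suffices to treat such $\xi$, since a generic $\xi$ can be replaced by its $\sigma^T$-average without changing the relevant topology of the critical set, or else argue directly that $H_\xi|_L$ and $H_{\xi'}|_L$ have the same critical-set Betti numbers when $\xi, \xi'$ lie in the same chamber). Then $H_\xi$ is a $T^n$-invariant Morse--Bott function on $M$ with even-dimensional critical submanifolds and even indices — the standard toric fact. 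The crucial observation is that $R^\sigma$ preserves $H_\xi$ and acts on each critical submanifold $F \subseteq M$ of $H_\xi$; one shows, using the local normal form of the $T^n$-action near $F$ together with the explicit local form of $R^\sigma$ from the proof of Theorem~A, that the negative normal bundle of $F$ inside $M$ is $R^\sigma$-equivariantly a \emph{complex} bundle on which $R^\sigma$ acts antilinearly, so that $\Fix(R^\sigma) \cap (\text{negative normal bundle})$ is a real form of half the dimension. Consequently the Morse--Bott index of $H_\xi|_L$ along $F \cap L$ equals half the index of $H_\xi$ along $F$, and — this is the point — a half-dimensional totally real subbundle of a complex bundle has its $\Z_2$-Euler class (Stiefel--Whitney top class) equal to the mod-$2$ reduction of the Euler class of the ambient complex bundle, so the Morse--Bott lacunary/completion principle over $\Z_2$ applies fibrewise.

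Granting this, the argument concludes by the $\Z_2$-Morse--Bott inequalities: since $H_\xi$ itself is $\Z_2$-perfect on $M$ (toric manifolds have only even-dimensional cohomology, or one invokes that the critical submanifolds are themselves smaller toric manifolds and induct), one expects the analogous perfection downstairs on $L$. Concretely, I would set up the Morse--Bott spectral sequence (or the handle-by-handle filtration) for $H_\xi|_L$ with $\Z_2$ coefficients, and show each connecting differential vanishes because the attaching is along the $\Z_2$-orientable (indeed, the relevant bundles carry a $\Z_2$-trivial $w_1$, being real forms of complex bundles) negative normal bundles just described; this forces $\dim H_*(L;\Z_2) = \sum_F \dim H_{*}(F \cap L; \Z_2) = \dim H_*(\Crit(H_\xi|_L);\Z_2)$. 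An alternative, possibly cleaner route: invoke the general principle (due to Duistermaat, and to Biss--Guillemin--Holm, O'Shea--Sjamaar in the ``real locus'' setting) that the fixed locus of an antisymplectic involution compatible with a Hamiltonian torus action inherits ``half'' the equivariant cohomology, so that a $T^n$-perfect function restricts to a perfect function on the real locus; our $L = \Fix(R^\sigma)$ is exactly such a real locus for the residual torus fixed by $\sigma^T$, after the change of coordinates supplied by Theorem~B.

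The main obstacle I anticipate is the boundary/singular-fibre bookkeeping: making the claim ``negative normal bundle of $F$ is $R^\sigma$-antilinear complex, hence $F \cap L$ has half the Morse--Bott index and $\Z_2$-orientable negative bundle'' fully rigorous requires the precise local model for $R^\sigma$ near the $T^n$-fixed points and edge strata, which is only available through the Delzant-quotient description rather than the clean $T^n \times \mathring\Delta$ heuristic. I would therefore carry out the index and orientability computation inside $\nu_R^{-1}(0)/K_R$ using Theorem~B, where everything is linear-algebraic on $\C^k$ and its real form, reducing the whole theorem to a statement about the flat model $(\C^k, \rho)$ with its residual torus action — there the perfection of $H_\xi|_L$ is a direct computation.
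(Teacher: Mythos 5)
Your opening moves are the right ones (the inclusion $\mu(L)\subseteq\Fix(\sigma)$ from $\mu\circ R^\sigma=\sigma\circ\mu$, and convexity of $\Fix(\sigma)=\Delta\cap\ker(\sigma-\id)$), but both halves of the proposal have a genuine gap at exactly the point where the real work happens. For convexity, the reverse inclusion $\Fix(\sigma)\subseteq\mu(L)$ is precisely the statement that $\Fix(R^\sigma)$ meets every fibre over $\Fix(\sigma)$, and your fibrewise heuristic on $T^n\times\mathring\Delta$ plus ``the boundary case is handled by the Delzant-type description'' does not establish this: an affine involution of a torus fibre need not have fixed points in general, so nonemptiness must be argued, and the singular fibres are where the argument is most delicate. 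The paper's proof is elementary and bypasses Theorem~B entirely: for $p$ with $\mu(p)=x\in\Fix(\sigma)$ one has $R^\sigma(p)=t.p$ for some $t\in T^n$ by transitivity on fibres, applying $R^\sigma$ again gives $R^\sigma_{T^n}(t)t\in\Stab(p)$, and since $\Stab(p)$ is a subtorus one can choose $\tilde t$ in the closed subgroup generated by $t$ with $\tilde t^2=t$ and $R^\sigma_{T^n}(\tilde t)\tilde t\in\Stab(p)$; then $\tilde t.p\in\Fix(R^\sigma)$. This square-root-in-the-stabilizer step is the missing ingredient in your sketch.

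For tightness, you are circling the key observation without landing on it. Decompose $\xi=\xi_1+\xi_2$ with $\sigma^*\xi_1=-\xi_1$ and $\sigma^*\xi_2=\xi_2$ (so $\xi_2$ is your $\sigma^*$-average). Then for $p\in L$ one computes $\langle\mu(p),\xi_1\rangle=\langle\mu(p),\sigma^*\xi_1\rangle=-\langle\mu(p),\xi_1\rangle$, i.e.\ $H_{\xi_1}|_L\equiv 0$, so $H_\xi|_L=H_{\xi_2}|_L$ \emph{as functions} --- no genericity or chamber argument is needed, and your fallback claim that $H_\xi|_L$ and $H_{\xi'}|_L$ have the same critical-set Betti numbers for $\xi,\xi'$ in a common chamber is false in general (perturbing $\xi$ can collapse a positive-dimensional critical manifold of $H_\xi|_L$ to isolated points). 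Once the exact equality is in hand, $\xi_2$ lies in the Lie algebra of the subtorus $T_0=\Fix(\sigma^*)^{\circ}$, whose moment map $j^*\mu$ satisfies $j^*\mu\circ R^\sigma=j^*\mu$, and Duistermaat's tightness theorem applies verbatim to finish the proof. The lengthy Morse--Bott development (antilinear action on negative normal bundles, $\Z_2$-Euler classes, perfection of the spectral sequence) is therefore unnecessary, and as written it rests on several unverified local-model claims near the singular strata; the ``alternative, cleaner route'' you mention at the end is in fact the intended proof, but it only works after the reduction $H_\xi|_L=H_{\xi_2}|_L$ that your write-up leaves unestablished.
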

In particular, both Theorem~\ref{thm: realdelzant} and \ref{thm: tightconvex} imply that $L=\Fix(R^\sigma)$ is not empty.
Example \ref{ex: tightconvexfails} shows that the tightness and the convexity, in general, fail if the real Lagrangian is not of the form $\Fix R^{\sigma}$.

As an application, we show that the class of real Lagrangians that come from involutive symmetries of moment polytopes provides a starting point for the classification of real Lagrangians in toric symplectic del Pezzo surfaces.

Recall that the symplectic del Pezzo surfaces $Q=S^2\times S^2$ and $X_k=\C P^2\# k\overline{\C P^2}$ for~$0\le k \le 3$ are \emph{monotone} and toric. Being monotone means that their first Chern class is positively proportional to the cohomology class of the symplectic form, see Section~\ref{sec: delpezzo}. Using Smith theory, the Arnold lemma, and homological obstructions for Lagrangians, we show that any real Lagrangian $L$ in a toric symplectic del Pezzo surface $M$ must be diffeomorphic to one of cases listed in Table~\ref{tab: delpezzo}. We then realize all of these possible cases as fixed point sets of lifted antisymplectic involutions $R^{\sigma}$. The real Delzant construction will be used to determine their diffeomorphism types.  We refer to Section \ref{sec: delpezzo} for details.

\begin{TheoremA}
Let $L$ be a connected real Lagrangian submanifold of a toric symplectic del Pezzo surface $M$. Then $L$ is diffeomorphic to one of the surfaces in Table~\ref{tab: delpezzo}, and each of these diffeomorphism types is realized as the fixed point set $\Fix R^{\sigma}$ of an antisymplectic involution from Theorem~\ref{thm: antilift}.
\end{TheoremA}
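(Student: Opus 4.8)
The plan is to establish the statement in two independent parts: an \emph{a priori restriction}, bounding the diffeomorphism type of an arbitrary connected real Lagrangian $L=\Fix R$ in one of the five toric symplectic del Pezzo surfaces $\C P^2$, $S^2\times S^2$ and $\C P^2\#k\overline{\C P^2}$ with $k=1,2,3$; and a \emph{realization}, producing for every surface in Table~\ref{tab: delpezzo} an explicit involution $\sigma\in\mathcal S_\Delta$ with $\Fix R^\sigma$ of that type. In the restriction $R$ is an \emph{arbitrary} antisymplectic involution, so Theorem~\ref{thm: tightconvex} is not available; the realization, by contrast, will be driven by the real Delzant construction of Theorem~\ref{thm: realdelzant}.

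For the restriction, recall that $\Fix R$ is a closed Lagrangian, hence here a closed surface, so $L$ is diffeomorphic to an orientable surface of some genus $g$ or to a connected sum of $h\ge 1$ copies of $\R P^2$, and it suffices to bound $g$, respectively $h$. The Smith inequality applied to the $\Z_2$-action generated by $R$ gives $\dim H_*(L;\Z_2)\le\dim H_*(M;\Z_2)=2+b_2(M)$, and since $b_2(M)\le 4$ for del Pezzo surfaces this already caps the (non-orientable) genus. The parity $\chi(L)\equiv\chi(M)\pmod 2$, valid for any smooth involution via the free double cover $M\setminus L\to(M\setminus L)/R$, together with the Arnold lemma on real Lagrangians --- non-displaceability of $L$, which renders $HF(L,L;\Z_2)$ nonzero and a module over the quantum homology of $M$ --- then fixes the orientability of $L$ and kills several borderline types. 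The remaining ones are eliminated by the homological obstructions for Lagrangian surfaces: when $L$ is orientable one has $[L]\cdot[L]=-\chi(L)$ and $\langle[\omega],[L]\rangle=0$, and since $[\omega]$ is positively proportional to $c_1(M)$ this is a system in the intersection lattice $H_2(M;\Z)$ that often has no solution (excluding spheres, tori or higher-genus surfaces in certain homology classes); meanwhile the non-existence of Lagrangian Klein bottles in $\C P^2$ and in $S^2\times S^2$ (Shevchishin, Nemirovski) disposes of the non-orientable borderline cases there. Carried out over the five surfaces, this yields exactly the list of Table~\ref{tab: delpezzo}.

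For the realization, I would feed, for each entry of the table, a Delzant polytope $\Delta$ of the relevant del Pezzo surface and a suitably chosen involution $\sigma\in\mathcal S_\Delta$ into the real Delzant construction, and read the diffeomorphism type of $\Fix R^\sigma\cong\nu_R^{-1}(0)/K_R$ off the combinatorics of $(\Delta,\sigma)$ by Theorem~\ref{thm: realdelzant}. The backbone is provided by two distinguished cases: $\sigma=\id$, where $R^\sigma=R^0$ is complex conjugation and $\Fix R^0$ is the real locus of the toric variety ($\R P^2$ in $\C P^2$, the torus $\R P^1\times\R P^1$ in $S^2\times S^2$, and a connected sum of copies of $\R P^2$ in each blow-up); and $\sigma=-\id$, available whenever $\Delta$ is centrally symmetric (the square for $S^2\times S^2$, the regular hexagon for $\C P^2\#3\overline{\C P^2}$), where $\Fix R^\sigma=\mu^{-1}(0)\cong T^2$ as noted in the introduction. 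The remaining entries come from reflections and the other involutions in the symmetry groups of the triangle, the square, and the monotone quadrilateral, pentagon and hexagon; in each case Theorem~\ref{thm: realdelzant} reduces the identification of $\Fix R^\sigma$ to a finite combinatorial computation, and running through all of them fills in the table.

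The main obstacle is the completeness of the restriction. Smith theory and the Euler-characteristic parity are blunt, and the genuinely borderline surfaces --- tori, Klein bottles, and, in the larger blow-ups, the genus-two surface, all of which saturate or nearly saturate the Smith bound --- have to be admitted or ruled out using the non-trivial input on Lagrangian surfaces in rational symplectic $4$-manifolds. The delicate point is to do this so that the resulting list coincides precisely with what the real Delzant construction is able to output, i.e.\ so that ``not excluded by the obstructions'' and ``realized by some $R^\sigma$'' describe the same surfaces; once that list is pinned down, the realization step, though lengthy, is routine in view of Theorem~\ref{thm: realdelzant}.
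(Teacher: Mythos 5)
The realization half of your plan coincides with the paper's: real loci for $\sigma=\id$, the barycentric fibre for $\sigma=-\id$, and reflections for the remaining entries, all identified via Theorem~\ref{thm: realdelzant}. The restriction half, however, contains a genuine gap, concentrated in the one case where the soft obstructions fail: the torus in $X_1=\C P^2\#\overline{\C P^2}$. Smith theory and the Euler-characteristic parity allow a $T^2$ there, and the homological obstruction only forces $[L]=0$ (writing $[L]=aH-bE_1$, the equations $3a-b=0$ and $a^2-b^2=0$ give $a=b=0$), which is no contradiction: $X_1$ genuinely contains null-homologous monotone Lagrangian tori (e.g.\ the barycentric moment fibre), and these are even non-displaceable, so the Floer-theoretic statement you call the ``Arnold lemma'' (non-vanishing of $HF(L,L)$ as a module over quantum homology) cannot exclude them. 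The tool the paper actually uses under that name is entirely different: Arnold's lemma on involutions of closed oriented $4$-manifolds, which says that for an orientation-preserving involution $\tau$ with two-dimensional fixed locus, $[\Fix(\tau)]_{\Z_2}$ is the characteristic element of the twisted intersection form $(\alpha,\beta)\mapsto\alpha\cdot\tau_*\beta$. Combined with the observation that any antisymplectic involution of $X_1$ acts as $-\id$ on $H_2(X_1;\Z)$ (it preserves the intersection form, squares to the identity, and reverses $[\omega]$ and hence $c_1$), an orientable real Lagrangian would be null-homologous, the characteristic element of the ordinary mod-$2$ intersection form would vanish, and the form of $X_1$ would have to be even --- a contradiction, since it is odd. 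Without this input your list would wrongly admit $T^2$ in the $X_1$ row, and your stated hope that ``not excluded'' and ``realized'' coincide would fail.

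On the secondary points: your appeal to Shevchishin--Nemirovski for the non-existence of Lagrangian Klein bottles in $S^2\times S^2$ is a legitimate alternative to the paper's citation of Kim's lemma (in $\C P^2$ the Klein bottle is already excluded by Smith theory plus parity, so no deep input is needed there). The remainder of your restriction --- the Smith bound, the parity of $\chi$, and the identities $c_1(M)[L]=0$, $[L]\cdot[L]=-\chi(L)$ ruling out $\Sigma_g$ for $g\ge 2$ and spheres in $X_1$ --- matches the paper's Lemma~\ref{lem: lagindelpezzo} and the surrounding discussion.
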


\begin{table}[hbt]
  \begin{tabular}{c|cccccc}
    $M$ & $L=\Fix(R)$ & & &&&\\ 
    \hline\rule{0pt}{3ex}
   $S^2\times S^2$ & $ S^2$ & $ T^2$ & &&&\\\rule{0pt}{3ex}
    $X_0$ & & & $ \R P^2$ &&&\\\rule{0pt}{3ex}
    $X_1$ & &  & &$ \R P^2\# \R P^2$&&\\\rule{0pt}{3ex}
    $X_2$ & & & $ \R P^2$ && $ \#_3\R P^2$&\\\rule{0pt}{3ex}
    $X_3$ & $ S^2$ & $ T^2$ & & $ \R P^2\# \R P^2$ && $ \#_4\R P^2$
  \end{tabular}

  \caption{The diffeomorphism types of connected real Lagrangians in toric symplectic del Pezzo surfaces.}
    \label{tab: delpezzo}
\end{table}
\begin{task}
	Classify connected real Lagrangians in toric symplectic del Pezzo surfaces up to Hamiltonian isotopy.
\end{task}
For fixed diffeomorphism type of the real Lagrangian, uniqueness up to Hamiltonian isotopy is known for $S^2$ in $S^2\times S^2$ and $X_3$ by \cite{HindS2S2} and \cite{Evans}, and for $\R P^2$ in $X_0$ by \cite{LiWu}. Indeed, every real Lagrangian in a monotone symplectic manifold is monotone and there are no exotic monotone submanifolds in these cases. 
\begin{question}
	Is every connected real Lagrangian with fixed diffeomorphism type in toric del Pezzo surfaces unique up to Hamiltonian isotopy?
\end{question}
Since $S^2\times S^2$ and $X_3$ admit infinitely many exotic monotone Lagrangian tori \cite{Vianna}, a positive answer to this question would crucially depend on the submanifold being real.
\section{Basic geometry}
We refer to \cite{Sja}, \cite{Audin} and \cite[Chapter 5]{McduffSalamon} for (real) symplectic geometry and Hamiltonian torus actions. 
\subsection{Basics}

Let $(M,\omega)$ be a symplectic manifold and let the $n$-torus $T^n$ act on $M$ by symplectomorphisms. We denote this action by $(t,p) \mapsto t.p$ for $p\in M$ and $t\in T^n$ and the corresponding Lie algebra by $\mathfrak{t}^n = \Lie(T^n)$. The associated infinitesimal action $\mathfrak{t}^n \rightarrow \Gamma(TM)$ is defined by
	\begin{equation}
	\xi\longmapsto X_{\xi}, \quad \left(X_{\xi}\right)_p := \left.\frac{d}{ds}\right\vert_{s=0} \exp(s\xi).p,\quad p\in M.
	\end{equation}
A symplectic $T^n$-action on a symplectic manifold $M$ is called \emph{Hamiltonian} if there exists a smooth map $\mu : M\to (\mathfrak{t}^n)^*$ such that
\begin{itemize}
	\item[1)] for each $\xi\in \mathfrak{t}^n$ we have $d\langle \mu, \xi\rangle=\iota_{X_\xi}\ow$, where $\langle\cdot,\cdot \rangle$ denotes the natural pairing between $\mathfrak{t}^n$ and $(\mathfrak{t}^n)^*$,
	\item[2)] the map $\mu$ is invariant under the $T^n$-action, i.e. $\mu(t.p)=\mu(p)$ for all $t \in T^n$ and $p \in M$.
\end{itemize}
The map $\mu$ is called a \emph{moment map} of the Hamiltonian $T^n$-action.

\begin{definition}
A triple $(M,\omega,\mu)$ is called a {\bf Hamiltonian $T^n$-space} if $(M,\ow)$ is a symplectic manifold equipped with a Hamiltonian $T^n$-action and $\mu\colon M\to (\mathfrak{t}^n)^*$ is a moment map associated to the action.
\end{definition}

\noindent
The equation $d\langle \mu , \xi \rangle = \iota_{X_{\xi}}\omega$ means that the Hamiltonian flow of $ \langle \mu , \xi \rangle \in C^{\infty}(M) $ at time $t$ corresponds to the action of $\exp(t\xi)$ on $M$. Furthermore, this equation can be used to prove the following geometric properties of the moment map
\begin{eqnarray}
	(\ker d\mu\vert_p)^{\omega} &=& T_p(T^np) ,\\
	\Ann(\im d\mu\vert_p) &=& \Lie(\Stab(p)).			\label{eq:momentstab}
\end{eqnarray}  

\noindent
A classical result by Atiyah--Guillemin--Sternberg states that the image of $\mu$ is a convex polytope in $(\mathfrak{t}^n)^*$, called the \emph{moment polytope}.

\begin{remark}
\label{rk:momentnorm}
{\rm
As for Hamiltonians in general, adding a constant vector to the moment map does not change the group action it generates. We choose the normalization $\int_M \mu \omega^n= 0 \in (\mathfrak{t}^n)^*$ for compact $M$ unless otherwise stated.
}
\end{remark}

\noindent
The \emph{standard lattice} $\mathfrak{t}^n_{\Z}$ is defined as the kernel of the exponential map $\exp:\mathfrak{t}^n \rightarrow T^n$. Furthermore, the group formed by the automorphisms of $\mathfrak{t}^n$ which preserve the standard lattice will be denoted by $\Aut_{\Z}\mathfrak{t}^n$. The dual of the standard lattice is defined by
\begin{equation*}
(\mathfrak{t}^n_{\Z})^* = \{\eta \in \mathfrak{t}^* \,\vert\, \langle \eta , x \rangle \in \Z \text{ for all } x \in \mathfrak{t}^n_{\Z} \}.
\end{equation*}
The corresponding group $\Aut_{\Z}\mathfrak({\mathfrak{t}}^n)^*$ is defined similarly. 

\begin{remark} 
\label{rk:liftingonT}
{\rm
Since $T^n \cong \mathfrak{t}^n / \mathfrak{t}^n_{\Z}$, any element $\alpha \in \Aut_{\Z}\mathfrak{t}^n$ induces a group automorphism $A$ of $T^n$. Conversely, for any group automorphism $A$ of $T^n$, its differential $A_*$ belongs to $\Aut_{\mathbb{Z}}\mathfrak{t}^n$.
}
\end{remark}

\noindent
Recall that for a given Hamiltonian $T^n$-space, one can perform symplectic reduction on certain level sets of the moment map in order to obtain a new symplectic manifold. See~\cite{Can} for details. 

\begin{proposition} \label{prop:red} Let $(M,\omega,\mu)$ be a Hamiltonian $T^n$-space and $0 \in (\mathfrak{t}^n)^*$ a regular value of $\mu$ such that $T^n$ acts freely on the corresponding level set $\mu^{-1}(0)$. Then the quotient 
\begin{equation*}
	\widehat{M} = \mu^{-1}(0)/T^n
\end{equation*}
carries a unique symplectic structure $\widehat{\omega}$ such that 
\begin{equation*}
\iota^* \omega = p^*\widehat{\omega}
\end{equation*}
where $\iota : \mu^{-1}(0) \hookrightarrow M$ is the natural inclusion and $p : \mu^{-1}(0) \twoheadrightarrow \widehat{M}$ is the natural projection. 
\end{proposition}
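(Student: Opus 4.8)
The plan is to run the standard Marsden--Weinstein reduction argument, the only nontrivial inputs being that $0$ is a regular value of $\mu$, that the $T^n$-action on $Z := \mu^{-1}(0)$ is free, and the pointwise identity $(\ker d\mu|_p)^\omega = T_p(T^np)$ recalled above. First I would observe that regularity of $0$ makes $Z$ an embedded submanifold of $M$, and that since $T^n$ is compact the free action on $Z$ is automatically proper; hence $Z \to Z/T^n = \widehat{M}$ is a principal $T^n$-bundle, so $\widehat M$ is a smooth manifold and $p$ is a surjective submersion. In particular $p^*$ is injective on differential forms, which already yields the uniqueness of $\widehat\omega$: the requirement $p^*\widehat\omega = \iota^*\omega$ determines it.

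For existence, the point is that $\iota^*\omega$ is a \emph{basic} form for the bundle $p$, that is, $T^n$-invariant and horizontal. Invariance is immediate since $\omega$ is $T^n$-invariant and $\iota$ is equivariant. For horizontality, fix $\xi \in \mathfrak{t}^n$; the fundamental vector field $X_\xi$ generating the vertical direction is tangent to $Z$, because differentiating $\mu(t.p)=\mu(p)$ gives $d\mu_p(X_\xi)=0$, i.e.\ $X_\xi \in \ker d\mu_p = T_pZ$. Then, using the moment map equation $\iota_{X_\xi}\omega = d\langle\mu,\xi\rangle$, one gets $\iota_{X_\xi}(\iota^*\omega) = \iota^*(d\langle\mu,\xi\rangle) = d(\iota^*\langle\mu,\xi\rangle) = 0$, since $\mu$ vanishes identically on $Z$. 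Therefore $\iota^*\omega$ descends to a unique $2$-form $\widehat\omega$ on $\widehat M$ with $p^*\widehat\omega = \iota^*\omega$, and closedness follows formally: $p^*(d\widehat\omega) = d(p^*\widehat\omega) = d(\iota^*\omega) = \iota^*(d\omega) = 0$, whence $d\widehat\omega = 0$ by injectivity of $p^*$.

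The one genuinely non-formal step, and the place I expect to need the most care, is nondegeneracy of $\widehat\omega$. I would fix $z \in Z$ and set $W = T_zZ = \ker d\mu|_z$ and $U = T_z(T^nz)$, the tangent space to the orbit; since the orbit lies in $Z$ we have $U \subseteq W$, and $T_{p(z)}\widehat M \cong W/U$ with $\widehat\omega_{p(z)}$ induced by $\omega_z|_W$. By the identity $(\ker d\mu|_z)^\omega = T_z(T^nz)$ we have $W^\omega = U$; in particular $\omega(U,U)\subseteq \omega(W^\omega,W)=0$, so $U$ is $\omega$-isotropic and $\omega_z|_W$ genuinely descends to $W/U$. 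Finally, if $v \in W$ satisfies $\omega_z(v,w)=0$ for all $w \in W$, then $v \in W^\omega = U$, so the class of $v$ in $W/U$ is zero; hence the induced bilinear form on $W/U$ is nondegenerate, i.e.\ $\widehat\omega$ is symplectic. This completes the argument; the only subtleties are the ``basic form $\Rightarrow$ descends to the quotient'' bookkeeping and the pointwise symplectic linear algebra of $W/U$, both of which hinge on the recalled identity $(\ker d\mu|_p)^\omega = T_p(T^np)$.
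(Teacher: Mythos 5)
Your proof is correct and complete: it is the standard Marsden--Weinstein reduction argument (uniqueness from injectivity of $p^*$ along a surjective submersion, existence because $\iota^*\omega$ is basic, closedness by naturality of $d$, and nondegeneracy from the identity $(\ker d\mu|_z)^{\omega}=T_z(T^nz)$), which is exactly what the paper has in mind --- the proposition is stated there without proof, as a recollection of the classical theorem with a reference to the literature. The only wording worth tightening is that descent of $\omega_z|_W$ to $W/U$ requires $\omega(U,W)=0$ rather than merely isotropy of $U$; your displayed computation $\omega(W^{\omega},W)=0$ already establishes precisely this, so nothing substantive is missing.
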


\noindent
The space $(\widehat{M},\widehat{\omega})$ is called \emph{symplectic quotient} or \emph{Marsden--Weinstein quotient} at the level $0$. This construction is best summarized by the reduction diagram

\begin{center}
\begin{tikzcd}
	\left(\mu^{-1}(0), \iota^*\omega = p^*\widehat{\omega} \right) \arrow[hook]{r}{\iota} \arrow[two heads]{d}{/T^n}[swap]{p}
	&  \left( M, \omega \right)  \\
	(\widehat{M}, \widehat{\omega}).
\end{tikzcd}
\end{center}

\subsection{Compatible maps}

Let $(M,\omega,\mu)$ be a Hamiltonian $T^n$-space. We will define a notion of compatibility between the torus action and a given diffeomorphism $\varphi$ of $M$ which either preserves or reverses the symplectic form, i.e. which is either symplectic or antisymplectic. In order to treat both cases simultaneously, we attach a sign $\epsilon(\varphi) \in \{-1,1\} $ to the diffeomorphism $\varphi$ such that 
\begin{equation*}
	\varphi^* \omega = \epsilon(\varphi) \omega.
\end{equation*}

\begin{proposition} 
\label{prop:compatibility}
Let $\varphi$ be a diffeomorphism of a Hamiltonian $T^n$-space $(M,\omega,\mu)$ satisfying $\varphi^* \omega = \epsilon(\varphi) \omega$ for $\epsilon(\varphi) \in \{-1,1\}$. Then the following are equivalent.
	\begin{itemize}
		\item[1)] There is a group automorphism $\tau: T^n \rightarrow T^n$ such that
			\begin{equation}
				\label{eq:comp1}
				\varphi(t.p) = \tau(t).\varphi(p), \quad p\in M,\, t \in T^n;
			\end{equation}
		\item[2)] There is a map $\sigma \in \Aut_{\Z}(\mathfrak{t}^n)^*$ such that 
			\begin{equation} 
				\label{eq:comp2}
				\mu \circ \varphi = \sigma \circ \mu;
			\end{equation}
		\item[3)] There is a map $\alpha \in \Aut_{\Z}\mathfrak{t}^n$ such that 
			\begin{equation}
				\label{eq:comp3}
				\varphi^{-1}_*(X_{\xi} \circ \varphi) = X_{\alpha(\xi)}, \quad \xi \in \mathfrak{t}^n.
			\end{equation}
	\end{itemize}
Furthermore, if the statements are true, then the above maps are related by
	\begin{equation}
		\label{eq:comprel}
		\tau^{-1}_* = \epsilon(\varphi)\sigma^* = \alpha.
	\end{equation}
\end{proposition}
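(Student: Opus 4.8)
All three conditions express the same compatibility of $\varphi$ with the torus action, seen respectively through the action itself, through the moment map, and through the generating vector fields $X_\xi$. The plan is to treat 3) as a hub and prove the two equivalences 1)~$\Leftrightarrow$~3) and 2)~$\Leftrightarrow$~3), after which the relation $(\ref{eq:comprel})$ is read off. The recurring tools are the identity $d\langle\mu,\xi\rangle=\iota_{X_\xi}\omega$, the fact that the flow of $X_\xi$ is $p\mapsto\exp(s\xi).p$, Remark~\ref{rk:liftingonT} converting $\Aut_\Z\mathfrak{t}^n$ into group automorphisms of $T^n$ and back, and the fact that transposition interchanges $\Aut_\Z\mathfrak{t}^n$ and $\Aut_\Z(\mathfrak{t}^n)^*$.

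For 1)~$\Rightarrow$~3) I would differentiate $(\ref{eq:comp1})$ with $t=\exp(s\xi)$ at $s=0$: the left side gives $d\varphi_p((X_\xi)_p)$, while $\tau\circ\exp=\exp\circ\tau_*$ turns the right side into $(X_{\tau_*\xi})_{\varphi(p)}$; since this holds for all $\xi$, it is precisely $(\ref{eq:comp3})$ with $\alpha=\tau_*^{-1}\in\Aut_\Z\mathfrak{t}^n$. Conversely, $(\ref{eq:comp3})$ says that $X_{\alpha(\xi)}$ and $X_\xi$ are $\varphi$-related, hence so are their flows, i.e. $\varphi(\exp(s\alpha(\xi)).p)=\exp(s\xi).\varphi(p)$; since $\alpha^{-1}\in\Aut_\Z\mathfrak{t}^n$, Remark~\ref{rk:liftingonT} furnishes a group automorphism $\tau$ of $T^n$ with $\tau\circ\exp=\exp\circ\alpha^{-1}$, and rewriting the flow identity with $\eta=\alpha(\xi)$, together with surjectivity of $\exp$ on $T^n$, gives $(\ref{eq:comp1})$ with $\tau_*=\alpha^{-1}$. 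In particular $\tau_*^{-1}=\alpha$ when 1) and 3) hold.

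For 2)~$\Rightarrow$~3) I would write $\langle\mu\circ\varphi,\xi\rangle=\langle\mu,\sigma^*\xi\rangle$, differentiate, and insert $\varphi^*\omega=\epsilon(\varphi)\omega$: setting $Y:=\varphi^{-1}_*(X_\xi\circ\varphi)$, the chain $\omega_{\varphi(p)}((X_\xi)_{\varphi(p)},d\varphi_p w)=\epsilon(\varphi)\,\omega_p(Y_p,w)=\omega_p((X_{\sigma^*\xi})_p,w)$ together with nondegeneracy of $\omega$ forces $Y=X_{\epsilon(\varphi)\sigma^*\xi}$, which is $(\ref{eq:comp3})$ with $\alpha=\epsilon(\varphi)\sigma^*$. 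Running the computation backwards gives $d\langle\mu\circ\varphi,\xi\rangle=\epsilon(\varphi)\,d\langle\mu,\alpha(\xi)\rangle$, hence $\mu\circ\varphi=\epsilon(\varphi)\alpha^*\circ\mu+c$ for a constant $c\in(\mathfrak{t}^n)^*$ (using that $M$ is connected); then the normalization $\int_M\mu\,\omega^n=0$ of Remark~\ref{rk:momentnorm}, together with the fact that $\varphi$ scales $\omega^n$ only by the sign $\epsilon(\varphi)^n$, gives $\int_M(\mu\circ\varphi)\,\omega^n=0$, so $c\cdot\vol(M)=0$ and $c=0$. This yields 2)~$\Leftrightarrow$~3), with $\sigma=\epsilon(\varphi)\alpha^*\in\Aut_\Z(\mathfrak{t}^n)^*$. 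Combining the equivalences, when the conditions hold we have $\alpha=\tau_*^{-1}$ and $\sigma=\epsilon(\varphi)\alpha^*$, so $\epsilon(\varphi)\sigma^*=\alpha=\tau_*^{-1}$, which is $(\ref{eq:comprel})$.

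I expect the only non-formal step to be the vanishing of the constant $c$ in the passage 3)~$\Rightarrow$~2): recovering the moment-map intertwining from the infinitesimal identity needs a genuine global input, and one must check that $\varphi$ respects the chosen normalization of $\mu$ (the orientation behaviour of $\varphi$ is irrelevant here, as we only need the relevant integral to vanish). Everything else is bookkeeping of the sign $\epsilon(\varphi)$ and of transposes while passing between the three descriptions.
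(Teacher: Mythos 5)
Your proof is correct and follows essentially the same route as the paper: condition 3) is the hub, the equivalence with 1) comes from differentiating the action of $\exp(s\xi)$, and the equivalence with 2) comes from the transformation law of Hamiltonian vector fields under (anti)symplectomorphisms, with the same sign bookkeeping. The one point where you go beyond the paper is in making explicit that passing from the vector-field identity back to $\mu\circ\varphi=\sigma\circ\mu$ only determines the moment map up to a constant, which is killed by the normalization $\int_M\mu\,\omega^n=0$ of Remark~\ref{rk:momentnorm}; the paper silently absorbs this step, so your added care is a genuine (if small) improvement.
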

\proof First suppose that $\epsilon(\varphi)=1$.\\
We will show that both $1)$ and $2)$ are equivalent to the infinitesimal condition $3)$. Since the exponential map of $T^n$ is surjective, $1)$ is equivalent to
\begin{equation*}
	\varphi(\exp s \xi . p) = \tau(\exp s \xi).\varphi(p), \quad p \in M, \, \xi \in \mathfrak{t}^n.
\end{equation*}
Differentiating with respect to $s$ and rearranging terms, we obtain
\begin{equation*}
	\varphi_*^{-1}(X_{\xi})_{\varphi(p)} = (X_{\tau^{-1}_*\xi})_p, \quad p \in M, \, \xi \in \mathfrak{t}^n.
\end{equation*}
The equivalence of $1)$ and $3)$ follows by defining, with the help of Remark \ref{rk:liftingonT}, $\alpha := \tau^{-1}_* \in \Aut_{\Z}\mathfrak{t}^n$ and conversely by defining $\tau$ as the automorphism obtained by lifting $\alpha^{-1}$ to $T^n$.

In order to prove the equivalence of $2)$ and $3)$, recall that given a Hamiltonian $H$, its vector field $X_H$ transforms under a symplectomorphism $\varphi$ to the Hamiltonian vector field $X_{H \circ \varphi} = \varphi^{-1}_*(X_H \circ \varphi)$. Since $X_{\xi}$ is the vector field corresponding to the Hamiltonian function $H=\langle \mu , \xi \rangle$, identity $\eqref{eq:comp3}$ can be rewritten as 
\begin{equation*}
	X_{\langle \mu \circ \varphi, \xi \rangle} = X_{\langle \mu , \alpha(\xi) \rangle},
\end{equation*}
\noindent
which is equivalent to 
\begin{equation*}
	\mu \circ \varphi = \alpha^* \circ \mu.
\end{equation*}

The case $\epsilon(\varphi) = -1$ can be proved similarly. The only notable difference is the fact that if $\varphi$ is antisymplectic, then the Hamiltonian vector fields transform as follows,
\begin{equation*}
	X_{H \circ \varphi} = - \varphi^{-1}_*(X_H \circ \varphi).
\end{equation*}
This accounts precisely for the additional minus sign in equation \eqref{eq:comprel}.
\proofend

\begin{remark}
\label{rk:involutioncomp}
In the antisymplectic case, we will mostly work with involutions, i.e. diffeomorphisms $R:M \rightarrow M$ satisfying $R^*\omega = -\omega$ and $R^2 = \id$. In this case, the maps $\tau$, $\alpha$ and $\sigma$ are involutions as well.
\end{remark}

\begin{definition}
\label{def:compatibility}
An (anti-)symplectic diffeomorphism $\varphi$ on a Hamiltonian $T^n$-space $(M,\omega,\mu)$ is called {\bf compatible} if one of the equivalent conditions in Proposition~\ref{prop:compatibility} holds.
\end{definition}

\begin{remark}
This compatibility condition is a special case of the notion of \emph{real Hamiltonian $G$-manifold} given in \cite{Sja}, which contains many examples. These ideas go back to Duistermaat's work \cite{Duist}, who considered the case where $\tau(t)=t^{-1}$.
\end{remark}

\noindent
In case the Hamiltonian $T^n$-space admits symplectic reduction, a given compatible (anti-)symplectic map yields an (anti-)symplectic map on the symplectic quotient.

\begin{proposition} 
\label{prop:symplecticred}
Let $\varphi$ be a compatible diffeomorphism on a Hamiltonian $T^n$-space $(M,\omega,\mu)$ satisfying $\varphi^* \omega = \epsilon(\varphi) \omega$ for $\epsilon(\varphi) \in \{-1,1\}$. Furthermore, suppose that $M$ admits symplectic reduction at the level $0 \in (\mathfrak{t}^n)^*$. Then $\varphi$ induces a diffeomorphism $\widehat{\varphi} : \widehat{M} \rightarrow \widehat{M}$ on the symplectic quotient satisfying $\widehat{\varphi}^* \widehat{\omega} = \epsilon(\varphi) \widehat{\omega}$ such that the following diagram commutes,

\begin{center}
\begin{tikzcd}
	\mu^{-1}(0)  \arrow[loop above]{}{\varphi\vert_{\mu^{-1}(0)}} \arrow[hook]{r}{\iota} \arrow[two heads]{d}{/T^n}[swap]{p}
	&   M  \arrow[loop above]{}{\varphi} \\
	\widehat{M} \arrow[loop left]{}{\widehat{\varphi}}
\end{tikzcd}
\end{center}

\end{proposition}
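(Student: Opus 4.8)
The plan is to verify, in turn, that $\varphi$ preserves the level set $\mu^{-1}(0)$, that it descends to a well-defined map of the quotient, that the descended map is a diffeomorphism, and that it scales $\widehat{\omega}$ by the sign $\epsilon(\varphi)$.

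First I would invoke condition~2) of Proposition~\ref{prop:compatibility}: compatibility provides $\sigma \in \Aut_{\Z}(\mathfrak{t}^n)^*$ with $\mu\circ\varphi = \sigma\circ\mu$. Since $\sigma$ is linear it fixes $0 \in (\mathfrak{t}^n)^*$, so $\mu(\varphi(p)) = \sigma(\mu(p)) = 0$ whenever $\mu(p)=0$; thus $\varphi$ restricts to a self-map of $\mu^{-1}(0)$. Because $\varphi^{-1}$ is again compatible (with automorphism $\sigma^{-1}$, and still satisfies $(\varphi^{-1})^*\omega=\epsilon(\varphi)\omega$ as $\epsilon(\varphi)=\pm1$), the same argument applied to $\varphi^{-1}$ shows $\varphi\vert_{\mu^{-1}(0)}$ is a diffeomorphism of $\mu^{-1}(0)$.

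Next, I would use condition~1) of Proposition~\ref{prop:compatibility}, $\varphi(t.p)=\tau(t).\varphi(p)$, which shows that $\varphi$ carries the $T^n$-orbit through $p$ onto the $T^n$-orbit through $\varphi(p)$. Hence $[p]\mapsto[\varphi(p)]$ defines a map $\widehat{\varphi}\colon\widehat{M}\to\widehat{M}$ satisfying $\widehat{\varphi}\circ p = p\circ\varphi\vert_{\mu^{-1}(0)}$, which is exactly the asserted commutativity of the diagram. Since symplectic reduction is assumed at level $0$, the $T^n$-action on $\mu^{-1}(0)$ is free and $p$ is a surjective submersion (a principal $T^n$-bundle), so the smooth map $p\circ\varphi\vert_{\mu^{-1}(0)}$, being $T^n$-invariant, descends to a smooth map $\widehat{\varphi}$; running the same construction for $\varphi^{-1}$ produces a smooth two-sided inverse, so $\widehat{\varphi}$ is a diffeomorphism.

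Finally, to get $\widehat{\varphi}^*\widehat{\omega} = \epsilon(\varphi)\widehat{\omega}$ I would pull everything back to $\mu^{-1}(0)$ and use the reduction identity $\iota^*\omega = p^*\widehat{\omega}$ from Proposition~\ref{prop:red} together with $\iota\circ\varphi\vert_{\mu^{-1}(0)} = \varphi\circ\iota$ and $\varphi^*\omega = \epsilon(\varphi)\omega$:
\begin{align*}
p^*\!\left(\widehat{\varphi}^*\widehat{\omega}\right) &= \left(\varphi\vert_{\mu^{-1}(0)}\right)^*\!\left(p^*\widehat{\omega}\right) = \left(\varphi\vert_{\mu^{-1}(0)}\right)^*\!\left(\iota^*\omega\right) \\
&= \iota^*\!\left(\varphi^*\omega\right) = \epsilon(\varphi)\,\iota^*\omega = p^*\!\left(\epsilon(\varphi)\widehat{\omega}\right).
\end{align*}
Since $p$ is a surjective submersion, $p^*$ is injective on differential forms, and the claim follows. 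The whole argument is essentially diagram-chasing and bookkeeping; the one point worth flagging is that well-definedness of $\widehat{\varphi}$ genuinely uses the compatibility hypothesis (condition~1)) and not merely that $\varphi$ is (anti)symplectic or that it preserves $\mu^{-1}(0)$ — that is the only place where the construction could fail for a general $\varphi$, and there is no deeper obstacle.
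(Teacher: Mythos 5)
Your proposal is correct and follows essentially the same route as the paper: invariance of $\mu^{-1}(0)$ via condition~2) of Proposition~\ref{prop:compatibility}, descent to the quotient via condition~1), and the pullback computation through $p^*\widehat{\omega}=\iota^*\omega$ combined with injectivity of $p^*$ for the surjective submersion $p$. The only difference is that you spell out why $\widehat{\varphi}$ is a diffeomorphism (by running the construction for $\varphi^{-1}$), which the paper leaves implicit.
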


\proof 

The diffeomorphism $\varphi$ preserves the level set $\mu^{-1}(0)$, as can be read off from \eqref{eq:comp2}. Furthermore, the restriction $\varphi\vert_{\mu^{-1}(0)}$ descends to $\widehat{M}$ by equation \eqref{eq:comp1} to yield a diffeomorphism $\widehat{\varphi}$. Since $\widehat{\omega}$ is defined by $\iota^*\omega = p^* \widehat{\omega}$, we can compute
\begin{eqnarray*}
p^* \widehat{\varphi}^* \widehat{\omega} 
&=& \varphi\vert_{\mu^{-1}(0)}^* p^* \widehat{\omega} \\
&=& \varphi\vert_{\mu^{-1}(0)}^* \iota^* \omega \\
&=& \iota^* \varphi^* \omega \\
&=& \iota^* \left( \epsilon(\varphi) \omega \right) \\
&=& p^* \left( \epsilon(\varphi) \widehat{\omega} \right). 
\end{eqnarray*}
Since $p$ is a surjective submersion, this implies that $\widehat{\varphi}^* \widehat{\omega} = \epsilon(\varphi)\widehat{\omega}$. 
\proofend

\section{Toric symplectic manifolds}

We refer to \cite{Audin}, \cite{Can}, \cite{Del}, \cite{Gui2} or \cite{Mcduffmono} for details on toric symplectic manifolds and the Delzant construction. 

\subsection{Basics}

Toric symplectic manifolds are a special case of Hamiltonian $T^n$-spaces. 

\begin{definition}
A Hamiltonian $T^n$-space $(M,\omega,\mu)$ is called {\bf toric} if $n = \frac{1}{2} \dim M $ and the action is effective.
\end{definition}

\noindent
In the case of toric symplectic manifolds the moment map is a quotient map for the torus action, and our choice of normalization in Remark~\ref{rk:momentnorm} implies that the barycentre of its moment polytope $\Delta$ lies at $0 \in (\mathfrak{t}^n)^*$. Furthermore, by a classical result of Delzant, $\Delta = \mu(M)$ takes a particular form and is, in fact, a sufficient datum to reconstruct $(M,\omega,\mu)$ along with its $T^n$-action up to equivariant symplectomorphisms. We will recall Delzant's result and some of the facts surrounding it, since these will be used later on.\\

\noindent 
Let $\Delta \subset (\mathfrak{t}^n)^* $ be a rational polytope with respect to the standard lattice $(\mathfrak{t}^n_{\Z})^*$ bounded by $k$ hyperplanes. A lattice vector $v \in \mathfrak{t}_{\Z}^n$ is called \emph{primitive} if it cannot be written as a non-trivial integer multiple of another lattice vector. Equivalently, a primitive vector is the first intersection of the line it spans with the standard lattice. We can describe $\Delta$ in terms of primitive vectors $v_i \in \mathfrak{t}_{\Z}^n$ and a set of numbers $\kappa_i \in \R$,
\begin{equation}
	\label{eq:polytopedescription}
	\Delta = \{ \eta \in (\mathfrak{t}^n)^* \, \vert \, \langle \eta , v_i \rangle \leq \kappa_i \}. 
\end{equation} 

\noindent 
After identifying $\mathfrak{t}^n$ and $(\mathfrak{t}^n)^*$ with $\R^n$ by the choice of a basis, the vectors $v_i$ correspond to outward pointing primitive normal vectors to the facets. The constants $\kappa_i$ measure the affine distance of the facets to the origin. Details can be found in~\cite{Mcduffmono}.

\subsection{The Delzant construction}\label{sec: delzantconst}

\begin{definition} A rational polytope $\Delta \subset (\mathfrak{t}^n)^*$ is called {\bf Delzant} if each of its vertices is formed by the intersection of $n$ hyperplanes whose primitive normal vectors form a $\Z$-basis of $\mathfrak{t}_{\Z}^n$.
\end{definition}

\begin{remark}
\label{rk:alternativedelzant}
If we identify $\mathfrak{t}^n_{\Z}$ with $\Z^n$, the Delzant condition on polytopes is equivalent to requiring that the set of primitive normal vectors at any given vertex can be mapped to the standard basis $\{e_1,...,e_n\} \subset \Z^n$  by an element of $\GL(n,\Z)$.
\end{remark}

\begin{theorem}[\cite{Del}] \label{thm:delzant}
The moment polytope of a toric symplectic manifold is Delzant and there is a bijective correspondence between Delzant polytopes up to $\Aut_{\Z} (\mathfrak{t}^n)^* $-action and toric symplectic manifolds up to $T^n$-equivariant symplectomorphisms. 
\end{theorem}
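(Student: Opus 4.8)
The plan is to establish the three assertions of the statement in turn: that the moment polytope of a toric manifold is Delzant, that every Delzant polytope is realized (existence), and that the polytope determines the manifold up to $T^n$-equivariant symplectomorphism (uniqueness). The two technical engines throughout are the equivariant local normal form theorem describing the torus action near an orbit, and symplectic reduction as recorded in Proposition~\ref{prop:red}; the role of $\Aut_{\Z}(\mathfrak{t}^n)^*$ is simply to absorb the freedom in identifying $\mathfrak{t}^n$ with $\R^n$. For Delzant-ness I would work near a fixed point $p$ of the action, which exists by compactness and maps to a vertex of $\Delta$. The slice theorem linearizes the action, so $T_pM$ splits into $n$ two-dimensional weight spaces with weights $\alpha_1,\dots,\alpha_n\in(\mathfrak{t}^n_{\Z})^*$, and effectiveness is equivalent to the statement that the integer matrix of weights lies in $\glnz$, i.e.\ that the $\alpha_i$ form a $\Z$-basis of the lattice. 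In suitable Darboux coordinates the moment map reads $\mu(z)=\mu(p)+\tfrac12\sum_i|z_i|^2\alpha_i$, so the image near the vertex is the cone on the $\alpha_i$; since the edge directions $\alpha_i$ and the primitive facet normals $v_j$ meeting at the vertex are dual bases, the lattice-basis property of the weights is exactly the Delzant condition there.

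For the existence half, given a Delzant polytope $\Delta=\{\eta\mid\langle\eta,v_i\rangle\le\kappa_i\}$ with $k$ facets I would send the standard basis of $\R^k$ to the primitive normals $v_i$, producing a map $\beta\colon\R^k\to\mathfrak{t}^n$ whose induced torus homomorphism $\pi\colon T^k\to T^n$ is surjective (the $v_i$ span the lattice at any vertex) with kernel a subtorus $K$ of Lie algebra $\mathfrak{k}=\ker\beta$. Equipping $\C^k$ with its standard toric structure and moment map, I would restrict to the $K$-moment map $\nu\colon\C^k\to\mathfrak{k}^*$ obtained by dualizing $\mathfrak{k}\hookrightarrow\mathfrak{t}^k$, with a constant chosen from the $\kappa_i$, and form $M_\Delta=\nu^{-1}(0)/K$. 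The Delzant condition is precisely what guarantees that $0$ is a regular value and that $K$ acts freely on $\nu^{-1}(0)$, so Proposition~\ref{prop:red} makes $M_\Delta$ a symplectic manifold; the residual $T^n=T^k/K$-action is then toric, and a direct computation of its moment map identifies the image with $\Delta$.

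The substantive part is uniqueness: that an arbitrary toric $(M,\omega,\mu)$ with polytope $\Delta$ is equivariantly symplectomorphic to the model $M_\Delta$. I would build the symplectomorphism stratum by stratum. Over the interior $\mathring\Delta$ the action--angle (Arnold--Liouville) theorem identifies $\mu^{-1}(\mathring\Delta)$ with $T^n\times\mathring\Delta$ carrying the canonical form, with no monodromy or twist since $\mathring\Delta$ is contractible and the period lattice is fixed by the $T^n$-action; the same holds for $M_\Delta$, giving a canonical equivariant identification over the open dense stratum. Near the relative interior of a face lying on $d$ facets, the slice theorem shows the local model is $\C^d$ times a trivial $T^{n-d}$-bundle, a model determined entirely by the primitive normals of those facets, so $M$ and $M_\Delta$ agree equivariantly on a neighborhood of every stratum. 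I expect the main obstacle to be gluing these local identifications into one global equivariant symplectomorphism: this I would handle with an equivariant Moser argument, interpolating $\omega_t=(1-t)\omega_0+t\omega_1$ between the two pulled-back forms (which share the moment map $\mu$), writing $\omega_1-\omega_0=d\sigma$ with an equivariant primitive chosen to vanish to the needed order along the strata, and integrating the resulting time-dependent vector field while checking that $T^n$-equivariance is preserved at each step.

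Finally, for the bijectivity bookkeeping, tracking how the construction of $M_\Delta$ transforms under a change of lattice identification shows that replacing $\Delta$ by $\Phi(\Delta)$ with $\Phi\in\Aut_{\Z}(\mathfrak{t}^n)^*$ yields an equivariantly symplectomorphic manifold, so the assignment descends to the quotient by the $\Aut_{\Z}(\mathfrak{t}^n)^*$-action; conversely, an equivariant symplectomorphism intertwines the actions through an automorphism of $T^n$, and the compatibility relation of Proposition~\ref{prop:compatibility} then forces the two moment polytopes to differ by the corresponding element of $\Aut_{\Z}(\mathfrak{t}^n)^*$. Together with existence this yields the claimed bijection.
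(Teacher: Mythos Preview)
The paper does not prove this theorem. Theorem~\ref{thm:delzant} is stated with the citation~\cite{Del} and immediately followed by ``Furthermore, Delzant gave an explicit construction\ldots'', after which the paper \emph{recalls} the symplectic-quotient construction $M=\nu^{-1}(0)/K$ and the diagram~\eqref{eq:toricdiag} defining $\mu$, explicitly deferring all verifications (that $0$ is regular, that $K$ acts freely, that $\mu(M)=\Delta$, and the entire uniqueness direction) to~\cite{Del,Can,Gui2}. So there is no ``paper's own proof'' to compare against; the paper treats Delzant's theorem as a black box and only records the construction because later sections (\S\ref{sec: lifting}--\S\ref{sec: realdelconst}) manipulate the maps $\pi,\nu,\nu_0$ directly.

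Your sketch is a faithful outline of the standard proof one finds in the cited references. The existence paragraph matches exactly what the paper recalls (your $\beta$ is the paper's characteristic map $\pi$ of~\eqref{eq:charmap}). The Delzant-ness argument via the local weight decomposition at a fixed point, and the uniqueness argument via action--angle coordinates on $\mu^{-1}(\mathring\Delta)$ plus local normal forms along faces and an equivariant Moser isotopy, are the usual ingredients; the paper neither carries these out nor claims to. One small caution on your uniqueness step: the gluing is more delicate than a plain Moser argument, since you must first produce a single global equivariant \emph{diffeomorphism} between $M$ and $M_\Delta$ intertwining the moment maps before you can run Moser; the standard route is a sheaf/\v{C}ech argument (as in Lerman--Tolman or Karshon--Lerman) or Delzant's original inductive approach, rather than patching local symplectomorphisms directly. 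But this is a refinement of a correct strategy, not a gap in approach.
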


\noindent
Furthermore, Delzant gave an explicit construction of the toric symplectic manifold $(M,\omega,\mu)$, starting from a given Delzant polytope $\Delta  = \{ \eta \in (\mathfrak{t}^n)^* \, \vert \, \langle \eta , v_i \rangle \leq \kappa_i \} $ such that $\mu(M)=\Delta$. The desired manifold $M$ is obtained as a symplectic quotient of $(\C^k,\omega_0)$. Since we will heavily rely on the details of this construction, it will be recalled here. Details can be found in the original paper~\cite{Del}, or in~\cite{Can} and~\cite{Gui2}.\\

Let $\Delta \subset (\mathfrak{t}^n)^*$ be a Delzant polytope. Up to a translation, we can assume that the normalization convention from Remark~\ref{rk:momentnorm} holds. Via the description $(\ref{eq:polytopedescription})$, the polytope $\Delta$ uniquely defines a set of pairs $\{(v_i,\kappa_i)\}_{i\in \{1,...,k\}}$. The {\bf characteristic map} associated to $\Delta$ is defined as 
\begin{equation}
	\label{eq:charmap}
	\pi : \mathfrak{t}^k \rightarrow \mathfrak{t}^n, \quad \pi(e_i) = v_i,
\end{equation}
where $e_i$ denotes the $i$-th standard basis vector of $\mathfrak{t}^k \cong \R^k$. The characteristic map is thus a linear map of full rank $n$. Furthermore, it maps $\mathfrak{t}_{\Z}^k$ to $\mathfrak{t}_{\Z}^n$, since the vectors $v_i$ are integral. Hence it descends to the respective tori to yield a map $T^k \rightarrow T^n$, which we again denote by $\pi$. Let $K = \ker \pi \subset T^k$ and denote by $\mathfrak{k}$ and $\mathfrak{k}^*$ its Lie algebra and its dual Lie algebra. We get three short exact sequences,
\begin{alignat}{7} 
\label{eq:ses}
1 	&\rightarrow 
&& \;\; K 
&& \stackrel{j}{\hookrightarrow}  
&& \;\; T^k 
&& \stackrel{\pi}{\rightarrow} 
&& \; T^n 
&& \rightarrow 1,  \nonumber  \\
0 	&\rightarrow 
&& \;\; \mathfrak{k} 
&& \stackrel{j_*}{\rightarrow} 
&& \;\; \mathfrak{t}^k 
&& \stackrel{\pi}{\rightarrow} 
&& \;\; \mathfrak{t}^n 
&& \rightarrow 0,  \nonumber  \\
0 	&\rightarrow 
&& (\mathfrak{t}^n)^* 
&& \stackrel{\pi^*}{\rightarrow} 
&& (\mathfrak{t}^k)^* 
&& \stackrel{j^*}{\rightarrow} 
&& \;\; \mathfrak{k}^* 
&& \rightarrow 0.
\end{alignat}

\noindent
The desired toric manifold $(M,\omega)$ arises as a symplectic quotient of $(\C^k,\omega_0)$ as follows. The moment map

\begin{equation*}
\nu_0 : \C^k \rightarrow (\mathfrak{t}^k)^*\cong \R^k, \quad (z_1,\ldots,z_k) \mapsto \frac{1}{2}\left(\vert z_1 \vert^2,\ldots, \vert z_k \vert^2 \right) - (\kappa_1,\ldots,\kappa_k)
\end{equation*}

\noindent 
generates the standard $T^k$-action on $\C^k$. The inclusion $j : K \hookrightarrow T^k$ induces a $K$-action on $\C^k$. The moment map corresponding to this $K$-action is given by

\begin{equation}\label{eq: numap}
\nu : \C^k \rightarrow \mathfrak{k}^*, \quad \nu = j^* \circ \nu_0.
\end{equation}

\noindent
One can show that $0 \in \mathfrak{k}^*$ is a regular value of $\nu$ and that $K$ acts freely on $\nu^{-1}(0)$. Thus the conditions for symplectic reduction are satisfied. One can show that the symplectic quotient $\nu^{-1}(0)/K$ with its induced symplectic form is the desired toric manifold $(M,\omega)$. We will briefly describe how the moment map $\mu : M \rightarrow (\mathfrak{t}^n)^*$ defining the toric structure on $M$ is obtained. Combine the symplectic reduction diagram defining $M$ 

\begin{equation} \label{eq:delzantred}
	\begin{tikzcd}
		M 
		& \nu^{-1}(0) \arrow[two heads]{l}[swap]{p}{/K} \arrow[hook]{r}{\iota}
		& \C^k
	\end{tikzcd}
\end{equation}
with the short exact sequence from \eqref{eq:ses} to obtain the commutative diagram 

\begin{equation} \label{eq:toricdiag}
	\begin{tikzcd}
		M 							\arrow[dashed]{dr}[swap]{\mu}
		& \nu^{-1}(0) 				\arrow[two heads]{l}[swap]{p}{/K} \arrow[hook]{r}{\iota}		\arrow[dashed]{d}{\overline{\mu}}
		& \C^k  					\arrow{d}{\nu_0} 												\arrow{dr}{\nu}  \\
		0 							\arrow{r}
		& (\mathfrak{t}^n)^* 			\arrow{r}{\pi^*}
		& (\mathfrak{t}^k)^* 		\arrow{r}{j^*}
		& \mathfrak{k}^*			\arrow{r}
		& 0.
	\end{tikzcd}
\end{equation}

\noindent
The map~$\mu$ is the desired moment map. We will show that both $\mu$ and $\overline{\mu}$ are well-defined maps. Since~$\nu$ is defined as~$j^* \circ \nu_0$, the composition $\nu_0 \circ \iota$ maps $\nu^{-1}(0)$ to the kernel of~$j^*$ and thus, by exactness of the lower row, to the image of~$\pi^*$. Since $\pi^*$ is injective, we obtain a unique map $\overline{\mu}: \nu^{-1}(0) \rightarrow (\mathfrak{t}^n)^*$ with

\begin{equation*}
	\pi^* \circ \overline{\mu} = \nu_0 \circ \iota.
\end{equation*}

\noindent
Since $\iota$ is $K$-equivariant and $\nu_0$ is $T^k$-invariant and therefore in particular $K$-invariant, we obtain that $\overline{\mu}$ is $K$-invariant. Since $\nu^{-1}(0)$ is a $K$-principal bundle with base $M$, this implies that $\overline{\mu}$ factors through $M$ to yield the desired moment map $\mu$ defined by the equation

\begin{equation*}
	\pi^* \circ \mu \circ p = \nu_0 \circ \iota.
\end{equation*}

\begin{example}
\label{ex:s2delzant}
{\rm
Let $n=1$ and take the Delzant polytope $ [-1,1] \subset \R \cong (\mathfrak{t}^1)^*$. Then $k=2$, the outward pointing normal vectors are given by $v_1= (1), v_2=(-1)$, and the corresponding constants are $\kappa_1 = \kappa_2 = 1$. The characteristic map is $\pi = (1,-1)$ and furthermore
\begin{equation*}
	\nu_0 : \C^2 \rightarrow (\mathfrak{t}^2)^*\cong \R^2, \quad (z_1,z_2) \mapsto \left(\frac{1}{2}\vert z_1 \vert^2 - 1 , \frac{1}{2} \vert z_2 \vert^2 -1 \right).
\end{equation*}
Since $K = \ker \pi = \langle (1,1) \rangle$, the map $j^*$ is given by projection to the vector $(1,1)$ and hence
\begin{equation*}
	\nu(z_1,z_2) = \frac{1}{2}\left( \vert z_1 \vert^2 + \vert z_2 \vert^2 \right) - 2.
\end{equation*}
Therefore, the level set $\nu^{-1}(0)$ is a $3$-sphere on which $K \cong \S^1$ acts diagonally. Hence we obtain the Hopf fibration and the quotient is $M \cong \C P^1 \cong \S^2$ with the $K$-equivalence classes $[(z_1,z_2)]_K$ corresponding to the homogeneous coordinates $[z_1 : z_2]$ on $\C P^1$.
}
\end{example}

\section{Lifting symmetries of the moment polytope}
\label{sec: lifting}

Throughout this section, let $(M,\omega)$ be a toric symplectic manifold with moment map $\mu$ and moment polytope $\Delta = \mu(M) \subset (\mathfrak{t}^n)^*$ with normalization $\int_M \mu \omega^n = 0 \in (\mathfrak{t}^n)^*$.

\subsection{Symmetries of the moment polytope \nopunct}
\label{ssec:symm}

\begin{definition}
\label{def: symmetry}
Let $(M,\omega,\mu)$ be a toric symplectic manifold with moment polytope $\Delta$. The group 
	\begin{equation}
		\mathcal{S}_{\Delta}=\{ \sigma \in \Aut_{\Z} (\mathfrak{t}^n)^* \mid \sigma(\Delta)=\Delta  \}
	\end{equation}
is called the {\bf symmetries of $\Delta$}.
\end{definition}

\begin{example}
\label{ex:symmetry}
As discussed in the introduction, there are five toric symplectic del Pezzo surfaces, namely $S^2 \times S^2$ and the blow-ups $X_0,X_1,X_2,X_3$ of $\C P^2$. Their moment polytopes and the corresponding groups $\mathcal{S}_{\Delta}$ are given in Figure~\ref{fig:delpezzo}, where $D_n$ denotes the dihedral group of order $2n$. These groups are readily found by noting that elements of $\GL(2,\Z)$ preserve the affine length of edges. For example after identfying $\Aut_{\Z} (\mathfrak{t}^2)^*$ with $\GL(2,\Z)$, the subgroup $\mathcal{S}_{\Delta_{X_0}} \cong D_3$ is generated by the matrices 
	\begin{equation*}
		\begin{pmatrix}
		-1 & -1 \\
		1 & 0
		\end{pmatrix} 
		\text{ and }
		\begin{pmatrix}
		0 & 1 \\
		1 & 0
		\end{pmatrix}.
	\end{equation*}
\begin{figure}[h]
\begin{subfigure}{0.3\textwidth}
   \centering
\begin{tikzpicture}[scale=0.6]
\draw [thick,fill=blue!10] (-1,-1)--(-1,1)--(1,1)--(1,-1)--(-1,-1);
\draw[step=1.0,black!15,thin] (-2.5,-2.5) grid (2.5,2.5);
\draw [thick] (-1,-1)--(-1,1)--(1,1)--(1,-1)--(-1,-1);
\draw [->] (-2,0)--(2,0);
\draw [->] (0,-2)--(0,2);
\node at (0,1.4)[right]{$S^2 \times S^2$};
\end{tikzpicture}
  \caption{$\mathcal{S}_{\Delta_{S^2\times S^2}} \cong D_4$}
  \label{fig: center}
\end{subfigure}
\begin{subfigure}{0.3\textwidth}
   \centering
\begin{tikzpicture}[scale=0.6]
\draw [thick,fill=blue!10] (-1,-1)--(-1,2)--(2,-1)--(-1,-1);
\draw[step=1.0,black!15,thin] (-2.5,-2.5) grid (2.5,2.5);
\draw [thick] (-1,-1)--(-1,2)--(2,-1)--(-1,-1);
\draw [->] (-2,0)--(2,0);
\draw [->] (0,-2)--(0,2);
\node at (0,1.3)[right]{$X_0$};
\end{tikzpicture}
  \caption{$\mathcal{S}_{\Delta_{X_0}} \cong D_3$}
  \label{fig: center}
\end{subfigure}
\begin{subfigure}{0.3\textwidth}
   \centering
\begin{tikzpicture}[scale=0.6]
\draw [thick,fill=blue!10] (-1,0)--(-1,2)--(2,-1)--(0,-1)--(-1,0);
\draw[step=1.0,black!15,thin] (-2.5,-2.5) grid (2.5,2.5);
\draw [thick] (-1,0)--(-1,2)--(2,-1)--(0,-1)--(-1,0);
\draw [->] (-2,0)--(2,0);
\draw [->] (0,-2)--(0,2);
\node at (0,1.3)[right]{$X_1$};
\end{tikzpicture}
  \caption{$\mathcal{S}_{\Delta_{X_1}} \cong \Z_2$}
  \label{fig: center}
\end{subfigure}

\begin{subfigure}{0.3\textwidth}
   \centering
\begin{tikzpicture}[scale=0.6]
\draw [thick,fill=blue!10] (-1,-1)--(-1,1)--(0,1)--(1,0)--(1,-1)--(-1,-1);
\draw[step=1.0,black!15,thin] (-2.5,-2.5) grid (2.5,2.5);
\draw [thick] (-1,-1)--(-1,1)--(0,1)--(1,0)--(1,-1)--(-1,-1);
\draw [->] (-2,0)--(2,0);
\draw [->] (0,-2)--(0,2);
\node at (0,1.3)[right]{$X_2$};
\end{tikzpicture}
  \caption{$\mathcal{S}_{\Delta_{X_2}} \cong \Z_2$}
  \label{fig: center}
\end{subfigure}
\begin{subfigure}{0.3\textwidth}
   \centering
\begin{tikzpicture}[scale=0.6]
\draw [thick,fill=blue!10] (-1,0)--(-1,1)--(0,1)--(1,0)--(1,-1)--(0,-1)--(-1,0);
\draw[step=1.0,black!15,thin] (-2.5,-2.5) grid (2.5,2.5);
\draw [thick] (-1,0)--(-1,1)--(0,1)--(1,0)--(1,-1)--(0,-1)--(-1,0);
\draw [->] (-2,0)--(2,0);
\draw [->] (0,-2)--(0,2);
\node at (0,1.3)[right]{$X_3$};
\end{tikzpicture}
  \caption{$\mathcal{S}_{\Delta_{X_3}} \cong D_6$}
  \label{fig: center}
\end{subfigure}

\caption{Moment polytope $\Delta$ and $\mathcal{S}_{\Delta}$ for toric symplectic del Pezzo surfaces.}
 \label{fig:delpezzo}
\end{figure}
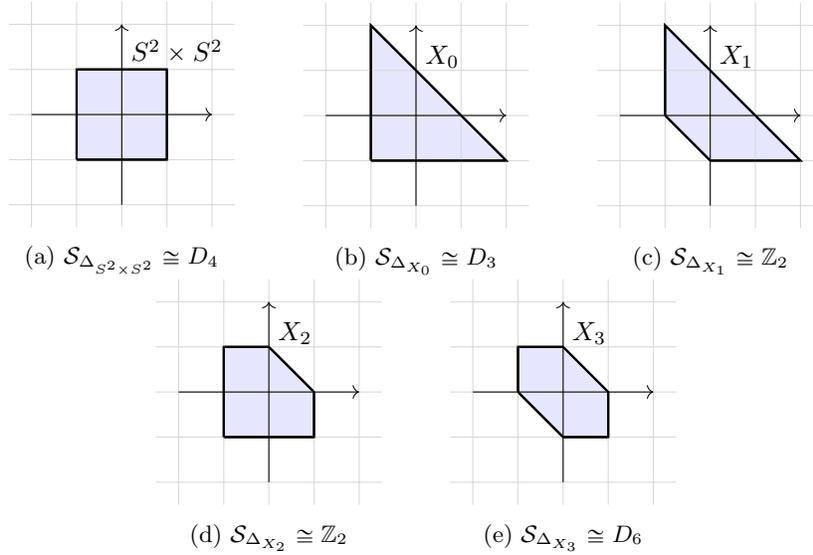
\end{example}
Let $\sigma \in \mathcal{S}_{\Delta}$. Recall from~\eqref{eq:polytopedescription} that we can associate a unique pair $(v_i,\kappa_i)$ to each facet of $\Delta$ such that 
\begin{equation*}
	\Delta = \{ \eta \in (\mathfrak{t}^n)^* \, \vert \, \langle \eta , v_i \rangle \leq \kappa_i \}.
\end{equation*}
\noindent
Applying any $\sigma \in \Aut (\mathfrak{t}^n)^*$ to $\Delta$ yields
	\begin{equation*}
		\sigma(\Delta) = \{ \eta \in (\mathfrak{t}^n)^* \, \vert \,  \langle \eta , (\sigma^{-1})^* v_i \rangle \leq \kappa_i \}.
	\end{equation*}
\noindent
Hence, applying $\sigma$ to the moment polytope amounts to applying $(\sigma^{-1})^* \in \Aut_{\Z}\mathfrak{t}^n$ to its associated normal vectors $v_i$. The hypothesis 
	\begin{equation*}
		\sigma(\Delta) = \Delta
	\end{equation*}
along with the uniqueness of the set of pairs $\{(v_i,\kappa_i)\}_{1\leq i \leq k}$ thus implies that $(\sigma^{-1})^*$ permutes normal vectors. In conclusion, there is a permutation on $k$ elements $\tau \in S_k$ such that
	\begin{eqnarray}
		\label{eq:tau}
		(\sigma^{-1})^*v_i 	&=& v_{\tau(i)},   \\
		\kappa_{i} 			&=& \kappa_{\tau(i)}.
	\end{eqnarray}

\subsection{Lifted symplectomorphisms} 
\label{ssec:liftsymp}

Let $\varphi$ be a compatible symplectomorphism of $M$. In order to clarify notation, the corresponding homomorphism on $T^n$ will be denoted by $\varphi_{T^n}$ from now on. It follows from Proposition~\ref{prop:compatibility} that $\varphi$ descends to a map on $\Delta$. In the following, we will be concerned with proving the opposite direction, namely that symmetries of $\Delta$ can be lifted to symplectomorphisms of $M$.

\begin{lemma}
\label{lem:liftsymp}
Let $\sigma \in \mathcal{S}_{\Delta}$ be a symmetry of the moment polytope of $M$. Then $\sigma$ lifts to a compatible symplectomorphism $\varphi^{\sigma} \in \Symp(M,\omega)$ with 
	\begin{equation}
	\label{eq:phisigmacomp}
		\mu \circ \varphi^{\sigma} = \sigma \circ \mu.
	\end{equation}
\end{lemma}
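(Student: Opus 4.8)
The plan is to build $\varphi^\sigma$ explicitly via the Delzant construction, pushing $\sigma$ down through the symplectic reduction $M \cong \nu^{-1}(0)/K$ of $(\C^k,\omega_0)$. The starting observation is the combinatorial data extracted in Section~\ref{ssec:symm}: the symmetry $\sigma \in \mathcal{S}_\Delta$ induces a permutation $\tau \in S_k$ of the facets with $(\sigma^{-1})^* v_i = v_{\tau(i)}$ and $\kappa_i = \kappa_{\tau(i)}$. I would first promote $\tau$ to a linear automorphism of $\C^k$, namely the coordinate permutation
\begin{equation*}
	\Phi^\sigma \colon \C^k \to \C^k, \quad \Phi^\sigma(z_1,\dots,z_k) = (z_{\tau^{-1}(1)},\dots,z_{\tau^{-1}(k)}).
\end{equation*}
This is a linear symplectomorphism of $(\C^k,\omega_0)$, and because it permutes coordinates it is equivariant for the standard $T^k$-action with respect to the torus automorphism given by the same permutation of factors; I will call the latter $\Phi^\sigma_{T^k}$. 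The condition $\kappa_i = \kappa_{\tau(i)}$ is exactly what is needed for $\Phi^\sigma$ to intertwine the $T^k$-moment map $\nu_0$ with the linear dual-permutation map $P_\tau$ on $(\mathfrak{t}^k)^* \cong \R^k$, i.e. $\nu_0 \circ \Phi^\sigma = P_\tau \circ \nu_0$.

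The second step is to check that $\Phi^\sigma$ is compatible with the symplectic reduction, i.e. that it descends to $M = \nu^{-1}(0)/K$. Here I must verify two things. First, $\Phi^\sigma$ preserves the subgroup $K = \ker\pi \subset T^k$: since $\pi(e_i) = v_i$ and $(\sigma^{-1})^* v_i = v_{\tau(i)}$, one gets $\pi \circ \Phi^\sigma_{T^k} = (\sigma^{-1})^*_{T^n} \circ \pi$ at the level of the characteristic maps, so $\Phi^\sigma_{T^k}$ carries $K$ to itself (the group automorphism $(\sigma^{-1})^*$ of $T^n$ is well-defined because $\sigma \in \Aut_\Z(\mathfrak{t}^n)^*$). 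Consequently $\Phi^\sigma$ is $K$-equivariant with respect to the restriction of $\Phi^\sigma_{T^k}$ to $K$. Second, $\Phi^\sigma$ preserves the level set $\nu^{-1}(0)$: since $\nu = j^* \circ \nu_0$ and $j^* \circ P_\tau = (\text{the corresponding automorphism of } \mathfrak{k}^*) \circ j^*$ — which follows from $\Phi^\sigma_{T^k}$ preserving $K$ — we get that $\nu \circ \Phi^\sigma$ vanishes wherever $\nu$ does. These two facts together imply that $\Phi^\sigma$ restricts to $\nu^{-1}(0)$ and descends to a diffeomorphism $\varphi^\sigma$ of $M$, which is symplectic by the uniqueness part of the reduction (Proposition~\ref{prop:red}), or equivalently by the same computation as in Proposition~\ref{prop:symplecticred} with $\epsilon = 1$.

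The third step is the compatibility relation \eqref{eq:phisigmacomp}. Chasing the commutative diagram \eqref{eq:toricdiag}, the moment map $\mu$ on $M$ is characterized by $\pi^* \circ \mu \circ p = \nu_0 \circ \iota$. Applying $\Phi^\sigma$ and using $\nu_0 \circ \Phi^\sigma = P_\tau \circ \nu_0$ together with the intertwining identity $\pi^* \circ \sigma = P_\tau \circ \pi^*$ (the dual statement to $(\sigma^{-1})^* v_i = v_{\tau(i)}$, i.e. $(\sigma^{-1})^* \circ \pi_{T^k}\text{-level} \Leftrightarrow \pi^* \circ \sigma$ on duals), and then cancelling the injective map $\pi^*$ and the surjection $p$, yields $\mu \circ \varphi^\sigma = \sigma \circ \mu$. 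Finally, compatibility in the sense of Definition~\ref{def:compatibility} is automatic once \eqref{eq:phisigmacomp} holds, since it is precisely condition~2) of Proposition~\ref{prop:compatibility} with $\epsilon(\varphi^\sigma) = 1$.

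The main obstacle is the bookkeeping of the two "sides" — $\mathfrak{t}^k$ versus $(\mathfrak{t}^k)^*$, and $T^n$ versus $(\mathfrak{t}^n)^*$ — and making sure the permutation $\tau$, its action on $\C^k$, and the dual maps $P_\tau$, $(\sigma^{-1})^*$, $\sigma$ are threaded through the three short exact sequences in \eqref{eq:ses} with the right variances and inverses. Once the single identity $\pi^* \circ \sigma = P_\tau \circ \pi^*$ (equivalently $\pi \circ \Phi^\sigma_{T^k} = (\sigma^{-1})^*_{T^n} \circ \pi$) is pinned down correctly from \eqref{eq:tau}, everything else is diagram-chasing that mirrors the construction of $\mu$ in Section~\ref{sec: delzantconst} and the descent argument in Proposition~\ref{prop:symplecticred}. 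I do not expect any analytic difficulty: $\Phi^\sigma$ is linear, so regularity and freeness of the $K$-action on $\nu^{-1}(0)$ are inherited from the original Delzant setup.
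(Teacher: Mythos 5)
Your proposal is correct and follows essentially the same route as the paper: lift $\sigma$ to the coordinate permutation of $\C^k$ determined by $\tau$, verify that it preserves $K$ and the level set $\nu^{-1}(0)$ (so that it descends through the symplectic reduction as in Proposition~\ref{prop:symplecticred}), and then chase the Delzant diagram to obtain $\mu\circ\varphi^{\sigma}=\sigma\circ\mu$. The only difference is a harmless indexing convention ($\tau^{-1}$ versus $\tau$ in the permutation of coordinates), and your explicit remark that $\kappa_i=\kappa_{\tau(i)}$ is what makes the permutation intertwine $\nu_0$ is a point the paper leaves implicit.
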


\noindent
Our construction below will shows that $\varphi^{\sigma \tau} = \varphi^{\sigma} \circ \varphi^{\tau}$. Consequently, the symmetries of $\Delta$ yield a subgroup of the symplectomorphisms of $M$, that we identify with $\mathcal{S}_\Delta$.

\begin{corollary}
The group of symmetries $\mathcal{S}_\Delta$ is a subgroup of $\Symp(M,\ow)$.
\end{corollary}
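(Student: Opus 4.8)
The plan is to construct $\varphi^\sigma$ explicitly via the Delzant construction by lifting $\sigma$ through all the maps in diagram~\eqref{eq:toricdiag}, and then verify the group homomorphism property $\varphi^{\sigma\tau} = \varphi^\sigma \circ \varphi^\tau$ directly from the formula for the lift. The key observation is that $\sigma \in \mathcal{S}_\Delta$ induces, via the permutation $\tau \in S_k$ of equation~\eqref{eq:tau} (not to be confused with the $\tau$ in the corollary; I will rename it), a permutation map $P_\sigma \colon \C^k \to \C^k$ given by $(z_1,\dots,z_k) \mapsto (z_{\tau^{-1}(1)},\dots,z_{\tau^{-1}(k)})$, which is a symplectomorphism of $(\C^k,\omega_0)$. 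First I would check that $P_\sigma$ intertwines the moment maps: since $\kappa_i = \kappa_{\tau(i)}$, the map $\nu_0 \circ P_\sigma$ equals $\widetilde\sigma^* \circ \nu_0$ where $\widetilde\sigma$ is the permutation of coordinates on $(\mathfrak{t}^k)^*$ dual to the permutation matrix. Because $(\sigma^{-1})^* v_i = v_{\tau(i)}$, this permutation is compatible with the characteristic map $\pi$, i.e. it restricts to an automorphism of $K = \ker\pi$ and descends to $\sigma$ on $\mathfrak{t}^n$ under $\pi^*$. Hence $P_\sigma$ preserves $K$-orbits and sends $\nu^{-1}(0)$ to itself, so by Proposition~\ref{prop:symplecticred} (or directly) it descends to a symplectomorphism $\varphi^\sigma$ of $M = \nu^{-1}(0)/K$.

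Next I would verify the compatibility equation~\eqref{eq:phisigmacomp}. Chasing diagram~\eqref{eq:toricdiag}: we have $\pi^* \circ \mu \circ p = \nu_0 \circ \iota$, and applying $\varphi^\sigma$ downstairs corresponds to applying $P_\sigma$ upstairs, so $\pi^* \circ \mu \circ \varphi^\sigma \circ p = \pi^* \circ \mu \circ p \circ P_\sigma = \nu_0 \circ \iota \circ P_\sigma = \nu_0 \circ P_\sigma \circ \iota = \widetilde\sigma^* \circ \nu_0 \circ \iota = \widetilde\sigma^* \circ \pi^* \circ \mu \circ p$. Since $\widetilde\sigma^* \circ \pi^* = \pi^* \circ \sigma$ (this is exactly the statement that the coordinate permutation on $(\mathfrak{t}^k)^*$ covers $\sigma$ on $(\mathfrak{t}^n)^*$, which follows by dualizing $(\sigma^{-1})^* v_i = v_{\tau(i)}$), and since $\pi^*$ is injective and $p$ is surjective, we conclude $\mu \circ \varphi^\sigma = \sigma \circ \mu$. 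Then $\varphi^\sigma$ is compatible by Proposition~\ref{prop:compatibility}.

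**For the corollary itself**, the homomorphism property is essentially free from this construction: the assignment $\sigma \mapsto P_\sigma$ on $\C^k$ is a group homomorphism into $\Symp(\C^k,\omega_0)$, because composing the polytope symmetries composes the induced permutations $\tau \in S_k$ (one must check that $\sigma \mapsto \tau_\sigma$ is a homomorphism, which follows from uniqueness of the facet data $\{(v_i,\kappa_i)\}$ and the fact that $(\sigma_1\sigma_2)^{-*} v_i = \sigma_1^{-*}(v_{\tau_{\sigma_2}(i)}) = v_{\tau_{\sigma_1}\tau_{\sigma_2}(i)}$). Since each $P_\sigma$ preserves $\nu^{-1}(0)$ and commutes with the $K$-action, descent to $M$ is functorial, giving $\varphi^{\sigma_1\sigma_2} = \varphi^{\sigma_1}\circ\varphi^{\sigma_2}$. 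Combined with compatibility, this exhibits $\sigma \mapsto \varphi^\sigma$ as an injective group homomorphism $\mathcal{S}_\Delta \to \Symp(M,\omega)$ — injectivity because $\varphi^\sigma = \id$ forces $\sigma = \id$ on $\Delta$ via~\eqref{eq:phisigmacomp} and hence on all of $(\mathfrak{t}^n)^*$ since $\Delta$ is full-dimensional.

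**The main obstacle** I anticipate is not any single hard step but keeping the book-keeping of transposes, inverses, and duals consistent — in particular pinning down precisely which permutation of the $\C^k$-coordinates corresponds to $\sigma$ versus $\sigma^{-1}$, so that the intertwining relations come out with the correct variance and the homomorphism property (rather than an anti-homomorphism) holds. A secondary subtlety is checking that $P_\sigma$ genuinely restricts to $\nu^{-1}(0)$ and descends through the $K$-quotient, which hinges on the fact that the coordinate permutation normalizes $K \subset T^k$; this is where the condition $(\sigma^{-1})^* v_i = v_{\tau(i)}$ does the real work, and it should be isolated as a short lemma before assembling the proof.
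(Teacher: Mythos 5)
Your proposal is correct and follows essentially the same route as the paper: lift $\sigma$ to a coordinate permutation of $\C^k$ via the Delzant construction, check that it normalizes $K$ and intertwines the moment maps (so it descends to a compatible symplectomorphism of $M$), and observe that the assignment is functorial in $\sigma$, which is exactly how the paper obtains $\varphi^{\sigma_1\sigma_2}=\varphi^{\sigma_1}\circ\varphi^{\sigma_2}$ and hence the subgroup statement. If anything you are more careful than the paper on the one genuine subtlety — with the paper's convention $\Phi(z)_i=z_{\tau(i)}$ the assignment $\tau\mapsto\Phi$ is an anti-homomorphism of $S_k$, so your choice of $\tau^{-1}$ in the coordinate permutation (and your explicit injectivity check) is the right way to make the homomorphism property come out on the nose.
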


\begin{remark} 
The lift~$\varphi^{\sigma}$ of~$\sigma$ is not uniquely determined by~\eqref{eq:phisigmacomp}. Nonetheless, any other $\varphi \in \Symp(M,\omega)$ satisfying $\mu \circ \varphi = \sigma \circ \mu$ is closely related to $\varphi^{\sigma}$. This will be discussed in Subsection~\ref{ssec:classification}.
\end{remark}

\proofof{Lemma \ref{lem:liftsymp}} The main idea of the proof is to view $M$ as a symplectic quotient of $\C^k$ via Delzant's construction, then to let the permutation $\tau \in S_k$ defined by~$(\ref{eq:tau})$ act on~$\C^k$ by permutation of coordinates, and to check that this map descends to a symplectomorphism on $M$. Recall that the Delzant construction describes the toric manifold $M$ as a symplectic quotient of $(\C^k,\omega_0)$ by $K = \ker \pi < T^k$. Let $\tau \in S_k$ be the permutation associated to $\sigma$ via equation \eqref{eq:tau}. Since we have $\pi(e_i) = v_i$, applying $(\sigma^{-1})^*$ to $\mathfrak{t}^n$ corresponds to permuting coordinates according to $\tau$ on $\mathfrak{t}^k$,
\begin{equation}
	\label{eq:groupcomp2}
	(\sigma^{-1})^*(\pi(e_i)) = \pi(e_{\tau(i)}).
\end{equation}
\noindent
Notice that this equation holds on the corresponding tori as well, since all maps involved preserve the corresponding lattices. This leads us to define the following permutations of coordinates
\begin{eqnarray}\label{eq: rhomap}
	\Phi : \C^k & \rightarrow & \C^k, \quad (z_1,\ldots,z_k) \mapsto (z_{\tau(1)},\ldots,z_{\tau(k)}), \\\label{eq: rhoTkmap}
	\Phi_{T^k} : T^k & \rightarrow & T^k, \quad (t_1,\ldots,t_k) \mapsto (t_{\tau(1)},\ldots,t_{\tau(k)}).
\end{eqnarray}
The map $\Phi$ is symplectic and compatible with the $T^k$-action,
\begin{equation} 
	\label{eq:rhocomp}
	\Phi(t.z) = \Phi_{T^k}(t).\Phi(z), \quad t \in T^k, \; z \in \C^k.
\end{equation}

\noindent
We will prove that $\Phi$ is $K$-compatible as well. This allows us to apply Proposition~\ref{prop:symplecticred} to the reduction in the Delzant construction, which yields an induced symplectomorphism $\varphi^{\sigma} \in \Symp(M,\omega)$ on the quotient $M$ such that the following diagram commutes
\begin{equation} 
	\label{eq:spacecomp}
	\begin{tikzcd}
	M \arrow[loop above]{d}{\varphi^{\sigma}}
	& \nu^{-1}(0) \arrow[hook]{r} \arrow[loop above]{}{\Phi \vert_{\nu^{-1}(0)}} \arrow[two heads]{l}{/K}
	&  (\C^k, \omega_0)  \arrow[loop above]{}{\Phi}.
	\end{tikzcd}
\end{equation}

\noindent
Since $\Phi$ is compatible with the full $T^k$-action, it suffices to prove that $\Phi_{T^k}$ preserves the subgroup $K$ in order to prove that it is $K$-compatible. The identity \ref{eq:groupcomp2} now reads
\begin{equation}
	\label{eq:groupcomp}
	\sigma^* \circ \pi = \pi \circ \Phi^{-1}_{T^k}.
\end{equation}

\noindent
Recall that by definition $K = \ker \pi$ and hence
\begin{equation}
\label{eq:kpreserved}
\Phi_{T^k} (K) = \Phi_{T^k} (\ker \pi) = \ker (\pi \circ \Phi_{T^k}^{-1}) = \ker (\sigma^* \circ \pi) = \ker \pi = K,
\end{equation}
where we have used elementary properties of $\ker(\cdot)$. This proves that $\Phi$ is $K$-compatible and therefore that $\varphi^{\sigma} : M \rightarrow M$ is a well-defined symplectomorphism. \\

We now show that the symplectomorphism $\varphi^{\sigma}$ is compatible with the Hamiltonian $T^n$-action on $M$ and induces the initially chosen symmetry $\sigma \in \mathcal{S}_{\Delta}$ on $\Delta$,
\begin{equation*} 
	\label{eq:momentcomp}
	\mu \circ \varphi^{\sigma} = \sigma \circ \mu.
\end{equation*}

\noindent
Since $\Phi$ is $T^k$-compatible, Proposition \ref{prop:compatibility} yields
\begin{equation}
	\nu_0 \circ \Phi = (\Phi_{T^k}^{-1})^* \circ \nu_0.
\end{equation}
Adding all of the above maps to~$(\ref{eq:toricdiag})$ we obtain the following commutative diagram.
\begin{equation*} 
	\begin{tikzcd}
		M 						\arrow[loop above]{d}{\varphi^{\sigma}}				\arrow{dr}{\mu}
		& \nu^{-1}(0) 			\arrow[loop above]{}{\Phi\vert_{\nu^{-1}(0)}}			\arrow[two heads]{l}[swap]{p}{/K} \arrow[hook]{r}{\iota}		
		& \C^k  				\arrow[loop above]{}{\Phi}								\arrow{d}{\nu_0}	\arrow{dr}{\nu}  \\
		0 													\arrow{r}
		& (\mathfrak{t}^n)^* 	\arrow[loop below]{d}{\sigma}							\arrow{r}{\pi^*}
		& (\mathfrak{t}^k)^* 	\arrow[loop below]{}{(\Phi_{T^k}^{-1})^*}				\arrow{r}{j^*}
		& \mathfrak{k}^*		\arrow{r}
		& 0
	\end{tikzcd}
\end{equation*}
\noindent
Since the moment map $\mu$ was defined by this diagram, the claim follows.
\proofend

\begin{example} 
{\rm 
Let $M = S^2 = \C P^1$ be equipped with its standard toric structure given in Example \ref{ex:s2delzant}. The moment polytope is $\Delta = [-1,1]$ and there is only one non-trivial symmetry $\sigma \in \mathcal{S}_{\Delta}$ given by $z\mapsto -z$. Since $\sigma$ exchanges the two normal vectors $v_1 = (1)$ and $v_2 = (-1)$, the corresponding permutation is the non-trivial permuation on two elements and $\Phi(z_1,z_2) = (z_2,z_1)$. This yields the lifted symplectomorphism $\varphi^{\sigma}([z_1 : z_2]) = [z_2 : z_1]$ on $S^2 \cong \C P^1$. When we view $S^2$ as embedded in $\R^3$, this symplectomorphism corresponds to the map $(x,y,z) \mapsto (x,-y,-z)$, which obviously induces $\sigma$ on $\Delta$.
}
\end{example}

\begin{example}
{\rm
Let $M= \C P^2$ be equipped with its standard toric structure. Then $\mathcal{S}_{\Delta}$ is isomorphic to the dihedral group $D_3$. The generators given in Example~\ref{ex:symmetry} correspond to the symplectomorphisms
	\begin{equation*}
		[z_0 : z_1 : z_2] \mapsto [z_2 : z_0 : z_1] \; \text{ and } \; [z_0 : z_1 : z_2] \mapsto [z_0 : z_2 : z_1].
	\end{equation*}
}
\end{example}

\subsection{Lifted antisymplectic involutions}
\label{ssec:liftantisymp}
We restrict our attention to the case where $\sigma \in \mathcal{S}_{\Delta}$ is an involution and prove Theorem~\ref{thm: antilift} stated in the introduction.

\begin{theorem}
\label{thm:liftanti}
Let $\sigma \in \mathcal{S}_{\Delta}$ be an involution on the Delzant polytope of a toric symplectic manifold $(M,\omega,\mu)$. Then there is an antisymplectic involution $R^{\sigma}$ such that 
	\begin{equation}
	\label{eq:comprsigma}
		\mu \circ R^{\sigma} = \sigma \circ \mu.
	\end{equation}
\end{theorem}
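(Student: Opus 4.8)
The plan is to mimic the construction of $\varphi^{\sigma}$ from Lemma~\ref{lem:liftsymp}, but to precompose the coordinate permutation $\Phi$ on $\C^k$ with complex conjugation, so that the induced map on $M$ becomes antisymplectic rather than symplectic. Concretely, let $\tau \in S_k$ be the permutation associated to $\sigma$ via~\eqref{eq:tau}, and define
\begin{equation*}
	\Psi \colon \C^k \longrightarrow \C^k, \quad (z_1,\ldots,z_k) \longmapsto (\overline{z_{\tau(1)}},\ldots,\overline{z_{\tau(k)}}),
\end{equation*}
together with the corresponding map $\Psi_{T^k}(t_1,\ldots,t_k) = (t_{\tau(1)}^{-1},\ldots,t_{\tau(k)}^{-1})$ on $T^k$. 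Since $\sigma$ is an involution, so is $\tau$, and hence $\Psi$ is an involution (the conjugation and the permutation commute, and each squares to the identity). The standard symplectic form $\omega_0$ on $\C^k$ satisfies $c^*\omega_0 = -\omega_0$ for complex conjugation $c$, while the permutation is symplectic, so $\Psi^*\omega_0 = -\omega_0$; thus $\Psi$ is an antisymplectic involution of $(\C^k,\omega_0)$.

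Next I would check that $\Psi$ is compatible with the $T^k$-action in the sense of Proposition~\ref{prop:compatibility}, i.e. $\Psi(t.z) = \Psi_{T^k}(t).\Psi(z)$; this is immediate from the corresponding identity~\eqref{eq:rhocomp} for $\Phi$ together with the fact that conjugation intertwines the $T^k$-action with its inverse. I then need that $\Psi$ is $K$-compatible, meaning $\Psi_{T^k}(K) = K$. But $\Psi_{T^k}$ is the composition of $\Phi_{T^k}$ with the group inversion on $T^k$; the former preserves $K$ by exactly the computation~\eqref{eq:kpreserved}, and the latter preserves every subgroup. Hence $\Psi_{T^k}(K) = K$, so $\Psi$ descends, via Proposition~\ref{prop:symplecticred} applied to the Delzant reduction diagram~\eqref{eq:delzantred}, to an antisymplectic diffeomorphism $R^{\sigma}$ of $M = \nu^{-1}(0)/K$ fitting into a diagram analogous to~\eqref{eq:spacecomp}. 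That $R^{\sigma}$ is an involution follows because $\Psi$ is an involution preserving $\nu^{-1}(0)$ and the descent is functorial. Finally, to get the intertwining relation~\eqref{eq:comprsigma}, one traces through the diagram~\eqref{eq:toricdiag} augmented with $\Psi$, $\Psi_{T^k}$, and the induced maps on $(\mathfrak{t}^k)^*$ and $(\mathfrak{t}^n)^*$; the key point is that $\nu_0 \circ \Psi = -(\Phi_{T^k}^{-1})^* \circ \nu_0$ (the minus sign coming from conjugation, reflecting $\epsilon(\Psi) = -1$ in~\eqref{eq:comprel}), and on $(\mathfrak{t}^n)^*$ this minus sign combined with the transpose gives precisely $\sigma$ rather than $(\sigma^{-1})^* = \sigma^*$ — but since $\sigma$ is an involution in $\Aut_\Z(\mathfrak{t}^n)^*$ one has $\sigma^{-1} = \sigma$, and a short bookkeeping check reconciles the signs so that $\mu \circ R^{\sigma} = \sigma \circ \mu$.

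The step I expect to require the most care is this last sign-and-transpose bookkeeping: complex conjugation on $\C^k$ forces $\epsilon = -1$ in Proposition~\ref{prop:compatibility}, so the relation~\eqref{eq:comprel} reads $\tau_*^{-1} = -\sigma^* = \alpha$, and one must verify that the minus sign introduced at the level of $(\mathfrak{t}^k)^*$ by $\nu_0 \circ \Psi$ propagates correctly through the injection $\pi^*$ to yield $\sigma$ (and not $-\sigma$ or $\sigma^*$) acting on $(\mathfrak{t}^n)^*$. Everything else — that $\Psi$ is a well-defined antisymplectic involution, that it is $K$-compatible, and that it descends — is a routine adaptation of the symplectic case already carried out in Lemma~\ref{lem:liftsymp}, now invoking the antisymplectic half of Propositions~\ref{prop:compatibility} and~\ref{prop:symplecticred}.
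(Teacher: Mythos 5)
Your overall architecture is sound and is essentially the paper's construction, just packaged upstairs: the paper builds the symplectic lift $\varphi^{\sigma}$ and the conjugation involution $R^0$ separately, checks that they commute (because $\Phi$ and $\rho^0$ commute on $\C^k$), and sets $R^{\sigma}=R^0\circ\varphi^{\sigma}$, whereas you descend the single map $\Psi=\rho^0\circ\Phi$ directly; the paper records exactly your $\Psi$ in \eqref{eq:rhodef} as an equivalent description of $R^{\sigma}$. All of your structural steps are correct: $\Psi$ is an antisymplectic involution, it is compatible with the $T^k$-action, $\Psi_{T^k}(K)=K$ by \eqref{eq:kpreserved} plus the fact that inversion preserves subgroups, and Proposition~\ref{prop:symplecticred} descends it to an antisymplectic involution of $M$.

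However, the identity you single out as the key point is false as stated. One has $\nu_0\circ\Psi=(\Phi_{T^k}^{-1})^*\circ\nu_0$ with \emph{no} minus sign, i.e.\ exactly the same relation as for the purely symplectic permutation $\Phi$. Indeed $\nu_0$ depends only on the moduli $|z_i|^2$, which are conjugation-invariant, and $\kappa_{\tau(i)}=\kappa_i$ by \eqref{eq:tau}, so the $i$-th component of $\nu_0(\Psi(z))$ is $\tfrac12|z_{\tau(i)}|^2-\kappa_{\tau(i)}=(\nu_0(z))_{\tau(i)}$; your proposed formula $\nu_0\circ\Psi=-(\Phi_{T^k}^{-1})^*\circ\nu_0$ already fails on the inhomogeneous term $-\kappa$. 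The sign bookkeeping in \eqref{eq:comprel} is consistent with this: since $(\Psi_{T^k})_*=-(\Phi_{T^k})_*$, one gets $\sigma_\Psi^*=\epsilon(\Psi)\,(\Psi_{T^k})_*^{-1}=(-1)\cdot\bigl(-(\Phi_{T^k})_*^{-1}\bigr)=(\Phi_{T^k}^{-1})_*$, so the two minus signs cancel and the induced map on $(\mathfrak{t}^k)^*$ is the same coordinate permutation as in Lemma~\ref{lem:liftsymp}. There is therefore nothing to ``reconcile'': the final diagram chase is verbatim the one from that lemma, namely $\pi^*\circ\mu\circ R^{\sigma}\circ p=\nu_0\circ\iota\circ\Psi=(\Phi_{T^k}^{-1})^*\circ\pi^*\circ\mu\circ p=\pi^*\circ\sigma\circ\mu\circ p$ using the dual of \eqref{eq:groupcomp}, and injectivity of $\pi^*$ gives $\mu\circ R^{\sigma}=\sigma\circ\mu$. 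As written, the ``short bookkeeping check'' you defer would instead produce a spurious minus sign (and your remark about obtaining ``$\sigma$ rather than $\sigma^*$'' conflates the action on $(\mathfrak{t}^n)^*$ with the action on normal vectors in $\mathfrak{t}^n$); once the sign is corrected the proof closes.
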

\noindent
The idea of proof is as follows. By Lemma~\ref{lem:liftsymp}, the involution $\sigma$ lifts to a symplectic involution $\varphi^{\sigma}$ on $M$. To make this map antisymplectic, we will compose it with a standard antisymplectic involution $R^0$, coming from the toric structure on $M$, and apply the following remark

\begin{remark}
\label{rk:commuteinvol}
Let $S$ be a symplectic involution and $R$ an antisymplectic involution on a symplectic manifold $M$, such that $S$ and $R$ commute. Then $S \circ R$ defines an antisymplectic involution on $M$.
\end{remark}
\noindent
In order to prove the theorem, we will first construct the antisymplectic involution $R^0$ on $M$. This construction is well-known, see for example \cite[Definition 2.6]{Gui} or \cite[Section 2.6]{Haug}. We will prove it for the convenience of the reader and in order to expose its relation to the Delzant construction. 

\begin{proposition}
\label{prop:stdantiinvol}
Let $(M,\omega,\mu)$ be a toric symplectic manifold. Then there is an antisymplectic involution $R^0$ which leaves the moment map invariant, i.e. 
	\begin{equation} 
	\label{eq:rzerocomp}
		\mu \circ R^0 = \mu.
	\end{equation}
\end{proposition}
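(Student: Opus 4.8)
The strategy is to construct $R^0$ on the symplectic quotient model $M \cong \nu^{-1}(0)/K$ by exhibiting a well-chosen antisymplectic involution on $\C^k$ that descends. The natural candidate is complex conjugation $c \colon \C^k \to \C^k$, $(z_1,\dots,z_k) \mapsto (\overline{z_1},\dots,\overline{z_k})$. This map is antisymplectic for the standard form, $c^*\omega_0 = -\omega_0$, and it is an involution. First I would check that $c$ is compatible with the $T^k$-action: if $t = (t_1,\dots,t_k) \in T^k$ acts diagonally, then $c(t.z) = \overline{t}.c(z) = t^{-1}.c(z)$, so the associated automorphism of $T^k$ is group inversion, $\Phi_{T^k}^0(t) = t^{-1}$. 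In particular $c$ is compatible in the sense of Proposition~\ref{prop:compatibility} with $\epsilon(c) = -1$.

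Next I would verify that $c$ restricts to an involution on $\nu^{-1}(0)$ that is $K$-compatible, so that Proposition~\ref{prop:symplecticred} applies. Since the automorphism of $T^k$ induced by $c$ is inversion, and $K < T^k$ is a subgroup, inversion certainly preserves $K$; hence $c$ is $K$-compatible. That $c$ preserves the level set follows from Proposition~\ref{prop:compatibility}: the moment map $\nu_0$ satisfies $\nu_0 \circ c = (\Phi_{T^k}^0)^{-1,*} \circ \nu_0 = \nu_0$ because inversion on $T^k$ induces $-\id$ on $\mathfrak{t}^k$ and hence on $(\mathfrak{t}^k)^*$ — wait, one must be careful: by the relation $(\ref{eq:comprel})$, $\tau_*^{-1} = \epsilon(\varphi)\sigma^*$, so with $\tau_* = -\id$ and $\epsilon = -1$ we get $\sigma^* = \id$, i.e. $\nu_0 \circ c = \nu_0$, which indeed preserves $\nu^{-1}(0)$ and even fixes it levelwise. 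Then Proposition~\ref{prop:symplecticred} produces an antisymplectic (since $\epsilon(c) = -1$) involution $R^0 = \widehat{c}$ on $M$ fitting into the reduction diagram.

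Finally I would identify $R^0$ on the torus side to get $(\ref{eq:rzerocomp})$. Feeding $c$ into the commutative diagram $(\ref{eq:toricdiag})$ exactly as was done for $\varphi^\sigma$ in the proof of Lemma~\ref{lem:liftsymp}, the induced map on $(\mathfrak{t}^n)^*$ is the one compatible with $R^0$, and it equals $(\Phi_{T^k}^0)^{-1,*}$ transported through $\pi^*$; since that map is $\id$, we conclude $\mu \circ R^0 = \mu$. Alternatively, and more cleanly: by Proposition~\ref{prop:compatibility} applied to $R^0$ on $M$, there is $\sigma_0 \in \Aut_\Z(\mathfrak{t}^n)^*$ with $\mu \circ R^0 = \sigma_0 \circ \mu$; chasing the diagram shows $\sigma_0 = \id$, because the automorphism of $T^k$ was inversion, whose image under $\pi$ on $T^n$ is again inversion, whose differential is $-\id$, and combined with $\epsilon = -1$ via $(\ref{eq:comprel})$ gives $\sigma_0^* = \id$.

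The only genuinely delicate point — the "main obstacle" — is bookkeeping the signs: one must track carefully how $\epsilon(\varphi) = -1$ enters relation $(\ref{eq:comprel})$ so as not to spuriously conclude $\sigma_0 = -\id$ instead of $\id$. Everything else (antisymplecticity of $c$, its $T^k$- and $K$-compatibility, invariance of $\nu^{-1}(0)$, descent via Proposition~\ref{prop:symplecticred}) is routine given the machinery already set up.
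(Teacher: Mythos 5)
Your proposal is correct and follows essentially the same route as the paper: descend complex conjugation on $\C^k$ through the Delzant reduction using Proposition~\ref{prop:symplecticred}, after checking it is $T^k$-compatible via inversion and hence $K$-compatible. The paper shortcuts your sign bookkeeping by simply observing that $\nu_0$, depending only on the $|z_i|^2$, is manifestly invariant under conjugation, but the content is the same.
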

\proof Let 
\begin{equation*} 
	\begin{tikzcd}
		M 
		& \nu^{-1}(0) \arrow[two heads]{l}[swap]{\pi}{/K} \arrow[hook]{r}{\iota}
		& \C^k
	\end{tikzcd}
\end{equation*}
denote the reduction diagram of the Delzant construction. Take the standard antisymplectic involution on $\C^k$ defined by complex conjugation $\rho^0(z_1,...,z_k) = (\overline{z}_1,...,\overline{z}_k)$. It descends to an antisymplectic involution on the quotient $M$ satisfying equation~\ref{eq:rzerocomp}. Indeed, we have
\begin{equation*}
	\rho^0(t.z) = t^{-1}.\rho^0(z), \quad t \in T^k, z \in \C^k. 
\end{equation*}
\noindent
Hence, $\rho^0$ is compatible in the sense of Definition~\ref{def:compatibility} and the claim follows from Proposition~\ref{prop:symplecticred}, since~$\rho^0$ leaves the moment map~$\nu_0$ invariant. 
\proofend

\begin{remark}
\label{rk:reallocus}
In the context of algebraic geometry, the fixed point set $\Fix R^0$ of the above involution is commonly referred to as \emph{real locus} of the toric variety $M$. See also \cite{DJ} for a topological generalization of real toric varieties.
\end{remark}

\begin{remark}
Alternatively, $R^0$ can be viewed as follows. The pre-image $\mu^{-1}(\mathring{\Delta}) \subset M$ of the interior of $\Delta$ is equivariantly symplectomorphic to the product $T^n \times \mathring{\Delta} \subset T^*T^n$ equipped with the natural symplectic form. In the language of Hamiltonian dynamics, this symplectomorphism is referred to as \emph{global action-angle coordinates}, since it corresponds to Arnold--Liouville coordinates on an open dense subset of $M$. There is a natural antisymplectic involution on $T^n \times \mathring{\Delta}$ given by group inversion on the $T^n$-component. This involution extends to all of $M$ and corresponds to $R^0$.
\end{remark}

\proofof{Theorem \ref{thm:liftanti}} Let $\sigma \in \mathcal{S}_{\Delta}$ be an involution of the moment polytope and $\varphi^{\sigma}$ its symplectic lift to $M$ given by Lemma~\ref{lem:liftsymp}. We will show that $\varphi^{\sigma}$ and $R^0$ commute and apply Remark~\ref{rk:commuteinvol}. Recall from the proof of Lemma~\ref{lem:liftsymp} that $\varphi^{\sigma}$ is induced by a coordinate permutation $\Phi(z_1,\ldots,z_k) = (z_{\tau(1)},\ldots,z_{\tau(k)})$ on $\C^k$. Similarly, $R^0$ is induced by complex conjugation $\rho^0(z_1,...,z_k) = (\overline{z}_1,...,\overline{z}_k)$. Since the maps $\Phi$ and $\rho^0$ commute, the corresponding maps $\varphi^{\sigma}$ and $R^0$ on $M$ commute as well. Hence we can define the antisymplectic involution 
\begin{equation*}
	R^{\sigma} = R^0 \circ \varphi^{\sigma}.
\end{equation*}
The compatibility condition~$(\ref{eq:comprsigma})$ follows from equations~$(\ref{eq:phisigmacomp})$ and $(\ref{eq:rzerocomp})$.
\proofend

\noindent 
The following alternative view of $R^{\sigma}$ will be used in Section \ref{sec: realdelconst}. Define
\begin{equation}
	\label{eq:rhodef}
	\rho = \rho^0 \circ \Phi : \C^k \rightarrow \C^k, \quad (z_1,\ldots,z_k) \mapsto (\overline{z}_{\tau(1)},\ldots,\overline{z}_{\tau(k)}).
\end{equation}
This is a compatible antisymplectic involution. The corresponding group involution is given by 
\begin{equation}
	\label{eq:rhot}
	\rho_{T^k} = \rho^0_{T^k} \circ \Phi_{T^k} : T^k \rightarrow T^k,   \quad (t_1,\dots,t_k) \mapsto (t^{-1}_{\tau(1)},\dots,t^{-1}_{\tau(k)}).
\end{equation}
Since $\rho^0_{T^k}$ and $\Phi_{T^k}$ both preserve $K$ (see equation~\ref{eq:kpreserved}), so does $\rho_{T^k}$ and hence $\rho$ is $K$-compatible. By Proposition~\ref{prop:symplecticred}, $\rho$ induces an antisymplectic involution $R^{\sigma}$ on $M$. By the compatibility of~$R^{\sigma}$, there is an involutive automorphism~$R^{\sigma}_{T^n}$ on the torus such that~$R^{\sigma}(t.p)=R^{\sigma}_{T^n}(t).R^{\sigma}(p)$. This automorphism is related to~$\rho_{T^k}$ via 
\begin{equation}
	\label{eq:antigroupcomp}
	R^{\sigma}_{T^n}\circ \pi = \pi \circ \rho_{T^k}.
\end{equation}
Note that this is a direct analogue of~$(\ref{eq:groupcomp})$.

\subsection{Classification of compatible maps}
\label{ssec:classification}

The lifts constructed in Sections~$\ref{ssec:liftsymp}$ and~$\ref{ssec:liftantisymp}$ are not unique with respect to their respective compatibility conditions. However, any two maps inducing the same symmetry $\sigma \in \mathcal{S}_{\Delta}$ on the moment polytope are closely related. We start by a lemma which follows from Proposition~\ref{prop:compatibility}. 

\begin{lemma}
\label{lem:equivariantsympl}
Let $\varphi$ be a symplectomorphism on a toric symplectic manifold $(M,\omega,\mu)$ which leaves the moment map invariant, i.e.\ $\mu \circ \varphi = \mu$. Then $\varphi$ is of the form 
	\begin{equation}
	\label{eq:fibrerot}
		p \mapsto \theta(\mu(p)).p
	\end{equation}
for a smooth map\footnote{We define $\theta : \Delta \rightarrow T^n$ to be \emph{smooth} if it comes from a smooth map $\tilde{\theta} : M \rightarrow T^n$ which satisfies $\theta \circ \mu = \tilde{\theta}$.}  $\theta : \Delta \rightarrow T^n$. 
\end{lemma}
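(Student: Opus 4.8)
The plan is to use the characterization of compatible maps in Proposition~\ref{prop:compatibility} together with the fact that for a toric manifold the moment map is a quotient map for the $T^n$-action. First I would observe that since $\mu \circ \varphi = \mu = \id \circ \mu$, the map $\varphi$ is compatible with $\epsilon(\varphi)=1$ and $\sigma = \id$, so by the relation \eqref{eq:comprel} the associated torus automorphism is $\tau = \id_{T^n}$; equation \eqref{eq:comp1} then reads $\varphi(t.p) = t.\varphi(p)$, i.e.\ $\varphi$ is $T^n$-equivariant. Next I would use the hypothesis $\mu(\varphi(p)) = \mu(p)$ to conclude that $\varphi(p)$ lies in the same $\mu$-fibre as $p$; since for a toric manifold the $T^n$-action is transitive on each fibre of $\mu$ (the fibres are $T^n$-orbits, because $n = \tfrac12 \dim M$ and the action is effective, which forces generic orbits to be Lagrangian tori and all orbits to be $\mu$-fibres), there is some $t \in T^n$ with $\varphi(p) = t.p$.

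The core of the argument is then to show that this $t$ can be chosen to depend only on $\mu(p)$ and to depend smoothly on it. For a point $p$ in a free orbit (i.e.\ $\mu(p) \in \mathring\Delta$), the element $t$ with $\varphi(p)=t.p$ is unique, so we get a well-defined function $\theta$ on the open dense set $\mu^{-1}(\mathring\Delta)$; equivariance of $\varphi$ shows $\theta(t.p) = \theta(p)$ (using that the stabilizer is trivial there), so $\theta$ descends to a function on $\mathring\Delta$. Smoothness on $\mathring\Delta$ can be seen using action-angle coordinates $\mu^{-1}(\mathring\Delta) \cong T^n \times \mathring\Delta$: in these coordinates $\varphi(q,x) = (q + \theta(x), x)$, and smoothness of $\varphi$ forces smoothness of $\theta$. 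The remaining issue is to extend $\theta$ continuously, and then smoothly, across $\partial\Delta$, i.e.\ over the strata where $T^n$ has nontrivial stabilizers. Here the key point is that on a facet stratum the stabilizer is a subcircle, and equivariance of $\varphi$ plus the fact that $\varphi$ preserves each isotropy stratum forces $\theta$ to extend continuously with values lying in the allowed coset; one checks that the formula $p \mapsto \theta(\mu(p)).p$ is unambiguous precisely because $\theta$ takes values in $\Stab(p)^\perp$ in the appropriate sense. To get a genuinely smooth $\tilde\theta : M \to T^n$, one argues locally near a boundary stratum using the standard symplectic normal form for toric singular fibres ($\C^{n-j}\times (T^j \times \mathbb{R}^j)$-type models), in which $\varphi$ again becomes an explicit equivariant fibre-rotation whose rotation parameter is visibly smooth.

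\textbf{Expected main obstacle.} The routine part is the equivariance and the construction of $\theta$ on the open dense stratum; the genuine difficulty is the extension and smoothness across $\partial\Delta$. The subtlety is twofold: first, over a boundary stratum the element $t$ with $\varphi(p)=t.p$ is only well-defined modulo the (nontrivial) stabilizer, so one must argue that a smooth \emph{section} of possible choices exists globally; second, one must verify that the resulting $\tilde\theta : M \to T^n$ is smooth at points of lower-dimensional strata, which requires the local toric normal form rather than a soft argument. I expect the cleanest route is to avoid patching by instead lifting everything to the Delzant picture: by Lemma~\ref{lem:liftsymp}'s framework, $\varphi$ with $\mu\circ\varphi=\mu$ lifts (non-uniquely) to a $K$-compatible symplectomorphism of $\nu^{-1}(0) \subset \C^k$ commuting with the residual torus action, and on $\C^k$ an equivariant symplectomorphism commuting with the full $T^k$-action and preserving $\nu_0$ is manifestly of the form $z \mapsto \Theta(\nu_0(z)).z$ for a smooth $T^k$-valued function $\Theta$ of the moduli $(\tfrac12|z_i|^2)_i$; descending this to $M$ yields the claimed smooth $\theta:\Delta\to T^n$ directly, sidestepping the stratified analysis.
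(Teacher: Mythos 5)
Your outline is correct and in fact more honest about where the difficulty lies than the paper's own proof, but the two arguments are organized in opposite orders. The paper works downstairs-first on $M$: it asserts directly that fibre-preservation plus transitivity of $T^n$ on the $\mu$-fibres yields a \emph{globally defined smooth} map $\tilde{\theta}\colon M\to T^n$ with $\varphi(p)=\tilde{\theta}(p).p$, and then spends its effort on the easy half, namely showing via $T^n$-equivariance that $\tilde{\theta}(p').p'=\tilde{\theta}(p).p'$ for $p,p'$ in the same fibre and concluding constancy on fibres from freeness of the action on a dense set, so that $\tilde{\theta}$ factors through $\mu$. You instead build $\theta$ on $\mathring{\Delta}$, where the rotation element is unique, and then confront the extension across $\partial\Delta$. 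These are genuinely different decompositions of the same problem: the paper's phrasing hides the entire boundary issue inside the unargued existence of a smooth global $\tilde{\theta}$ (transitivity only gives a pointwise choice, well-defined modulo $\Stab(p)$ over singular fibres), which is precisely the obstacle you isolate. Your identification of that obstacle, and your observation that on the open dense stratum everything is immediate in action-angle coordinates, is the right picture.

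That said, your ``cleanest route'' via the Delzant picture has two soft spots. First, the claim that a $T^n$-equivariant symplectomorphism of $M$ preserving $\mu$ lifts to a $T^k$-equivariant symplectomorphism of $\nu^{-1}(0)$ is itself a nontrivial statement about automorphisms of the principal $K$-bundle $\nu^{-1}(0)\to M$ covering $\varphi$; it is plausible (e.g.\ because $\varphi$ is fibrewise a translation, hence homotopic to the identity), but it needs an argument and is not provided by Lemma~\ref{lem:liftsymp}, which goes in the other direction (from $\C^k$ down to $M$). Second, the assertion that a $T^k$-equivariant, $\nu_0$-preserving symplectomorphism of $\C^k$ is ``manifestly'' of the form $z\mapsto\Theta(\nu_0(z)).z$ with $\Theta$ smooth is exactly the statement you are trying to prove, transplanted to the linear model: the smoothness of the rotation parameter at points where some $z_i=0$ still requires a Whitney--Schwarz type argument (a smooth rotation-invariant function of $z\in\C$ is a smooth function of $\lvert z\rvert^2$, and one must divide by $\lvert z\rvert^2$ and control the quotient at the origin). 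The linear model does make this tractable, so the strategy works, but neither step is free; if you want a complete proof you must either carry out these two lemmas or adopt the paper's shortcut and justify the global smooth section $\tilde{\theta}$ directly.
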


\noindent 
Symplectomorphisms of the form \ref{eq:fibrerot} will be denoted by $\psi_{\theta} : M \rightarrow M$. These maps rotate a given fibre $\mu^{-1}(b)$ by an angle $\theta(b) \in T^n$.\\

\proofof{Lemma \ref{lem:equivariantsympl}} 
Let $\varphi$ be a symplectomorphism which leaves the moment map invariant. Hence $\varphi$ preserves the torus fibres, and since $T^n$ acts transitively on each fibre, there is a smooth map $\tilde{\theta} : M \rightarrow T^n$ such that $\varphi$ takes the form
	\begin{equation*}
		\varphi(p) = \tilde{\theta}(p).p, \quad p \in M.
	\end{equation*}
We will prove that $\tilde{\theta}(p)=\tilde{\theta}(p')$ whenever $p$ and $p'$ lie in the same fibre, and thus $\tilde{\theta}$ factors through $\Delta$ to yield a map $\theta : \Delta \rightarrow T^n$. By~\eqref{prop:compatibility}, the symplectomorphism $\varphi$ is $T^n$-equivariant, i.e. 
	\begin{equation*}
		\varphi(t.p) = t.\varphi(p), \quad t \in T^n, p \in M.
	\end{equation*}
Now let $p,p' \in M$ be points in the fibre over $b\in \Delta$. Since $T^n$ acts transitively on $\mu^{-1}(b)$, there is $t \in T^n$ such that $p' = t. p$. Using the $T^n$-equivariance of $\varphi$, we compute
	\begin{equation*}
		\tilde{\theta}(p').p' = \varphi(p') = \varphi(t.p) = t.\varphi(p) = (t\tilde{\theta}(p)).p = (\tilde{\theta}(p) t).p =\tilde{\theta}(p).p'.
	\end{equation*}
Since the action of $T^n$ is free on an open dense subset of $M$, we deduce that $\tilde{\theta}(p)=\tilde{\theta}(p')$ and thus $\tilde{\theta}$ is constant on fibres.
\proofend 

\noindent
Lemma \ref{lem:equivariantsympl} allows us to classify compatible symplectomorphisms as well as compatible antisymplectic involutions. We will use the convention established in \ref{ssec:liftsymp} and \ref{ssec:liftantisymp} and denote the standard lift of $\sigma \in \mathcal{S}_{\Delta}$ by $\varphi^{\sigma}$ and $R^{\sigma}$, respectively.

\begin{proposition}
Let $\varphi$ be a compatible symplectomorphism on $M$ such that $\mu \circ \varphi = \sigma \circ \mu$, for $\sigma \in \mathcal{S}_{\Delta}$. Then there is a smooth map $\theta : \Delta \rightarrow T^n$ such that 
	\begin{equation*}
		\varphi =  \psi_{\theta} \circ \varphi^{\sigma}.
	\end{equation*}
\end{proposition}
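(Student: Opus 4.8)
The plan is to reduce this to Lemma~\ref{lem:equivariantsympl} by comparing the given symplectomorphism $\varphi$ with the standard lift $\varphi^{\sigma}$ and showing that the discrepancy leaves the moment map invariant. First I would consider the composition
\[
\psi := \varphi \circ (\varphi^{\sigma})^{-1} \colon M \longrightarrow M.
\]
Since $\varphi^{\sigma}$ is itself a symplectomorphism (by Lemma~\ref{lem:liftsymp}), so is its inverse, and hence $\psi$ is a symplectomorphism of $(M,\omega)$. The key computation is that $\psi$ leaves the moment map invariant: using $\mu \circ \varphi = \sigma \circ \mu$ and $\mu \circ \varphi^{\sigma} = \sigma \circ \mu$ (equation~\eqref{eq:phisigmacomp}), together with the fact that $\sigma$ is invertible, we get
\[
\mu \circ \psi = \mu \circ \varphi \circ (\varphi^{\sigma})^{-1} = \sigma \circ \mu \circ (\varphi^{\sigma})^{-1} = \sigma \circ \sigma^{-1} \circ \mu = \mu,
\]
where in the third equality we used that $\mu \circ \varphi^{\sigma} = \sigma \circ \mu$ implies $\mu \circ (\varphi^{\sigma})^{-1} = \sigma^{-1} \circ \mu$.

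Now I would invoke Lemma~\ref{lem:equivariantsympl} directly: since $\psi$ is a symplectomorphism of the toric symplectic manifold $(M,\omega,\mu)$ satisfying $\mu \circ \psi = \mu$, there is a smooth map $\theta \colon \Delta \to T^n$ with $\psi = \psi_{\theta}$, i.e.\ $\psi(p) = \theta(\mu(p)).p$ for all $p \in M$. Unwinding the definition of $\psi$ gives $\varphi = \psi \circ \varphi^{\sigma} = \psi_{\theta} \circ \varphi^{\sigma}$, which is exactly the claimed form. This completes the argument.

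The only point that requires a word of care — and the main (mild) obstacle — is checking that $\psi = \varphi \circ (\varphi^{\sigma})^{-1}$ genuinely satisfies the hypotheses of Lemma~\ref{lem:equivariantsympl}, namely that it is an honest symplectomorphism and not merely a diffeomorphism. This is immediate here since both $\varphi$ and $\varphi^{\sigma}$ preserve $\omega$, so their composition does too; no sign issues arise because we are in the symplectic (not antisymplectic) setting. One should also note that $\theta$ is not unique: it is only determined up to a map $\Delta \to T^n$ that vanishes on the part of $\mu(L)$ where the torus action fails to be free, but this matches the non-uniqueness already flagged for $\varphi^{\sigma}$ and does not affect the statement. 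Everything else is a direct substitution.
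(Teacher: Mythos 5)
Your proposal is correct and follows exactly the paper's argument: apply Lemma~\ref{lem:equivariantsympl} to the symplectomorphism $\varphi \circ (\varphi^{\sigma})^{-1}$, which leaves the moment map invariant by \eqref{eq:phisigmacomp}. The only difference is that you spell out the computation $\mu \circ \varphi \circ (\varphi^{\sigma})^{-1} = \mu$, which the paper leaves implicit.
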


\proof By the construction of $\varphi^{\sigma}$, we have $\mu \circ \varphi^{\sigma} = \sigma \circ \mu$ and hence we can apply Lemma~\ref{lem:equivariantsympl} to the symplectomorphism $\varphi \circ (\varphi^{\sigma})^{-1}$ to prove the claim.
\proofend

\begin{proposition}
Let $R$ be a compatible antisymplectic involution on $M$ such that $\mu \circ R = \sigma \circ \mu$, for $\sigma \in \mathcal{S}_{\Delta}$. Then there is a smooth map $\theta : \Delta \rightarrow T^n$ such that 
	\begin{equation}
		\label{eq:rrsigma}
		R =  \psi_{\theta} \circ R^{\sigma}.
	\end{equation}
Furthermore, $\theta$ satisfies $R^{\sigma}_{T^n} (\theta ( \sigma(x))) = \theta(x)^{-1}$ for all $x \in \Delta$.
\end{proposition}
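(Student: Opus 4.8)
The plan is to follow the proof of the previous proposition for~\eqref{eq:rrsigma}, and then to extract the extra identity from the involutivity $R^2 = \id$. First I would apply Lemma~\ref{lem:equivariantsympl} to the composition $R \circ R^{\sigma}$. This map is a symplectomorphism, being a composition of the two antisymplectic maps $R$ and $R^{\sigma}$, and it leaves the moment map invariant, because
\[
\mu \circ R \circ R^{\sigma} = \sigma \circ \mu \circ R^{\sigma} = \sigma \circ \sigma \circ \mu = \mu,
\]
using $\mu \circ R = \sigma \circ \mu$, equation~\eqref{eq:comprsigma}, and $\sigma^2 = \id$. Hence $R \circ R^{\sigma} = \psi_{\theta}$ for some smooth $\theta \colon \Delta \to T^n$, and composing on the right with $(R^{\sigma})^{-1} = R^{\sigma}$ gives $R = \psi_{\theta} \circ R^{\sigma}$.

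For the stated identity on $\theta$, the idea is to write $R^2$ in the form $\psi_{\eta}$ and observe that $\psi_{\eta} = \id$ forces $\eta \equiv 1$. The key intermediate step is to rewrite the conjugate $R^{\sigma} \circ \psi_{\theta} \circ R^{\sigma}$ as a single fibre rotation. Using $\psi_{\theta}(p) = \theta(\mu(p)).p$, the compatibility relation $R^{\sigma}(t.q) = R^{\sigma}_{T^n}(t).R^{\sigma}(q)$, the identity $\mu \circ R^{\sigma} = \sigma \circ \mu$, and $(R^{\sigma})^2 = \id$, one computes
\[
R^{\sigma} \circ \psi_{\theta} \circ R^{\sigma}(p) = R^{\sigma}_{T^n}\bigl(\theta(\sigma(\mu(p)))\bigr).p,
\]
so that $R^{\sigma} \circ \psi_{\theta} \circ R^{\sigma} = \psi_{\theta'}$ with $\theta'(x) = R^{\sigma}_{T^n}(\theta(\sigma(x)))$. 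Since $\mu$ is $T^n$-invariant one has $\psi_{\theta} \circ \psi_{\theta'} = \psi_{\theta \cdot \theta'}$, where $\theta \cdot \theta'$ denotes the pointwise product in $T^n$; therefore
\[
\id = R^2 = \psi_{\theta} \circ R^{\sigma} \circ \psi_{\theta} \circ R^{\sigma} = \psi_{\theta} \circ \psi_{\theta'} = \psi_{\theta \cdot \theta'}.
\]

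Finally I would argue that $\psi_{\eta} = \id$ implies $\eta \equiv 1$: on the open dense subset of $M$ where $T^n$ acts freely, whose moment image is the interior $\mathring{\Delta}$, the equality $\eta(\mu(p)).p = p$ forces $\eta(\mu(p)) = 1$, and then $\eta \equiv 1$ on all of $\Delta$ by continuity, since $\mathring{\Delta}$ is dense in $\Delta$. Applying this with $\eta = \theta \cdot \theta'$ gives $\theta(x)\, R^{\sigma}_{T^n}(\theta(\sigma(x))) = 1$ for every $x \in \Delta$, which is exactly $R^{\sigma}_{T^n}(\theta(\sigma(x))) = \theta(x)^{-1}$. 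I expect the main obstacle to be the bookkeeping in the second paragraph, namely correctly propagating the torus automorphism $R^{\sigma}_{T^n}$ through the conjugation $R^{\sigma} \circ \psi_{\theta} \circ R^{\sigma}$; the density argument in the last step is routine, and the first paragraph is essentially a copy of the preceding proof.
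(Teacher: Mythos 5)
Your proposal is correct and follows essentially the same route as the paper: apply Lemma~\ref{lem:equivariantsympl} to $R\circ R^{\sigma}=R\circ(R^{\sigma})^{-1}$ to get $R=\psi_{\theta}\circ R^{\sigma}$, then expand $\id=R^2=\psi_{\theta}\circ R^{\sigma}\circ\psi_{\theta}\circ R^{\sigma}$ into a single fibre rotation by $\theta(x)\,R^{\sigma}_{T^n}(\theta(\sigma(x)))$ and conclude by freeness of the $T^n$-action on an open dense set. Your version merely makes explicit the conjugation bookkeeping and the density/continuity step that the paper leaves implicit.
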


\proof Again, apply Lemma~\ref{lem:equivariantsympl} to the symplectomorphism $R \circ (R^{\sigma})^{-1}$, to prove the first claim. For the condition on $\theta$, we use~\eqref{eq:rrsigma} and compute,
\begin{equation*}
p = R(R(p)) = (\psi_{\theta} \circ R^{\sigma} \circ \psi_{\theta} \circ R^{\sigma}) (p) = (\theta(x) R^{\sigma}_{T^n}(\theta(\sigma(x))).p.
\end{equation*}
Since $T^n$ acts freely on an open dense set in $M$, the claim follows. 
\proofend

\begin{remark}
Let $R_1,R_2 \colon M \rightarrow M$ be two compatible antisymplectic involutions satisfying $\mu \circ R = \sigma \circ \mu$, for some $\sigma \in \mathcal{S}_{\Delta}$. Even though they are related by~$(\ref{eq:rrsigma})$, their respective fixed point sets may have different topology. For example, take $M=\C P^3$ equipped with its standard toric structure and define 
\begin{eqnarray*}
	R_1[z_0:z_1:z_2:z_3] &=& [\bar{z}_1:\bar{z}_0:\bar{z}_3:\bar{z}_2], \\
	R_2[z_0:z_1:z_2:z_3] &=& [-\bar{z}_1:\bar{z}_0:-\bar{z}_3:\bar{z}_2].
\end{eqnarray*}
Both $R_1$ and $R_2$ are compatible with~$\sigma(x,y,z)=(-x-y-z,z,y)$, but~$\Fix R_1 \cong \R P^3$ and~$\Fix R_2 = \varnothing$. See Example~\ref{ex: cp3} and~\cite[Example 2.4]{Kim} for details.
\end{remark}


\section{Real Delzant construction}\label{sec: realdelconst}
In this section, we describe in detail the real Delzant construction stated in Section~\ref{sec: intro}.
Let $(M^{2n},\ow,\mu)$ be a toric symplectic manifold with moment polytope $\Delta=\mu(M)$. 
By the Delzant construction (Section \ref{sec: delzantconst}), we can write
$$
M=\nu^{-1}(0)/K,
$$
where $\nu$ is defined as in \eqref{eq: numap} and $\pi\colon T^k\to T^n$ is the characteristic map with kernel $K=\ker \pi$.  
Let $R=R^{\sigma}$ be the standard antisymplectic involution on $M$ given by the lift of an involution $\sigma\in \mathcal{S}_\Delta$ from Theorem \ref{thm:liftanti}. By Proposition \ref{prop:compatibility} there is a group involution $R_{T^n}$ of $T^n$ satisfying \eqref{eq:comp1}. Recall that
$$
\rho_{T^k}(t_1,\dots,t_k)=(t^{-1}_{\tau(1)},\dots,t^{-1}_{\tau(k)})
$$ is the group involution of $T^k$ defined in equation~\eqref{eq:rhot}. Here $\tau\in S_k$ is the permutation satisfying \eqref{eq:tau}. By Equation~\ref{eq:antigroupcomp}, we see that $\pi\big(\Fix(\rho_{T^k})\big) \subset \Fix(R_{T^n})$ and thus we can define



\begin{definition}
The {\bf characteristic map} $\pi_R$ associated to  $L=\Fix(R)$ is the group homomorphism defined as the restriction 
$$
\pi_R:=\pi|_{\Fix(\rho_{T^k})}\colon \Fix(\rho_{T^k})\to \Fix(R_{T^n}).
$$
\end{definition}
We write $K_R := \ker \pi_R = K\cap \Fix(\rho_{T^k})$ and denote its Lie algebra by $\mathfrak{k}_R$. Recall that
$$
\rho(z_1,\dots,z_k)=(\bar{z}_{\tau(1)},\dots,\bar{z}_{\tau(k)})
$$ is the antisymplectic involution of $\C^k$ given in~\eqref{eq:rhodef}.
We construct the real analogue of the moment map $\nu\colon \C^k\to \mathfrak{k}^*$ defined in \eqref{eq: numap} as follows. We define
$$
\nu_R\colon \Fix(\rho)\to (\mathfrak{k}/\mathfrak{k}_R)^*, \quad \nu_R(z)[\xi]_{\mathfrak{k}_R}:=\langle \nu(z),\xi \rangle \quad \text{for $[\xi]_{\mathfrak{k}_R}\in \mathfrak{k}/\mathfrak{k}_R$}. 	
$$
It follows from Lemma \ref{lem: nuRwelldef} below that $\nu_R$ is well-defined. Recall from Section \ref{ssec:liftantisymp} that $\rho_{T^k}$ preserves $K$ and hence we write
$$
\rho_K:=\rho_{T^k}|_K\colon K\to K
$$
for the group involution on $K$ with $\Fix(\rho_K)=K_R$. One checks that $\Fix((\rho_K)_*)=\mathfrak{k}_R$ and $\Fix(\rho_K^*)=\mathfrak{k}_R^*$. 
\begin{lemma}\label{lem: nuRwelldef}
We have $\langle\nu(z), \xi\rangle=0$ for all $z\in \Fix(\rho)$ and $\xi\in \mathfrak{k}_R$.
\end{lemma}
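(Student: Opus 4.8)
The plan is to unravel the definitions of $\nu = j^*\circ\nu_0$, of the involution $\rho$, and of the Lie subalgebra $\mathfrak{k}_R = \Lie(\Fix(\rho_K))$, and then to exploit the $\rho$-invariance of $z$ together with the anti-invariance of $\nu_0$ under $\rho$. Concretely, for $z \in \Fix(\rho)$ and $\xi \in \mathfrak{k}$, by definition $\langle \nu(z),\xi\rangle = \langle j^*\nu_0(z),\xi\rangle = \langle \nu_0(z), j_*\xi\rangle$, where $j_* : \mathfrak{k}\hookrightarrow \mathfrak{t}^k$ is the inclusion. Writing $\nu_0(z) = \tfrac12(|z_1|^2,\dots,|z_k|^2) - (\kappa_1,\dots,\kappa_k)$, I would first record how $\rho^0_{\mathfrak{t}^k}$ (the differential of $t\mapsto t^{-1}$, i.e.\ $\xi\mapsto -\xi$) and the coordinate permutation $\tau$ act, so that the relation $\nu_0\circ\rho = (\rho_{T^k}^{-1})^* \circ \nu_0$ from Proposition~\ref{prop:compatibility} (applied to the antisymplectic $\rho$) reads, componentwise, $\tfrac12|z_{\tau(i)}|^2 - \kappa_i = -(\tfrac12|z_{\tau^{-1}(i)}|^2 - \kappa_{\tau^{-1}(i)})$ or the analogous identity; since $z\in\Fix(\rho)$ means $z_i = \bar z_{\tau(i)}$, hence $|z_i| = |z_{\tau(i)}|$, and since $\kappa_i = \kappa_{\tau(i)}$ by~\eqref{eq:tau}, this pins down exactly the constraint that $\nu_0(z)$ lies in the $(-1)$-eigenspace of $(\rho_{T^k}^{-1})^* = (\rho_{T^k})^*$ acting on $(\mathfrak{t}^k)^*$.

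Next I would transfer this to $\mathfrak{k}^*$. The map $j^*:(\mathfrak{t}^k)^*\to\mathfrak{k}^*$ intertwines $(\rho_{T^k})^*$ with $(\rho_K)^*$ (because $j\circ\rho_K = \rho_{T^k}\circ j$ since $\rho_{T^k}$ preserves $K$), so $\nu(z) = j^*\nu_0(z)$ lies in the $(-1)$-eigenspace of $(\rho_K)^*$ on $\mathfrak{k}^*$. On the other hand, $\mathfrak{k}_R = \Fix((\rho_K)_*)$ is the $(+1)$-eigenspace of $(\rho_K)_*$ on $\mathfrak{k}$. Since $(\rho_K)_*$ is an involution of the finite-dimensional space $\mathfrak{k}$ and $(\rho_K)^*$ is its adjoint on $\mathfrak{k}^*$, the $(+1)$-eigenspace of $(\rho_K)_*$ and the $(-1)$-eigenspace of $(\rho_K)^*$ annihilate each other under the natural pairing: for $\eta$ with $(\rho_K)^*\eta = -\eta$ and $\xi$ with $(\rho_K)_*\xi = \xi$, one has $\langle\eta,\xi\rangle = \langle (\rho_K)^*\eta,(\rho_K)_*\xi\rangle \cdot(-1)^{?}$—more carefully, $\langle\eta,\xi\rangle = \langle(\rho_K)^*\eta,\xi\rangle$ is false in general; rather $\langle(\rho_K)^*\eta,\xi\rangle = \langle\eta,(\rho_K)_*\xi\rangle$, giving $-\langle\eta,\xi\rangle = \langle\eta,\xi\rangle$, hence $\langle\eta,\xi\rangle = 0$. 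Applying this with $\eta = \nu(z)$ and $\xi\in\mathfrak{k}_R$ yields $\langle\nu(z),\xi\rangle = 0$, which is the claim.

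The main obstacle, and the step I would write out most carefully, is the bookkeeping in the first paragraph: correctly tracking how the antisymplectic inversion on $T^k$ and the permutation $\tau$ combine in $\rho_{T^k}$, and then being precise about which eigenspace $\nu_0(z)$ — and hence $\nu(z)$ — lands in, since a sign error there collapses the whole argument. Everything after that is the soft linear-algebra fact that for an involution $A$ of a finite-dimensional vector space $V$, the $(+1)$-eigenspace of $A$ and the $(-1)$-eigenspace of the transpose $A^\ast$ on $V^\ast$ are mutual annihilators; this is immediate from $\langle A^\ast\eta,\xi\rangle = \langle\eta,A\xi\rangle$. I would also note that the identification $\Fix(\rho_K^*) = \mathfrak{k}_R^*$ stated just before the lemma is exactly the decomposition $\mathfrak{k}^* = \mathfrak{k}_R^* \oplus (\text{$(-1)$-eigenspace})$ dual to $\mathfrak{k} = \mathfrak{k}_R \oplus (\text{$(-1)$-eigenspace})$, so the lemma can alternatively be phrased as: $\nu(z)$ lies in the $(-1)$-eigenspace, which is precisely the annihilator of $\mathfrak{k}_R$. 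This reformulation makes the well-definedness of $\nu_R$ transparent.
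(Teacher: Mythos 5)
Your proposal is correct and follows essentially the same route as the paper: the paper's proof is exactly the chain $\nu(z)=\nu(\rho(z))=-(j^*\circ\rho_{T^k}^*\circ\nu_0)(z)=-(\rho_K^*\circ j^*\circ\nu_0)(z)=-\rho_K^*(\nu(z))$, followed by the same pairing argument between the $(+1)$-eigenspace of $(\rho_K)_*$ and the $(-1)$-eigenspace of $\rho_K^*$ that you give at the end. The one caveat is the sign in your displayed compatibility relation --- Proposition~\ref{prop:compatibility} with $\epsilon(\rho)=-1$ gives $\nu_0\circ\rho=-\rho_{T^k}^*\circ\nu_0$, not $\nu_0\circ\rho=(\rho_{T^k}^{-1})^*\circ\nu_0$ --- but since the inversion on $T^k$ contributes its own minus sign inside $\rho_{T^k}^*$, your conclusion that $\nu_0(z)$, and hence $\nu(z)$, lies in the $(-1)$-eigenspace is exactly right, and the rest of the argument goes through as in the paper.
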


\begin{proof}
Let $z\in \Fix(\rho)$. Using Proposition \ref{prop:compatibility}, we verify that
\begin{align*}
\nu(z) &= \nu(\rho(z))\\
&= (j^* \circ \nu_0\circ \rho)(z)\\
&= -(j^* \circ \rho_{T^k}^* \circ \nu_0)(z)\\
&= -(\rho_K^* \circ j^*\circ \nu_0)(z)\\
&= -\rho_K^* (\nu (z)).
\end{align*}
Now, for any $\xi\in \mathfrak{k}_R$ we see that
$$
\langle\nu(z), \xi\rangle=-\langle \rho_K^*\big(\nu(z)\big),\xi\rangle=-\langle\nu(z),(\rho_K)_*(\xi)\rangle=-\langle\nu(z),\xi\rangle,
$$
which yields $\langle\nu(z),\xi\rangle=0$.
\end{proof}
We observe that 
$$
\nu^{-1}_R(0) =\nu^{-1} (0) \cap \Fix(\rho)=\Fix(\rho|_{\nu^{-1}(0)}).
$$
Since $\nu_R^{-1}(0)$ is given by the fixed point set of the involution $\rho|_{\nu^{-1}(0)}$ and $\nu^{-1}(0)$ is compact, $\nu_R^{-1}(0)$ is a closed submanifold of $\nu^{-1}(0)$.
\begin{remark}
In the spirit of the Delzant construction, one can also prove that $0\in (\mathfrak{k}/\mathfrak{k}_R)^*$ is a regular value of $\nu_R\colon \Fix(\rho)\to (\mathfrak{k}/\mathfrak{k}_R)^*$. Hence, $\nu_R^{-1}(0)$ is a closed submanifold of $\nu^{-1}(0)$ of dimension $n+\dim K_R$.
\end{remark}
Since the map $\nu$ is $T^k$-invariant and $K$ acts freely on $\nu^{-1}(0)$, the action of $K_R$ on $\nu_R^{-1}(0)$ is well-defined and free. As a result, the quotient $\nu_R^{-1}(0)/K_R$ is a closed manifold. The natural inclusion
$$
\nu_R^{-1}(0)\longhookrightarrow \nu^{-1}(0)
$$
induces the well-defined smooth map on the quotients
$$
I\colon \nu_R^{-1}(0)/K_R\to \nu^{-1}(0)/K,\quad I({K_R}z) = Kz.
$$
Recall that $M=\nu^{-1}(0)/K$.
\begin{lemma}
The map $I\colon \nu_R^{-1}(0)/K_R\to \nu^{-1}(0)/K$ is an embedding.
\end{lemma}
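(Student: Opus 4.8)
The plan is to show that $I$ is an injective immersion which is moreover a homeomorphism onto its image; since $\nu_R^{-1}(0)/K_R$ is compact and $\nu^{-1}(0)/K = M$ is Hausdorff, properness is automatic, so it suffices to establish that $I$ is an injective immersion. We already know that $\nu_R^{-1}(0)/K_R$ is a closed manifold and that $I$ is a well-defined smooth map, so the content is injectivity and immersivity.

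First I would prove injectivity. Suppose $I(K_R z) = I(K_R z')$, i.e.\ $Kz = Kz'$ for $z, z' \in \nu_R^{-1}(0) = \nu^{-1}(0) \cap \Fix(\rho)$. Then there is $t \in K$ with $z' = t.z$. Applying $\rho$ and using that $\rho$ is $K$-compatible with group involution $\rho_K = \rho_{T^k}|_K$, we get $z' = \rho(z') = \rho(t.z) = \rho_K(t).\rho(z) = \rho_K(t).z$, so $\rho_K(t).z = t.z$, hence $\rho_K(t)^{-1} t$ fixes $z$. Since $K$ acts freely on $\nu^{-1}(0)$, this forces $\rho_K(t) = t$, i.e.\ $t \in \Fix(\rho_K) = K_R$. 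Therefore $K_R z = K_R z'$, and $I$ is injective.

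Next I would prove that $I$ is an immersion. The tangent space to $\nu_R^{-1}(0)/K_R$ at $K_R z$ is $T_z(\nu_R^{-1}(0)) / T_z(K_R z)$, and similarly for the target. It is enough to check that the differential of the inclusion $\nu_R^{-1}(0) \hookrightarrow \nu^{-1}(0)$ sends a vector tangent to a $K_R$-orbit iff it was already such — concretely, that $T_z(\nu_R^{-1}(0)) \cap T_z(Kz) = T_z(K_R z)$ inside $T_z \nu^{-1}(0)$. The inclusion $\supseteq$ is clear since $K_R \subset K$. For $\subseteq$, a vector in the intersection has the form $(X_\xi)_z$ for some $\xi \in \mathfrak{k}$ (tangent to the $K$-orbit) and also lies in $T_z \Fix(\rho|_{\nu^{-1}(0)})$, which is the $+1$-eigenspace of $d(\rho|_{\nu^{-1}(0)})_z$. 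Using the compatibility $\rho_* (X_\xi) = X_{(\rho_K)_* \xi} \circ \rho$ (the infinitesimal form of $\rho$ being $K$-compatible, cf.\ the computation in Lemma~\ref{lem: nuRwelldef}) together with $\rho(z) = z$, one gets $d\rho_z((X_\xi)_z) = (X_{(\rho_K)_*\xi})_z$; requiring this to equal $(X_\xi)_z$ and invoking freeness of the $K$-action gives $(\rho_K)_* \xi = \xi$, so $\xi \in \Fix((\rho_K)_*) = \mathfrak{k}_R$ and $(X_\xi)_z \in T_z(K_R z)$. This identification of tangent spaces shows $dI$ is injective at every point.

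The main obstacle I expect is the immersion step, specifically being careful that "tangent to a $K$-orbit" is detected correctly: one must use freeness of the $K$-action on $\nu^{-1}(0)$ to conclude that the map $\mathfrak{k} \to T_z(Kz)$, $\xi \mapsto (X_\xi)_z$, is injective, so that the eigenvalue equation on vector fields descends to the eigenvalue equation $(\rho_K)_*\xi = \xi$ on $\mathfrak{k}$. Everything else — well-definedness of $I$, compactness of source, Hausdorffness of target, and hence that an injective immersion from a compact manifold is an embedding — is either already in the excerpt or standard. Once injectivity and immersivity are in hand, $I$ is a proper injective immersion, hence an embedding, completing the proof.
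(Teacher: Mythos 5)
Your proposal is correct and follows essentially the same route as the paper: reduce to injectivity plus injectivity of the differential via compactness, prove injectivity by applying $\rho$ to $z'=t.z$ and invoking freeness of the $K$-action to get $t\in\Fix(\rho_K)=K_R$, and prove the immersion property via the tangent-space identity $T_zK_Rz=T_zKz\cap T_z\nu_R^{-1}(0)$, using the infinitesimal compatibility of $\rho$ with $\rho_K$ and freeness to pass from $\rho_*X=X$ to $(\rho_K)_*\xi=\xi$. No substantive differences from the paper's argument.
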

\begin{proof}
Since $\nu_R^{-1}(0)/K_R$ is compact, it suffices to show that $I$ and its differential are injective.

Let $z_1,z_2\in \nu_R^{-1}(0)$ such that $I(K_Rz_1)=I(K_Rz_2)$. Then there is an element $t\in K$ such that $t.z_1=z_2$. Applying the involution $\rho$ to both sides, we get
$$
\rho_K (t). z_1=\rho_K (t).\rho(z_1)=\rho(z_2)=z_2=t.z_1.
$$
Since $K$ acts freely on $\nu^{-1}(0)$, we conclude that $\rho_K(t)=t$ and hence $t\in \Fix(\rho_K)=K_R$. Therefore $K_Rz_1=K_Rz_2$.

We are left with showing that the differential of $I$ is injective. Notice that
$$
T_{z} \big(\nu_{R}^{-1}(0)/K_R\big) \cong T_z \nu^{-1}_{R}(0)/T_z K_Rz
$$
 and 
$$
T_{z} \big(\nu^{-1}(0)/K \big) \cong T_z \nu^{-1}(0)/T_z Kz
$$
 for all $z\in \nu^{-1}_{R}(0)$.
In order to prove that the differential of $I$ is injective, it suffices to prove that~$T_{z} \big(\nu_{R}^{-1}(0)/K_R\big)$ is a subspace of~$T_{z} \big(\nu^{-1}(0)/K \big)$, which follows from
\begin{equation} \label{eq:tangentspace}
T_z K_Rz =T_z Kz \cap T_z \nu^{-1}_R (0).
\end{equation}
To prove this identity, consider the following representations of tangent spaces
\begin{align*}
T_z K_Rz &= \bigg\{X\in T_z \nu^{-1}_R (0)\mid X=\frac{d}{dt} \exp(t\xi).z,\  \xi\in \mathfrak{k}_R\bigg\}\\
&=\bigg\{X\in T_z \nu^{-1}(0) \mid X=\frac{d}{dt} \exp(t\xi).z,\  \xi\in \mathfrak{k}_R \text{ and } \rho_* X=X\bigg\},
\end{align*}
$$
T_z Kz \cap T_z \nu^{-1}_R (0) = \bigg\{X\in T_z \nu^{-1}(0) \mid X=\frac{d}{dt} \exp(t\xi).z,\ \xi\in \mathfrak{k} \text{ and } \rho_* X=X\bigg\}.
$$
{\bf Claim.} \emph{Let $\xi\in \mathfrak{k}$ and $X=\frac{d}{dt}\exp(t\xi).z$. If $\rho_*X=X$, then $(\rho_K)_* \xi=\xi$.} 

By direct computation, we verify that 
$$
\frac{d}{dt}\exp(t\xi).z=\frac{d}{dt}\exp(({\rho_K})_* t\xi).z.
$$
Since the action of $K$ on $\nu^{-1}(0)$ is free, we have that $(\rho_K)_*\xi=\xi$, and so the claim follows.
\end{proof}
We are now in a position to prove the main theorem of the paper.
\begin{proof}[Proof of Theorem \ref{thm: realdelzant}] Since we already know that $I\colon \nu_R^{-1}(0)/K_R\to M$ is an embedding, it suffices to show that $\Fix(R)=I(\nu_R^{-1}(0)/K_R)$. We prove this claim by double inclusion. Recall from Section \ref{ssec:liftantisymp} that $\rho$ is $K$-compatible with $\rho_{T^k}$, whence $R\colon M\to M$ is given by
$$
R(Kz) = K\rho(z)\quad \text{for $z\in \nu^{-1}(0)$}.
$$
Let $K_Rz\in \nu_R^{-1}(0)/K_R$. Since $\rho(z)=z$, we have $R(Kz)=Kz=I(K_Rz)$. This shows that $I(\nu_R^{-1}(0)/K_R)\subset \Fix(R)$, and hence $\Fix(R)$ is not empty. To prove the other inclusion, let $Kz\in \Fix(R)$. Since $R(Kz)=K\rho(z)=Kz$, there exists $t\in K$ such that $\rho(z) = t.z$. We observe that
$$
z=\rho(t.z) = \rho_K(t).\rho(z) = \rho_K(t)t.z.
$$
Since the $K$-action is free, we have $\rho_K(t)=t^{-1}$. Recalling that $K$ is a subtorus, we can choose $\tilde{t}\in K$ such that $\tilde{t}^2=t$ and $\rho_K(\tilde{t})=\tilde{t}^{-1}$. Finally, we get
$$
\rho(\tilde{t}.z) = \rho_K(\tilde{t}).\rho(z) = \rho_K(\tilde{t})t.z = \rho_K(\tilde{t})\tilde{t}^2.z = \tilde{t}.z.
$$
Hence, $Kz=K(\tilde{t}.z)=I(K_R(\tilde{t}.z))$.
This implies that $\Fix(R)\subset I(\nu_R^{-1}(0)/K_R)$, which completes the proof.
\end{proof}

\begin{example} \label{ex: cp3} Consider complex projective space $\C P^3$ with moment map
$$
\mu[z_0:z_1:z_2:z_3] = \frac{4}{\|z\|^2}(|z_1|^2,|z_2|^2,|z_3|^2)-(1,1,1),
$$
where $\|z\|^2=\sum_{j=0}^3|z_j|^2$. We subtract $(1,1,1)$ in order for the normalization $\int_{\C P^3} \mu \omega^3 =0$ to hold. The moment polytope is the 3-simplex given as the convex hull of the vectors
$$ \left\{ (-1,-1,-1),(-1,-1,3),(-1,3,-1),(3,-1,-1) \right\}.  $$ 
Let $\sigma \in \mathcal{S}_{\Delta}$ be the involution defined by $\sigma(x,y,z)=(-x-y-z,z,y)$. Its standard lift is given by the antisymplectic involution
	$$
	R^{\sigma}[z_0:z_1:z_2:z_3]=[\bar{z}_1:\bar{z}_0:\bar{z}_3:\bar{z}_2].
	$$
Using Theorem \ref{thm: realdelzant}, we verify that $\Fix(R^{\sigma})$ is diffeomorphic to $\R P^3$. Figure \ref{fig:cp3} describes the fixed point set of $\sigma$ in the moment polytope $\Delta$ of $\C P^3$.
\begin{figure*}[h]
\begin{center}
\begin{tikzpicture}[scale=1.1]
\draw [fill=blue!8] (0,0)--(0,1)--(1,-0.58)--(0,0);
\draw [fill=blue!4] (0,1)--(1.5,0)--(1,-0.58)--(0,1);
\draw [thick, dashed] (0,0)--(1.5,0);
\draw [thick, red!70] (0.49,-0.29)--(0.75,0.5);
\draw [thick] (0,0)--(0,1)--(1.5,0)--(1,-0.58)--(0,0);
\draw [thick] (0,1)--(1,-0.58);

\begin{scope}[shift={(0.62,0.1)}]
\draw [rotate=-30,dashed] (0,0)--(0.75,0);
\draw [rotate=-30,->] (0.75,0)--(1.8,0);
\draw [dashed] (0,0)--(0.75,0);
\draw [->] (0.75,0)--(2,0);
\draw [dashed] (0, 0)--(0, 0.5);
\draw [->] (0,0.5)--(0,1.5);
\end{scope}

\end{tikzpicture}
\end{center}
\caption{$\Fix(\sigma)$ in $\Delta$.}
\label{fig:cp3}
\end{figure*}
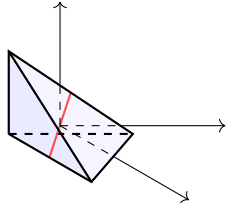
\noindent
We take the primitive outward pointing normal vectors of each facet of $\Delta$,
$$
v_1 = (0, -1, 0),\quad 
	v_2 = (0, 0, -1),\quad 
	v_3 = (-1, 0, 0),\quad 
	v_4 = (1,1,1),
$$
with $\kappa_1=\dots=\kappa_4=1$, and hence
$$
\pi=\begin{pmatrix}
	0 & 0 & -1 & 1 \\
	-1 & 0 & 0 & 1 \\
	0 & -1 & 0 & 1
\end{pmatrix}.
$$
Since $\sigma^*v_1 = v_2$ and $\sigma^*v_3 = v_4$, we obtain two involutions
\begin{eqnarray*}
	\rho_{T^4}(t_1,t_2,t_3,t_4) &=& (t_2^{-1}, t_1^{-1}, t_4^{-1}, t_3^{-1}), \\
	\rho(z_1,z_2,z_3,z_4) &=& (\bar{z}_2, \bar{z}_1, \bar{z}_4, \bar{z}_3).
\end{eqnarray*}
One can direct check that
\begin{eqnarray*}
	 \Fix(\rho_{T^4}) &=& 
	 \{(t,t^{-1},s,s^{-1})\mid t,s\in S^1 \},\\
\Fix(\rho) &=&  \{(z,\bar{z},w,\bar{w}) \mid z,w\in \C\}.	 
\end{eqnarray*}
Since
\begin{eqnarray*}
	K &=& \ker \pi = \{(t,t,t,t)\mid t\in S^1\}\cong S^1,\\
	\nu^{-1}(0) &=& \left\{ (z_1,z_2,z_3,z_4) \mid \sum_{j=1}^4|z_j|^2 = 2 \right\},
\end{eqnarray*}
we obtain
\begin{eqnarray*}
	K_R &=& K\cap \Fix(\rho_{T^4})=\{ (1,1,1,1), (-1,-1,-1,-1) \}\cong \Z_2, \\
	\nu_R^{-1}(0) &=& \nu^{-1}(0)\cap \Fix(\rho)=\left\{(z,\bar{z},w,\bar{w})\mid |z|^2 +|w|^2 = 1\right\}\cong S^3.
\end{eqnarray*}
Since $K_R$ acts by the antipodal action on $\nu_R^{-1}(0)$, we deduce that $\nu_R^{-1}(0)/K_R \cong \R P^3$.
\end{example}

\section{Convexity and Tightness}
In this section we shall prove Theorem \ref{thm: tightconvex}. We follow the same setup as in Section~\ref{sec: realdelconst}. Let $(M,\ow,\mu)$ be a toric symplectic manifold with moment polytope $\Delta=\mu(M)$. By the Delzant construction, we can write $M=\nu^{-1}(0)/K$. Suppose that $R$ is the antisymplectic involution of $M$ which is the lift of an involution $\sigma\in \mathcal{S}_{\Delta}$, see Theorem \ref{thm:liftanti}.

We know that $\Fix(\sigma)=\Delta \cap \{x\in \mathfrak{t}^*\mid \sigma(x)=x\}$ and that the Delzant polytope is convex.  Since the intersection of two convex sets is again convex, so is $\Fix(\sigma)$.
\begin{theorem}\label{thm: convexity}
We have $\Fix(\sigma)=\mu(L)$. In particular, $L$ is nonempty and $\mu(L)$ is convex. 
\end{theorem}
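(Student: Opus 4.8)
The convexity of $\Fix(\sigma)$ has already been recorded, so what remains is the identity $\Fix(\sigma)=\mu(L)$ together with the assertion that $L\neq\varnothing$; the latter I would deduce from the former once the identity is established. The inclusion $\mu(L)\subseteq\Fix(\sigma)$ is immediate from the compatibility relation $\mu\circ R^{\sigma}=\sigma\circ\mu$ of $\eqref{eq:comprsigma}$: if $p\in L=\Fix(R^{\sigma})$ then $\mu(p)=\mu(R^{\sigma}(p))=\sigma(\mu(p))$, and since $\mu(p)\in\Delta$ this gives $\mu(p)\in\Delta\cap\{x\mid\sigma(x)=x\}=\Fix(\sigma)$. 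The substance is the reverse inclusion $\Fix(\sigma)\subseteq\mu(L)$, and for this I would pass to the Delzant model $M=\nu^{-1}(0)/K$ and use that, by Proposition~\ref{prop:symplecticred} (as recalled in the proof of Theorem~\ref{thm: realdelzant}), the lifted involution is given by $R^{\sigma}(Kz)=K\rho(z)$, so that every $\rho$-fixed point $z\in\nu^{-1}(0)$ produces a point $Kz\in\Fix(R^{\sigma})=L$.

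So fix $x\in\Fix(\sigma)$. Since $x\in\Delta=\mu(M)$ and the Delzant moment map is characterized by $\pi^{*}\circ\mu\circ p=\nu_{0}\circ\iota$, there is a point $z\in\nu^{-1}(0)$ with $\nu_{0}(z)=\pi^{*}(x)$. Because each coordinate of $\nu_{0}$ depends only on the modulus of the corresponding entry of $z$, I may replace $z$ by the point with the same moduli and all entries real and non-negative, keeping the name $z$; then $z\in\nu^{-1}(0)$, $\mu(Kz)=x$ (by injectivity of $\pi^{*}$), and $\bar z_{j}=z_{j}$ for every $j$. It remains to see that $z\in\Fix(\rho)$, i.e.\ that $z_{j}=\bar z_{\tau(j)}=z_{\tau(j)}$ for every $j$, where $\tau\in S_{k}$ is the permutation of $\eqref{eq:tau}$ and $\rho$ is as in $\eqref{eq:rhodef}$. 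Comparing the coordinates $j$ and $\tau(j)$ of the vector equality $\nu_{0}(z)=\pi^{*}(x)$ yields
\[
\tfrac12|z_{j}|^{2}-\kappa_{j}=\langle x,v_{j}\rangle,\qquad \tfrac12|z_{\tau(j)}|^{2}-\kappa_{\tau(j)}=\langle x,v_{\tau(j)}\rangle ,
\]
and $\eqref{eq:tau}$ gives $\kappa_{\tau(j)}=\kappa_{j}$ and $v_{\tau(j)}=(\sigma^{-1})^{*}v_{j}$, whence $\langle x,v_{\tau(j)}\rangle=\langle\sigma^{-1}(x),v_{j}\rangle=\langle\sigma(x),v_{j}\rangle=\langle x,v_{j}\rangle$, using that $\sigma$ is an involution and that $x\in\Fix(\sigma)$. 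Therefore $|z_{\tau(j)}|^{2}=|z_{j}|^{2}$, and as both entries are real and non-negative, $z_{\tau(j)}=z_{j}$. Hence $z\in\nu^{-1}(0)\cap\Fix(\rho)$, so $Kz\in L$ and $x=\mu(Kz)\in\mu(L)$.

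Combining the two inclusions gives $\mu(L)=\Fix(\sigma)$. Since $\sigma$ is linear and $0\in\Delta$ by the normalization of Remark~\ref{rk:momentnorm}, the origin lies in $\Fix(\sigma)$, so $\Fix(\sigma)\neq\varnothing$ and therefore $L=\Fix(R^{\sigma})\neq\varnothing$; convexity of $\mu(L)=\Fix(\sigma)$ was observed before the statement. I expect the only genuinely delicate point to be the step of choosing a representative $z$ with real non-negative coordinates over $x$ and checking it is $\rho$-fixed: a priori the smooth involution $\rho|_{\nu^{-1}(0)}$ on the nonempty compact manifold $\nu^{-1}(0)$ need not have any fixed point (compare the antipodal map on a sphere), and it is precisely the combinatorial symmetry $\eqref{eq:tau}$ of the facet data under $\sigma$ that forces the ``all-positive'' lift of each $x\in\Fix(\sigma)$ to be fixed by $\rho$. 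Everything else is routine bookkeeping with the Delzant reduction diagram.
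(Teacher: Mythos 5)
Your proof is correct, but for the substantive inclusion $\Fix(\sigma)\subseteq\mu(L)$ it takes a genuinely different route from the paper. The paper stays on $M$: it picks an \emph{arbitrary} point $Kz$ in the fibre over $x\in\Fix(\sigma)$, observes that $R^{\sigma}(Kz)$ lies in the same fibre and hence equals $t.Kz$ for some $t\in T^n$, deduces $R^{\sigma}_{T^n}(t)t\in\Stab(Kz)$, and then extracts a square root $\tilde t$ of $t$ with $R^{\sigma}_{T^n}(\tilde t)\tilde t\in\Stab(Kz)$, so that the translated point $\tilde t.Kz$ is $R^{\sigma}$-fixed. You instead descend to the Delzant model and exhibit an explicit $\rho$-fixed point over each $x\in\Fix(\sigma)$, namely the representative with all coordinates real and non-negative, verifying $\rho$-invariance directly from the facet symmetry \eqref{eq:tau} exactly as in your displayed computation. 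Both arguments are sound; the trade-offs are as follows. Your version is more elementary and fully constructive: it produces a canonical section of $\mu|_L$ over $\Fix(\sigma)$ (the part of the real locus $\Fix(R^0)$ lying in the non-negative orthant), it avoids the square-root-in-the-stabilizer step entirely (which in the paper requires a little care when the element $t$ is torsion), and it makes completely transparent where the hypothesis that $R^{\sigma}$ is the \emph{standard} lift enters --- an essential point, since the statement fails for general compatible antisymplectic involutions inducing $\sigma$, as the example $R_2$ on $\C P^3$ shows. The paper's version has the advantage of being phrased intrinsically on $M$, using only the compatibility relations and the fact that stabilizers of the torus action are subtori, without choosing coordinates in $\C^k$. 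Your treatment of the remaining points (the inclusion $\mu(L)\subseteq\Fix(\sigma)$ via $\mu\circ R^{\sigma}=\sigma\circ\mu$, nonemptiness via $0\in\Fix(\sigma)$ from the barycentre normalization, and convexity as an intersection of convex sets) matches the paper.
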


\begin{proof}
Let $x\in \mu (L)$. Then there is an element $Kz\in M=\nu^{-1}(0)/K$ such that $R(Kz) = Kz$ and $\mu(Kz)=x$. Since $R$ is compatible, we obtain
$$\sigma(x)=\sigma(\mu(Kz))=\mu(R(Kz))=\mu(Kz)=x.$$ This imples that $\mu(L)\subset\Fix(\sigma)$. 

Let $x\in \Fix(\sigma)\subset\Delta$ and let $Kz \in M$ with $\mu(Kz)=x$. We show that there is~$\tilde{t} \in T^n$ such that $\tilde{t}.Kz \in L$. Note that
$$\mu(K\rho(z))=\mu(R(Kz))=\sigma(\mu(Kz))=\sigma(x)=x=\mu(Kz),$$
and hence there is an element $t\in T^n$ such that $K\rho(z)=t.Kz$. This follows from the fact that $T^n$ acts transitively on fibres. Applying the involution $R$, we obtain
\begin{equation}\label{equ : conv}
Kz = R_{T^n} (t)t.Kz.
\end{equation}
{\bf Claim.} \emph{There exists $\tilde{t}\in T^n$ such that $\tilde{t}^2=t$ and $Kz=R_{T^n}(\tilde{t})\tilde{t}.Kz$.}

The claim is obvious in case $t=1$ and thus we assume that $t \neq 1$. Denote by $S^1\langle t\rangle$ the subgroup of $T^n$ generated by $t\in T^n$ and consider the group homomorphism
$$
\phi\colon S^1\langle t \rangle \to T^n, \quad \phi(s)=R_{T^n}(s)s.
$$
Since the stabilizer $\Stab(Kz)$ of the $T^n$-action at the point $Kz\in M$ is a subtorus and, by~\eqref{equ : conv}, $\phi(t)=R_{T^n}(t)t\in \Stab(Kz)$, we have $\im \phi \le \Stab(Kz)$. If we choose $\tilde{t}\in S^1\langle t\rangle$ such that $\tilde{t}^2=t$, then $\phi(\tilde{t})=R_{T^n}(\tilde{t})\tilde{t}\in \Stab(Kz)$. Hence, the claim follows.

In order to show $\tilde{t}.Kz\in L= \Fix(R)$, we verify
$$
R(\tilde{t}.Kz)=R_{T^n}(\tilde{t}).R(Kz)=R_{T^n}(\tilde{t})t.Kz=R_{T^n}(\tilde{t})\tilde{t}^2.Kz=\tilde{t}.Kz.
$$
This completes the proof.
\end{proof}
We denote the set of critical points of $f\in C^{\infty}(M)$ by $\Crit(f)$. We recall Duistermaat's tightness theorem \cite[Theorem 3.1]{Duist}.
\begin{theorem}[Duistermaat]\label{thm: duistermaat}
	 Let $(M,\omega,\mu)$ be a compact connected Hamiltonian $T^n$-space. Suppose that $R$ is an antisymplectic involution on $M$ such that $\mu\circ R=\mu$ and $L=\Fix(R)$ is nonempty. For any $\xi\in \mathfrak{t}$ we have 
	 $$\dim H_{*} (L;\Z_2)= \dim H_{*} (\Crit(H_{\xi}|_L);\Z_2).$$ 
\end{theorem}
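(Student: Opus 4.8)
This is Duistermaat's theorem, and the plan is to reduce the tightness statement to a single Smith-theoretic equality and then to prove that equality by an equivariant formality argument. Throughout write $f=H_\xi=\langle\mu,\xi\rangle$ and $b(X)=\dim H_*(X;\Z_2)$. If $\xi=0$ then $f$ is constant and $\Crit(f|_L)=L$, so assume $\xi\neq 0$.

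First I would record the Morse--Bott picture on $M$. The function $f$ generates the action of the subtorus $T_\xi=\overline{\exp(\R\xi)}\subset T^n$, so $\Crit(f)=\Fix(T_\xi)$ is a finite disjoint union of closed symplectic submanifolds $C$. Averaging an $\omega$-compatible almost complex structure over $T^n\times\langle R\rangle$ yields a $T^n$-invariant compatible $J$ with $R_*J=-JR_*$; here one uses that $\mu\circ R=\mu$ forces $R$ to reverse the Hamiltonian flow of $f$, hence to preserve $\Crit(f)$. With respect to such a $J$ each negative normal bundle $\nu_C^-$ is a complex subbundle, so every index $\lambda_C=\dim_\R\nu_C^-$ is even and the lacunary principle makes $f$ a $\Z_2$-perfect Morse--Bott function:
\begin{equation}
b(M)=\sum_C b(C).\tag{$\ast$}
\end{equation}

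Next I would transport this to $L$. Since $R_*J=-JR_*$, the involution $R$ acts antilinearly on each complex bundle $\nu_C^-$, so its fixed subbundle is a real form $(\nu_C^-)^R$ with $\dim_\R(\nu_C^-)^R=\lambda_C/2$. The $R$-equivariant Morse--Bott normal form near $C$ then shows that $f|_L$ is Morse--Bott, that $\Crit(f|_L)=\bigsqcup_C N_C$ with $N_C:=\Fix(R|_C)=C\cap L$, and that its index along $N_C$ is $\lambda_C/2$. The Morse--Bott inequalities over $\Z_2$ (which need no orientability hypothesis) give $b(L)\le b(\Crit(f|_L))$, while Smith's inequality for the involution $R$ on the compact set $\Crit(f)$, together with $(\ast)$, gives $b(\Crit(f|_L))=b(\Fix(R|_{\Crit(f)}))\le b(\Crit(f))=b(M)$. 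Hence
\begin{equation}
b(L)\ \le\ b(\Crit(f|_L))\ \le\ b(M).\tag{$\ast\ast$}
\end{equation}
Consequently the entire theorem follows once one proves the single, $\xi$-independent equality $b(L)=b(M)$, which collapses $(\ast\ast)$ to equalities and yields $b(L)=b(\Crit(f|_L))$.

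The main obstacle is precisely this equality $b(L)=b(M)$, that is, the sharpness of Smith's inequality for the real locus. This is where the hypothesis $\mu\circ R=\mu$ is indispensable: it fails for general compatible antisymplectic involutions, as already seen for the involution $R_2$ on $\C P^3$ above, whose fixed set is empty. I would deduce $b(L)=b(M)$ from the $\Z_2$-equivariant formality of $M$ for the $\langle R\rangle$-action, since the localization theorem then identifies $b(\Fix R)$ with the $\Z_2[u]$-rank of $H^*_{\langle R\rangle}(M;\Z_2)$, which equals $b(M)$. To obtain formality I would run the perfect Morse--Bott function $f$ equivariantly: because the negative normal bundles are complex with antilinear $R$, their $\langle R\rangle$-equivariant Euler classes become invertible after inverting the generator $u$ of $H^*_{\langle R\rangle}(\mathrm{pt};\Z_2)=\Z_2[u]$, so the long exact sequences of the equivariant Morse--Bott filtration split over $\Z_2[u]$ and $H^*_{\langle R\rangle}(M;\Z_2)$ is free. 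Establishing this degeneration --- equivalently, producing enough $R$-equivariant classes to span $H^*(M;\Z_2)$ --- is the technical heart; the reduction in $(\ast)$ and $(\ast\ast)$ above is then purely formal.
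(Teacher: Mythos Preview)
The paper does not prove this theorem; it is recalled from \cite{Duist} and used as a black box in the proof of Theorem~\ref{thm: tightconvex}, so there is no proof in the paper to compare against.

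Your strategy has a genuine flaw: the equality $b(L)=b(M)$ to which you reduce is \emph{false} in the stated generality. Take $M=T^2\times S^2$ with $S^1$ rotating the second factor and moment map the height on $S^2$; let $r(x,y)=(y,x)$ on $T^2=\R^2/\Z^2$ and let $s$ be the equatorial reflection on $S^2$. Then $R=r\times s$ is an antisymplectic involution with $\mu\circ R=\mu$, and $L=\Fix(R)\cong S^1\times S^1$, so $b(L)=4$ while $b(M)=8$. Here your chain $(\ast\ast)$ reads $4\le 4\le 8$: Duistermaat's equality $b(L)=b(\Crit(f|_L))$ holds, but the second inequality is strict, so the theorem cannot be deduced from $b(L)=b(M)$. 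Your equivariant-formality argument must therefore fail, and it fails exactly at the Euler-class step: at any $p\in N_C$ the antilinear $R$ splits the fibre $(\nu_C^-)_p$ into an $R$-trivial real half and a sign half, so the $\langle R\rangle$-equivariant Euler class restricts to zero at $p$ and is $u$-torsion rather than a unit after inverting $u$; the Atiyah--Bott mechanism does not transfer from the torus to $\langle R\rangle$. What survives from your write-up is the Morse--Bott set-up for $f|_L$ with half indices, the identification $\Crit(f|_L)=\Crit(f)\cap L$, and the inequality $b(L)\le b(\Crit(f|_L))$; the missing idea is a \emph{direct} proof that $f|_L$ is $\Z_2$-perfect Morse--Bott, not a comparison with $b(M)$. (A minor further point: the lacunary principle alone does not justify $(\ast)$ when the components $C$ carry odd $\Z_2$-cohomology, as in the example above; one needs the Atiyah--Bott/Kirwan self-perfecting argument for moment-map components.)
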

Our tightness result is a corollary of this theorem.
\begin{proof}[Proof of Theorem \ref{thm: tightconvex}]
The convexity result follows from Theorem \ref{thm: convexity}. To prove the tightness, let $\sigma^*$ denote the transpose of $\sigma \in \Aut_{\Z}(\mathfrak{t}^n)^*$. Then $\sigma^*$ is an involution on $\mathfrak{t}^n$ and for any $\xi \in \mathfrak{t}^n$ we can decompose $\xi$ as $\xi=\xi_1 +\xi_2$ with $\sigma^* \xi_1 = -\xi_1$ and $\sigma^* \xi_2 =\xi_2$. Note that  $H_{\xi}=H_{\xi_1}+H_{\xi_2}$. If $x\in \Fix(R)$, we have
$$\langle \mu(x), \xi_1 \rangle = \langle \mu(R(x)), \xi_1 \rangle 
= \langle \sigma(\mu(x)), \xi_1 \rangle 
= \langle \mu(x), \sigma^* \xi_1 \rangle   
= \langle \mu(x), -\xi_1 \rangle,
$$
which implies
\begin{equation}\label{eq: xi1vanish}
H_{\xi_1}|_L\equiv 0.
\end{equation}
Furthermore, we see that
$$
\langle \mu \circ R, \xi_2 \rangle = \langle \sigma \circ \mu, \xi_2 \rangle = \langle \mu, \sigma^* \xi_2 \rangle = \langle \mu, \xi_2 \rangle.
$$
We take the subtorus $T_0 \stackrel{j} \hookrightarrow T^n$ such that $ \Lie(T_0) = \mathfrak{t}_0 = \{\xi\in \mathfrak{t}^n \mid \sigma^* \xi =\xi\}$, i.e., $T_0$ is the identity component of $\Fix(\sigma^*)$. Then the induced $T_0$-action on $M$ is Hamiltonian and has moment map $\tilde{\mu}:=j^*\mu$ with $\tilde{\mu} \circ R = \tilde{\mu}$. Since $\xi_2\in \mathfrak{t}_0$ by Theorem \ref{thm: duistermaat} and \eqref{eq: xi1vanish} we obtain
\begin{eqnarray*}
\dim H_{*} (L;\Z_2) &=& \dim H_{*} (\Crit(H_{\xi_2}|_L);\Z_2) \\
&=&  \dim H_{*} (\Crit(H_{\xi}|_L);\Z_2).	
\end{eqnarray*}
This completes the proof.
\end{proof}
The following example illustrates that the tightness and convexity do not hold if we drop the compatible condition on the real Lagrangian $L=\Fix(R)$.
\begin{example}\label{ex: tightconvexfails}
Consider the two-sphere $S^2$ equipped with the Euclidean area form. Any embedded loop in $S^2$ dividing $S^2$ into two discs of equal area is a real Lagrangian. Pick $\xi=1\in \mathfrak{t}\cong \R$ so that $\mu=H_\xi$. It is not difficult to find a real Lagrangian $L$ in $S^2$ such that $\Crit(H_\xi|_L)$ consists of four critical points, see Figure~\ref{fig: tightnessfails}. Hence, tightness fails for the real Lagrangian $L\cong S^1$, namely,
$$\dim H_{*} (L;\Z_2)= 2 \neq 4 =\dim H_{*} (\Crit(H_{\xi}|_{L});\Z_2).$$	
\end{example}
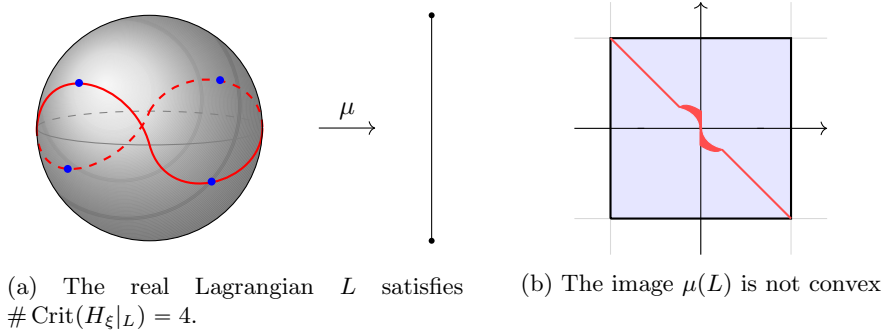
\begin{figure}[h]
\begin{subfigure}{0.48\textwidth}
   \centering
\begin{tikzpicture}[scale = 0.75]
    \shade[ball color = gray!40, opacity = 0.4] (0,0) circle (2cm);
    \draw (5,-2)--(5,2);
    \fill[fill=black] (5,-2) circle (1.5pt);
    \fill[fill=black] (5,2) circle (1.5pt);
    \draw[gray] (0,0) +(180:2) arc (180:360:2 and 0.3);
    \draw[gray, dashed] (0,0) +(0:2) arc (0:180:2 and 0.3);
    \draw[red, thick] (-2,0) .. controls +(85:1.3) and +(105:1.2) .. (0,-0.3);
    \draw[red, thick] (0,-0.3) .. controls +(285:1.2) and +(270:1) .. (2,0);
    \draw[red, thick, dashed] (-2,0) .. controls +(270:1.3) and +(250:1) .. (0,0.3);
    \draw[red, thick, dashed] (0,0.3) .. controls +(60:1) and +(90:1) .. (2,0);
    \draw (0,0) circle (2cm);
    \fill[fill=blue] (-1.45,-0.725) circle (2pt);
    \fill[fill=blue] (-1.25,0.8) circle (2pt);
    \fill[fill=blue] (1.1,-0.95) circle (2pt);
    \fill[fill=blue] (1.25,0.85) circle (2pt);
    \draw[->] (3,0)--node[above]{$\mu$}(4,0);
    \draw [white] (0, -2.3)--(1, -2.3);
        \draw [white] (0, 2.3)--(1, 2.8);
\end{tikzpicture}
\caption{The real Lagrangian $L$ satisfies $\#\Crit(H_\xi|_L)=4$.}
\label{fig: tightnessfails}
\end{subfigure}
\begin{subfigure}{0.48\textwidth}
   \centering
	\begin{tikzpicture}[scale=1.2]
\draw[step=1.0,black!15,thin] (-1.4,-1.4) grid (1.4,1.4);
\draw [thick,fill=blue!10] (-1,-1)--(-1,1)--(1,1)--(1,-1)--(-1,-1);

\draw[thick, red!70] (-1,1)--(1,-1);
\draw [->] (-1.4,0)--(1.4,0);
\draw [->] (0,-1.4)--(0,1.4);


\draw [rotate=-45, fill=red!70, red!70] (0,0) ellipse (0.33cm and 0.15cm);
\draw [fill=blue!10, blue!10](-0.32,0) ellipse (0.3cm and 0.23cm);

\draw [fill=blue!10, blue!10](0.32,0) ellipse (0.3cm and 0.23cm);
\draw [fill=blue!10, blue!10] (0.02, 0)--(0.5, 0)--(0.5, 0.5)--(0.02, 0.5)--(0.02, 0);
\draw [fill=blue!10, blue!10] (-0.02, 0)--(-0.5, 0)--(-0.5, -0.5)--(-0.02, -0.5)--(-0.02, 0);
\draw (-0.7, 0)--(0.7,0);
\draw [red!70, fill=red!70] (-0.008, -0.02)--(0.01, -0.02)--(0.008, 0.02)--(-0.01, 0.02)--(-0.008, -0.02);

\end{tikzpicture}
\caption{The image $\mu(L)$ is not convex}
\label{fig: convexityfails}
\end{subfigure}
\caption{ Examples in which convexity or tightness fail.}
\end{figure}
One can easily find a real Lagrangian torus $L$ in $S^2\times S^2$ such that $\mu(L)$ is not convex. Indeed, let
$$
L'=\Fix(R')=\{(x,-x) \mid x\in S^2\}
$$ be the real Lagrangian in $S^2\times S^2$, where $R'(x,y)=(-y,-x)$. Note that $\mu(L')=\{(\xi,-\xi) \mid \xi\in \mathfrak{t}^* \}\cap \Box$ with $\Box=\mu(S^2\times S^2)$. Then one can choose a suitable Hamiltonian diffeomorphism $\phi$ on $S^2\times S^2$ which is compactly supported in $\mu^{-1}(U_0)$, where $U_0\subset \Box$ is a small open set of the origin, such that the antisymplectic involution $R:=\phi  \circ R'\circ \phi^{-1}$ has fixed point set $\Fix(R)=\phi(\Fix(R'))$ whose moment image is wiggled near the origin. See Figure \ref{fig: convexityfails}.

\section{Real Lagrangians in toric symplectic del Pezzo surfaces}\label{sec: delpezzo}
As an application of our real Delzant construction, we study real Lagrangians in toric symplectic del Pezzo surfaces. Recall that a \emph{symplectic del Pezzo surface} is one the following symplectic 4-manifolds:
\begin{enumerate}
	\item $Q:=S^2 \times S^2$ the product of the 2-sphere $(S^2,\ow)$, where $\ow$ denotes an area form on $S^2$,
	\item $X_k:=\C P^2\# k\overline{\C P^2}$ the $k$-fold monotone symplectic blow-up of $\C P^2$ for $0\le k\le 8$.
\end{enumerate}
Every closed monotone symplectic 4-manifold is one of the symplectic del Pezzo surfaces and that the monotone symplectic structures on del Pezzo surfaces are unique, see \cite[Section 1]{Vianna} for the references.
 The following is an analogue of \cite[Lemma 2.3]{Evans}, which gives a homological obstruction for being Lagrangian in symplectic del Pezzo surfaces.
\begin{lemma}\label{lem: lagindelpezzo}
Symplectic del Pezzo surfaces contain no Lagrangian $\Sigma_g$ for all $g\ge 2$, where $\Sigma_g$ denotes the closed oriented surface of genus $g$. Furthermore, $X_1$ does not contain any Lagrangian sphere. 
\end{lemma}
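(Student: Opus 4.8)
The plan is to obtain a contradiction from the homological constraints that a Lagrangian surface imposes in a symplectic $4$-manifold with $b_2^+=1$. Two standard facts will do all the work. First, for a closed Lagrangian surface $L$ in a symplectic $4$-manifold $(M,\omega)$, the Weinstein neighbourhood theorem identifies the normal bundle of $L$ with $T^*L$, so that the self-intersection number is
$$
[L]\cdot[L]=-\chi(L)
$$
once $M$ is given its symplectic orientation; I will want to be careful about this sign, since it is precisely what makes the argument bite for $g\ge 2$ but \emph{not} for spheres. Second, $L$ being Lagrangian forces $\langle[\omega],[L]\rangle=\int_L\omega=0$, and the isomorphism $TM|_L\cong TL\otimes_{\R}\C$ forces $c_1(M)|_L$ to have order at most $2$ in $H^2(L;\Z)$.

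For the first assertion I would argue as follows. Every symplectic del Pezzo surface $M$ has $b_2^+(M)=1$: the intersection form of $S^2\times S^2$ is hyperbolic, and that of $X_k$ is $\langle 1\rangle\oplus k\langle -1\rangle$. Also $[\omega]^2=\int_M\omega\wedge\omega>0$, so by the Hodge index theorem (equivalently, the light-cone lemma) the $[\omega]$-orthogonal complement in $H_2(M;\R)$ is negative definite; in particular any class with zero $\omega$-area has non-positive square. If $\Sigma_g\subset M$ were an (oriented) Lagrangian surface of genus $g$, then $\langle[\omega],[\Sigma_g]\rangle=0$ would give $[\Sigma_g]\cdot[\Sigma_g]\le 0$, whereas the self-intersection formula gives $[\Sigma_g]\cdot[\Sigma_g]=-\chi(\Sigma_g)=2g-2$. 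For $g\ge 2$ this is strictly positive, a contradiction; hence no Lagrangian $\Sigma_g$ with $g\ge 2$ exists, in any symplectic del Pezzo surface.

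For the second assertion, suppose $S\cong S^2$ is a Lagrangian sphere in $X_1=\C P^2\#\overline{\C P^2}$. I would use the basis of $H_2(X_1;\Z)$ given by the line class $\ell$ and the exceptional class $\varepsilon$, with $\ell\cdot\ell=1$, $\varepsilon\cdot\varepsilon=-1$, $\ell\cdot\varepsilon=0$ and $\langle c_1(X_1),\ell\rangle=3$, $\langle c_1(X_1),\varepsilon\rangle=1$. Writing $[S]=a\ell+b\varepsilon$ with $a,b\in\Z$, the condition $c_1(X_1)|_S=0$ (valid since $H^2(S^2;\Z)$ is torsion-free) becomes $3a+b=0$, while $[S]\cdot[S]=-\chi(S^2)=-2$ becomes $a^2-b^2=-2$. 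Substituting $b=-3a$ yields $-8a^2=-2$, which has no solution in $\Z$. Hence $X_1$ contains no Lagrangian sphere.

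I do not anticipate a serious obstacle: once the two standard facts above are in place, both parts reduce to the Hodge index theorem and an elementary Diophantine check. The one genuinely delicate point is the sign and orientation convention in $[L]\cdot[L]=-\chi(L)$ — with the opposite sign the light-cone argument would instead \emph{permit} high-genus Lagrangians and the whole argument would collapse — so I would pin this down carefully, for instance by testing it against the antidiagonal Lagrangian sphere in $S^2\times S^2$, which has self-intersection $-2$.
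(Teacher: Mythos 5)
Your proof is correct, but the first half takes a genuinely different route from the paper. The paper uses monotonicity: since $c_1(X)=\lambda[\omega]$ with $\lambda>0$ and $\int_L\omega=0$, one gets $c_1(X)[L]=0$ in addition to $[L]\cdot[L]=-\chi(L)$, and then checks case by case against the explicit intersection forms of $Q$ and $X_k$, ending with the identity $9\chi(L)=(9-k)\sum_j b_j^2+\sum_{i<j}(b_i-b_j)^2\ge 0$ for $k\le 8$. You instead invoke the light-cone lemma: $b_2^+=1$ and $[\omega]^2>0$ force the $[\omega]$-orthogonal complement to be negative definite, so $[L]^2\le 0$ and hence $\chi(L)\ge 0$ directly. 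Your argument is more general (it needs neither monotonicity nor the specific intersection forms, only $b_2^+=1$), while the paper's is a purely integral computation whose final identity makes visible exactly where the del Pezzo bound $k\le 8$ enters and which homology classes are admissible. For the $X_1$ statement, the paper is slightly more economical — it uses only $a^2-b^2=-2$, which already fails by parity — whereas you additionally impose $\langle c_1(X_1),[S]\rangle=0$ (correctly justified via the $2$-torsion of $c_1(TL\otimes_{\R}\C)$ and the torsion-freeness of $H^2(S^2;\Z)$) to reach $4a^2=1$. Your care about the sign in $[L]\cdot[L]=-\chi(L)$ is well placed, and the antidiagonal sphere in $S^2\times S^2$ is indeed the right sanity check; with that convention pinned down, both parts of your argument are sound.
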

\begin{proof}
Let $L$ be an orientable Lagrangian in a symplectic del Pezzo surface $X$. It suffices to show that $\chi(L)\ge 0$. Since $L$ is Lagrangian and $X$ is monotone, the homology class $[L]\in H_2(M;\Z)$ satisfies 
\begin{equation}\label{eq:cL}
c_1(X)[L]=0,\quad [L]\cdot[L]=-\chi(L).
\end{equation}
The second property follows from the Weinstein neighborhood theorem, which asserts that the normal bundle of $L$ is isomorphic to $T^*L$, and from $\dim L=2$.\\\\
{\bf Case of $Q$.} The first Chern class $c_1(Q)$ is Poincar\'{e} dual to $2\alpha+2\beta$, where $\alpha=[S^2\times \{pt\}]$ and $\beta=[\{pt\}\times S^2]$ are generators of $H_2(Q;\Z)$. Let $[L]=a\alpha+b\beta$ for $a,b\in \Z$. Then the identities \eqref{eq:cL} become
$$
2a+2b=0,\quad 2ab=-\chi(L),
$$
which shows that $\chi(L)=2 b^2\ge 0$.\\\\
{\bf Case of $X_k$ for $1\le k\le 8$.} Note that $c_1(X_k)$ is Poincar\'{e} dual to the class
$$
3H-\sum_{j=1}^kE_j\in H_2(X_k;\Z),
$$
where $H=[\C P^1]$ and the $E_j$ are the classes of the exceptional spheres. Write $[L]=aH-\sum_{j=1}^kb_jE_j$. Equation \eqref{eq:cL} becomes
\begin{eqnarray*}
	3a-\sum_{j=1}^k b_j=0,\quad a^2-\sum_{j=1}^kb_j^2=-\chi(L),
\end{eqnarray*}
which yields
$$
9\sum_{j=1}^kb_j^2-\left(\sum_{j=1}^k b_j\right)^2=9\cdot \chi(L).	
$$
This identity can be rewritten as 
$$
9\chi(L)=(9-k)\sum_{j=1}^kb_j^2+\sum_{i<j}(b_i-b_j)^2.
$$
Since $k \leq 8$ we conclude that  $\chi (L) \geq 0$
also in this case.

In order to prove the last statement, note that if $L$ were a Lagrangian sphere in $X_1$ with $[L]=aH - bE_1$, then by the second property in equation~$(\ref{eq:cL})$ we would have $a^2 - b^2 = -2$.
\end{proof}
Smith theory \cite[Theorems 4.1 and 4.3, Chapter III]{Bredon} implies that any real Lagrangian $L$ in a symplectic manifold $(M,\ow)$ satisfies
\begin{eqnarray*}
	\dim H_*(M; \Z_2) &\ge& \dim H_*(L;\Z_2),\\
	\chi(M) &=& \chi(L)\mod 2.
\end{eqnarray*}
Together with Lemma \ref{lem: lagindelpezzo} one obtains Table \ref{tab: delpezzo} for the candidates of diffeomorphism types except for the cases of $\R P^2\#\R P^2$ in $S^2\times S^2$ and $T^2$ in $X_1$. We are only interested in the symplectic del Pezzo surfaces that have a toric structure, namely $S^2\times S^2$ and $X_k$ for $0\le k \le 3$. By \cite[Lemma 4.4]{Kim}, we can exclude $\R P^2\# \R P^2$ in $S^2\times S^2$ in Table \ref{tab: delpezzo}. 

\subsection{Arnold lemma and its application}
In order to show that $T^2$ cannot be a real Lagrangian in $X_1$, we employ the Arnold lemma which we now explain. We refer to \cite{Arnold} for details. 
Let $\tau$ be an orientation-preserving involution of a closed oriented manifold~$X^{4}$. Assume that the fixed point set~$\Fix(\tau)$ of~$\tau$ is a closed surface. The involution $\tau$ induces the isomorphism $\tau_*\colon H_2(X;\Z_2)\to H_2(X;\Z_2)$. We define the symmetric $\Z_2$-bilinear form $\Phi_\tau$ (called \emph{the twisted intersection form}) on $H_2(X;\Z_2)$ by
$$
\Phi_\tau(\alpha,\beta)=\alpha\cdot \tau_*(\beta)\mod 2,
$$
where $\cdot$ denotes the intersection number. Recall that $w\in H_2(X;\Z_2)$ is called \emph{characteristic class (or fundamental class)} of $\Phi_\tau$ if $\Phi_\tau(w,\alpha)=\Phi_\tau(\alpha,\alpha)$ for all~$\alpha\in H_2(X;\Z_2)$. Since $\Phi_\tau$ is non-degenerate, there exists a unique characteristic class of $\Phi_\tau$. Note that the characteristic class of $\Phi_\tau$ vanishes if and only if $\Phi_\tau(\alpha,\alpha)=0$ for all $\alpha\in H_2(X;\Z_2)$. The following is the so-called \emph{Arnold lemma}, see \cite[Lemma 3]{Arnold} for the proof.
\begin{lemma}\label{lem: arnold}
	The $\Z_2$-homology class $[\Fix(\tau)]_{\Z_2}\in H_2(X;\Z_2)$ represented by $\Fix(\tau)$ is the characteristic class of $\Phi_\tau$.
\end{lemma}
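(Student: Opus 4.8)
The plan is to check directly that $[\Fix(\tau)]_{\Z_2}$ has the property that characterizes the fundamental (characteristic) class of $\Phi_\tau$; since $\Phi_\tau$ is non-degenerate this determines it uniquely, so it suffices to prove
\[
\Phi_\tau\big([\Fix(\tau)]_{\Z_2},\alpha\big)=\Phi_\tau(\alpha,\alpha)\qquad\text{for every }\alpha\in H_2(X;\Z_2).
\]
First I would simplify the left-hand side homologically. Since $\Fix(\tau)$ is $\tau$-invariant we have $\tau_*[\Fix(\tau)]_{\Z_2}=[\Fix(\tau)]_{\Z_2}$, and since $\tau$ is a self-homeomorphism of the closed manifold $X$ the induced map $\tau_*$ preserves the $\Z_2$-intersection pairing; hence
\[
\Phi_\tau\big([\Fix(\tau)]_{\Z_2},\alpha\big)=[\Fix(\tau)]_{\Z_2}\cdot\tau_*\alpha=\tau_*[\Fix(\tau)]_{\Z_2}\cdot\tau_*\alpha=[\Fix(\tau)]_{\Z_2}\cdot\alpha.
\]
Therefore the lemma reduces to the congruence
\[
[\Fix(\tau)]_{\Z_2}\cdot\alpha\;\equiv\;\alpha\cdot\tau_*\alpha\pmod 2\qquad\text{for all }\alpha\in H_2(X;\Z_2).
\]

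The second step is geometric. Represent $\alpha$ by an embedded closed surface $\Sigma\subset X$: every degree-$2$ class in $H_*(X^4;\Z_2)$ is so representable, for instance by taking a map of a closed surface provided by Thom's theorem, making it a generic immersion, and removing its transverse double points by the standard local resolution, which happens inside balls and leaves the homology class unchanged. Now isotope $\Sigma$ so that it is transverse to $\Fix(\tau)$ and so that $\Sigma$ and $\tau(\Sigma)$ are transverse. Then both sides of the reduced congruence become mod-$2$ point counts: $[\Fix(\tau)]_{\Z_2}\cdot\alpha\equiv\#\big(\Sigma\cap\Fix(\tau)\big)$ and $\alpha\cdot\tau_*\alpha=[\Sigma]\cdot[\tau(\Sigma)]\equiv\#\big(\Sigma\cap\tau(\Sigma)\big)$. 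The key is a matching argument: any $p\in\Sigma\cap\Fix(\tau)$ satisfies $p=\tau(p)\in\tau(\Sigma)$, so $\Sigma\cap\tau(\Sigma)\cap\Fix(\tau)=\Sigma\cap\Fix(\tau)$; and if $p\in\Sigma\cap\tau(\Sigma)$ with $p\notin\Fix(\tau)$, then $\tau(p)$ again lies in $\Sigma\cap\tau(\Sigma)$ and $\tau(p)\ne p$, so such points come in free $\tau$-orbits $\{p,\tau(p)\}$. Hence $\#\big(\Sigma\cap\tau(\Sigma)\big)\equiv\#\big(\Sigma\cap\Fix(\tau)\big)\pmod 2$, which is exactly what is needed.

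The main obstacle is the transversality arrangement, specifically making $\Sigma$ transverse to $\tau(\Sigma)$: an isotopy of $\Sigma$ moves $\tau(\Sigma)$ along with it, and the two surfaces are forced to meet along the finite set $\Sigma\cap\Fix(\tau)$, where transversality is not a consequence of transversality to $\Fix(\tau)$. I would resolve this using the local model of $\tau$ near a fixed point $x\in\Sigma\cap\Fix(\tau)$: by Bochner linearization one may take $\tau=\diag(1,1,-1,-1)$, so $T_xX=E_+\oplus E_-$ with $E_+=T_x\Fix(\tau)$ and $E_-$ the $(-1)$-eigenspace of $d\tau_x$, each two-dimensional. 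Transversality of $\Sigma$ to $\Fix(\tau)$ says $T_x\Sigma$ is the graph of a linear map $A\colon E_-\to E_+$, and a one-line computation gives $T_x\Sigma+d\tau_x(T_x\Sigma)=E_-\oplus\im A$; thus $\Sigma$ and $\tau(\Sigma)$ meet transversely at $x$ exactly when $A$ is invertible, an open dense condition on the tangent plane of $\Sigma$ at $x$ which one achieves by an arbitrarily small isotopy supported near the finitely many points of $\Sigma\cap\Fix(\tau)$ and which does not disturb transversality to $\Fix(\tau)$. Away from $\Fix(\tau)$ transversality of $\Sigma$ and $\tau(\Sigma)$ is the usual generic condition, so combining the two regimes makes $\Sigma$ transverse to $\tau(\Sigma)$ globally; equivalently, one can package the whole perturbation as a parametric transversality statement for $(p,q)\mapsto(j(p),\tau j(q))$ on $\Sigma\times\Sigma$ against the diagonal of $X\times X$, which is automatic off the source diagonal $\{p=q\}$ and along it is precisely the genericity of $A$ above. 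Once this is in place the matching argument of the previous paragraph applies verbatim and completes the proof.
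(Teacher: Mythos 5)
The paper does not actually prove this lemma --- it is quoted from Arnold and the reader is referred to \cite[Lemma 3]{Arnold} --- so there is no internal proof to compare against; judged on its own, your argument is correct and is essentially the classical one. The reduction via $\tau_*[\Fix(\tau)]_{\Z_2}=[\Fix(\tau)]_{\Z_2}$ and invariance of the mod-$2$ intersection pairing correctly turns the statement into $[\Fix(\tau)]_{\Z_2}\cdot\alpha\equiv\alpha\cdot\tau_*\alpha$, the pairing-up of the free $\tau$-orbits in $\Sigma\cap\tau(\Sigma)$ is the right mechanism, and you correctly identify and resolve the one genuine subtlety: transversality of $\Sigma$ and $\tau(\Sigma)$ cannot be obtained by a blind general-position argument because the two surfaces are forced to meet along $\Sigma\cap\Fix(\tau)$. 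Your local computation (linearize $\tau$ at a fixed point, write $T_x\Sigma$ as the graph of $A\colon E_-\to E_+$, and observe that $T_x\Sigma+d\tau_x(T_x\Sigma)=E_-\oplus\im A$, so transversality at $x$ is exactly invertibility of $A$) is accurate, and the splitting into the two regimes (on and off $\Fix(\tau)$) is clean because a point of $\Sigma\cap\tau(\Sigma)$ lying on $\Fix(\tau)$ necessarily comes from the diagonal of $\Sigma\times\Sigma$. The only ingredients taken on faith --- representability of mod-$2$ classes by embedded surfaces in a $4$-manifold and Bochner linearization of a smooth involution at a fixed point --- are standard and correctly invoked. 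So the proposal is a complete and self-contained proof of a statement the paper only cites.
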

We are ready to prove the following lemma.
\begin{lemma}
Assume that $L$ is a real Lagrangian in $X_1$ that is diffeomorphic to a closed connected surface. Then $L$ must be non-orientable.
\end{lemma}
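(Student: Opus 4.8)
The plan is to argue by contradiction: assume $L$ is orientable and reach an impossibility. Since $L$ is then a closed connected \emph{orientable} Lagrangian surface in $X_1$, it is diffeomorphic to some $\Sigma_g$. By Lemma~\ref{lem: lagindelpezzo}, a symplectic del Pezzo surface contains no Lagrangian $\Sigma_g$ with $g\ge 2$, and the same lemma says $X_1$ contains no Lagrangian sphere; hence $g=1$ and $L\cong T^2$. So the whole point is to show that a Lagrangian torus in $X_1$ cannot arise as the fixed locus of an antisymplectic involution.

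The first concrete step is to compute the homology class of $L$. Writing $[L]=aH-bE\in H_2(X_1;\Z)$ in the standard basis ($H=[\C P^1]$, $E$ the exceptional class, with $H^2=1$, $E^2=-1$, $H\cdot E=0$), the constraints $\eqref{eq:cL}$ that an orientable Lagrangian in the monotone manifold $X_1$ must satisfy read $c_1(X_1)[L]=3a-b=0$ and $[L]\cdot[L]=-\chi(T^2)=0$, i.e.\ $a^2-b^2=0$. Together these force $a=b=0$, so $[L]=0$ in $H_2(X_1;\Z)$ and therefore $[L]_{\Z_2}=0\in H_2(X_1;\Z_2)$.

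The heart of the argument is the Arnold lemma~\ref{lem: arnold}. Write $L=\Fix(R)$ for an antisymplectic involution $R$ of $X_1$. Since $R^*\ow=-\ow$ gives $R^*(\ow\w\ow)=\ow\w\ow$, the map $R$ is orientation-preserving, so the Arnold lemma applies with $\tau=R$ and tells us that $[L]_{\Z_2}=[\Fix(R)]_{\Z_2}$ is the characteristic class of the twisted intersection form $\Phi_R(\alpha,\beta)=\alpha\cdot R_*\beta$. The key observation is that $R_*$ is trivial modulo $2$: being induced by an orientation-preserving diffeomorphism, $R_*$ is an isometry of the integral intersection form, which in the basis $(H,E)$ is $\diag(1,-1)$; an elementary computation shows the integral isometry group of this rank-two lattice consists only of the four matrices $\diag(\pm1,\pm1)$, all of which reduce to the identity mod $2$. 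Hence $\Phi_R$ is simply the mod-$2$ intersection form, which in the basis $(H,E)$ is the identity matrix (because $E^2=-1\equiv 1$), and whose characteristic class is therefore $H+E\ne 0$. This contradicts $[L]_{\Z_2}=0$, and we conclude that $L$ must be non-orientable.

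I expect the only delicate point to be the mod-$2$ triviality of $R_*$, which rests on the computation of the isometry group of $\diag(1,-1)$ over $\Z$ together with the fact that an antisymplectic involution of a symplectic $4$-manifold preserves orientation; the remaining steps are routine bookkeeping. As a consistency check, the characteristic class $H+E$ equals $c_1(X_1)\bmod 2$, which is nonzero precisely because $X_1$ is not spin, in accordance with the Arnold lemma.
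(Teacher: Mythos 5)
Your proof is correct and follows essentially the same route as the paper: apply the Arnold lemma to conclude that $[L]_{\Z_2}$ is the characteristic class of the twisted intersection form, show that this form is just the mod-$2$ intersection form of $X_1$ (which is odd, so its characteristic class is $H+E\neq 0$), and contradict $[L]_{\Z_2}=0$. The only cosmetic differences are that you obtain $[L]_{\Z}=0$ from the Lagrangian constraints $c_1(X_1)[L]=0$ and $[L]^2=-\chi(L)$ rather than from the paper's computation $R_*=-\id$ on $H_2(X_1;\Z)$, and you get $R_*\equiv\id\bmod 2$ from the isometry group of $\diag(1,-1)$ instead of from the exact identity $R_*=-\id$; both are valid.
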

\begin{proof}
Let $L=\Fix(R)$ for some antisymplectic involution $R$ of $X_1$. Assume to the contrary that $L$ is orientable, and hence $L$ represents a $\Z$-homology class   $[L]_{\Z}\in H_2(X_1;\Z)$. Using the notations in the proof of Lemma \ref{lem: lagindelpezzo}, we take the generators $H$ and $E_1$ on $H_2(X_1;\Z)$. Since $R$ is orientation-preserving, $R_*$ preserves the intersection form. Using also that $R_*^2 = \id$
and that $R^*[\omega] = -[\omega]$ on $H^2(X_1;\Z)$, one computes that that the induced map $R_*$ on $H_2(X_1;\Z)$ is given by $R_*=-\id$.
Since $R_*[L]_{\Z}=[L]_{\Z}$, we obtain $[L]_{\Z}=0$ and hence $[L]_{\Z_2}=0$ as well. Noting that $R_*^{\Z_2}=\id$ on $H_2(X_1;\Z_2)$, the twisted intersection form $\Phi_R$ is the usual mod 2 intersection form of $X_1$. By Lemma \ref{lem: arnold}, the characteristic class of $\Phi_R$ vanishes and so the intersection form of $X_1$ must be even, which yields a contradiction.
\end{proof}
We conclude that there are no real Lagrangian tori in $X_1$.

\subsection{Constructions of explicit real Lagrangians}
We now explicitly construct real Lagrangians that realize all diffeomorphism types in Table~\ref{tab: delpezzo}.
\begin{remark}
On every toric manifold $M$, we can lift the trivial involution $\sigma_0 = \id$, which yields the natural antisymplectic involution $R^0$ on $M$. Its fixed point set corresponds to the \emph{real locus}, see Proposition~\ref{prop:stdantiinvol} and Remark~\ref{rk:reallocus}. In particular, the real locus of $X_k$, that is diffeomorphic to $\#_{k+1}\R P^2$, is the real Lagrangian $\Fix(R^{\id})$ in $X_k$ for each $0\le k \le 3$. 

\end{remark}
\begin{example}\label{example: cp2}
Consider $\C P^2$ equipped with the Fubini-Study form $\ow_{\FS}$. Its Delzant polytope $\Delta$ is defined by the outward pointing normal vectors
$$v_1=(-1,0),\quad v_2=(0,-1), \quad v_3=(1,1).$$
There is one non-trivial involution in $\mathcal{S}_\Delta$, namely the reflection $\sigma(x,y)=(y,x)$ with respect to the diagonal line, see Figure \ref{fig: 1}.
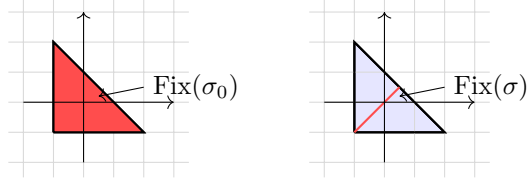
\begin{figure*}[h]
\begin{center}
\begin{tikzpicture}[scale=0.4]

\begin{scope}[xshift=-10cm]
\begin{scope}[xshift=-1cm, yshift=-1cm]

\draw[step=1.0,black!15,thin] (-1.5,-1.5) grid (4.5,4.5);

\draw [thick] (0,0)--(0,3)--(3,0)--(0,0);
\draw [thick,fill=red!70] (0,0)--(0,3)--(3,0)--(0,0);
\draw [->] (3, 1.5)--(1.5,1.2);

\node at (3,1.5)[right]{$\Fix(\sigma_0)$};
\end{scope}
\draw [->] (-2,0)--(3,0);
\draw [->] (0,-2)--(0,3);
\end{scope}

\begin{scope}[xshift=-1cm, yshift=-1cm]
\draw [thick,fill=blue!10] (0,0)--(0,3)--(3,0)--(0,0);

\draw[step=1.0,black!15,thin] (-1.5,-1.5) grid (4.5,4.5);

\draw [thick] (0,0)--(0,3)--(3,0)--(0,0);
\draw [thick,red!70] (0,0)--(1.5,1.5);
\draw [->] (3,1.5)--(1.5,1.2);

\node at (3,1.5)[right]{$\Fix(\sigma)$};
\end{scope}
\draw [->] (-2,0)--(3,0);
\draw [->] (0,-2)--(0,3);
\end{tikzpicture}		
\end{center}
\caption{Involutions on the 2-simplex $\Delta$}
\label{fig: 1}
\end{figure*}

\noindent
We use the real Delzant construction to prove that $\Fix(R^{\sigma})$ is diffeomorphic to $\R P^2$. Since $\sigma$ exchanges the normal vetors $v_1$ and $v_2$, we obtain
\begin{eqnarray*}
	&&\rho\colon \C^3\to \C^3, \quad \rho(z_1,z_2,z_3)=(\bar{z}_2,\bar{z}_1,\bar{z}_3), \\
	&&\rho_{T^3}\colon T^3\to T^3, \quad \rho_{T^3}(t_1,t_2,t_3)=(t_2^{-1},t_1^{-1},t_3^{-1}).
\end{eqnarray*}
Observe that
\begin{eqnarray*}
&&\Fix(\rho)=\{(z,\bar{z},x) \mid z\in \C,\ x\in \R \},\\
&& \Fix(\rho_{T^3})=	\left\{(t,t^{-1},s) \mid t\in S^1,\ s=\{1,-1 \} \right\} \cong S^1\oplus \Z_2.
\end{eqnarray*}
Hence, the kernel of $\pi_R$ is
$$
K_R=K\cap \Fix(\rho_{T^3})=\{(1,1,1),(-1,-1,-1)\}\cong \Z_2
$$
and
$$
\nu_R(z,\bar{z},x)=|z|^2+\frac{x^2}{2}-3.
$$
Therefore $\nu_R^{-1}(0)$ is a 2-sphere on which $K_R$ acts by the antipodal mapping and hence
$$
\Fix(R^{\sigma})\cong \nu_R^{-1}(0)/K_R\cong \R P^2.
$$
In fact, it follows from Smith theory that any real Lagrangian in $\C P^2$ (not necessarily compatible with the torus action) is diffeomorphic to $\R P^2$.
\end{example}

\begin{example}
Consider $(S^2 \times S^2, \omega_0 \oplus \omega_0, \mu)$, where $\omega_0$ is the area form on the sphere and its Delzant polytope is
$$
\Box:=[-1,1]^2=\mu(S^2\times S^2)
$$
with outward pointing normal vectors
$$
v_1=(1,0),\quad v_2=(0,1),\quad v_3=(-1,0),\quad v_4=(0,-1)
$$
and $\kappa_1=\dots=\kappa_4=1$. Hence,
$$
\pi=\begin{pmatrix}
	1 & 0 & -1 & 0 \\
	0 & 1 & 0 & -1
\end{pmatrix},\quad K = \{(e^{2\pi i \alpha},e^{2\pi i \beta},e^{2\pi i \alpha},e^{2\pi i \beta})\} 
$$
We consider the four involutions in $\mathcal{S}_{\Box}$ given in Figure~\ref{fig: involutionsonS2S2}, namely
$$
\sigma_0 = \begin{pmatrix}
	1 & 0 \\
	0 & 1
\end{pmatrix}, \quad 
\sigma_1 = \begin{pmatrix}
	-1 & 0 \\
	0 & -1
\end{pmatrix},\quad 
\sigma_2 = \begin{pmatrix}
	0 & -1 \\
	-1 & 0
\end{pmatrix},\quad 
\sigma_3 =\begin{pmatrix}
	-1 & 0 \\
	0 & 1
\end{pmatrix}.
$$
\begin{figure}[h]
\begin{subfigure}{0.24\textwidth}
   \centering
\begin{tikzpicture}[scale=0.6]
\draw[step=1.0,black!15,thin] (-2.5,-2.5) grid (2.5,2.5);
\draw [thick,fill=red!70] (-1,-1)--(-1,1)--(1,1)--(1,-1)--(-1,-1);
\draw [->] (-2,0)--(2,0);
\draw [->] (0,-2)--(0,2);

\node at (1,1.5)[right]{$\sigma_0$};
\end{tikzpicture}
  \caption{$\R P^1\times \R P^1\cong T^2$}
  \label{fig: center}
\end{subfigure}
\begin{subfigure}{0.24\textwidth}
   \centering
\begin{tikzpicture}[scale=0.6]
\draw[step=1.0,black!15,thin] (-2.5,-2.5) grid (2.5,2.5);
\draw [thick,fill=blue!10] (-1,-1)--(-1,1)--(1,1)--(1,-1)--(-1,-1);
\draw [->] (-2,0)--(2,0);
\draw [->] (0,-2)--(0,2);
\fill[thick, red!70] (0,0)  circle[radius=2pt];
\node at (1,1.5)[right]{$\sigma_1$};
\end{tikzpicture}
  \caption{$\mathbb{T}_{\Cl}=T^2$}
  \label{fig: center}
\end{subfigure}
\begin{subfigure}{0.24\textwidth}
  \centering
\begin{tikzpicture}[scale=0.6]
\draw[step=1.0,black!15,thin] (-2.5,-2.5) grid (2.5,2.5);
\draw [thick,fill=blue!10] (-1,-1)--(-1,1)--(1,1)--(1,-1)--(-1,-1);
\draw [->] (-2,0)--(2,0);
\draw [->] (0,-2)--(0,2);
\draw[thick, red] (-1,1)--(1,-1);

\node at (1,1.5)[right]{$\sigma_2$};
\end{tikzpicture}
  \caption{$\overline{\Delta}=S^2$}
 \label{fig: antidiagonal}
\end{subfigure}
\begin{subfigure}{0.24\textwidth}
  \centering
\begin{tikzpicture}[scale=0.6]
\draw[step=1.0,black!15,thin] (-2.5,-2.5) grid (2.5,2.5);
\draw [thick,fill=blue!10] (-1,-1)--(-1,1)--(1,1)--(1,-1)--(-1,-1);
\draw [->] (-2,0)--(2,0);
\draw [->] (0,-2)--(0,2);
\draw[thick, red] (0,-1)--(0,1);

\node at (1,1.5)[right]{$\sigma_3$};
\end{tikzpicture}
  \caption{$T^2$}
\end{subfigure}
\caption{Involutions of $\Box=[-1,1]^2$.}
 \label{fig: involutionsonS2S2}
\end{figure}
\\
{\it Involution $\sigma_0$.} Since the real locus of $S^2\times S^2=\C P^1\times \C P^1$ is diffeomorphic to $\R P^1\times \R P^1 \cong T^2$, so is $\Fix(R^{\sigma_0})$.\\\\
{\it Involution $\sigma_1$.} Since $\Fix(\sigma_1)$ is a singleton, the corresponding real Lagrangian is given by the Lagrangian fibre $\Fix(R^{\sigma_1})=\mu^{-1}\big((0,0)\big)\cong T^2$, which is called the \emph{Clifford torus} in $S^2\times S^2$.\\\\
{\it Involution $\sigma_2$.} We observe that
$$
\nu^{-1}(0)=\big\{(z_1,z_2,z_3,z_4)\mid |z_1|^2 +|z_3|^2 = 4,\ |z_2|^2 +|z_4|^2 = 4\big\}\cong S^3 \times S^3.
$$
Since $\sigma_2$ exchanges $v_1$ with $v_4$ and $v_2$ with $v_3$, we obtain
\begin{eqnarray*}
&& \Fix(\rho)=\{(z,w,\bar{w},\bar{z}) \mid z,w \in \C\},\\
&& \Fix(\rho_{T^3})=\{(t,s,s^{-1},t^{-1})\mid t,s\in S^1\}.
\end{eqnarray*}
Hence, 
$$
\nu_R^{-1}(0)=\{(z,w,\bar{w},\bar{z})\mid |z|^2+|w|^2=4\}\cong S^3
$$ 
and 
$K_R=K\cap \Fix(\rho_{T^3})=\{(t,t^{-1},t,t^{-1})\mid t\in S^1\}$. Hence the $K_R$-action on $\nu_R^{-1}(0)$ can be identified with the Hopf action on $S^3$, and we obtain
$$
\Fix(R^{\sigma_2})\cong \nu_R^{-1}(0)/K_R\cong S^2,
$$
which is called the \emph{antidiagonal sphere} $\Fix(R^{\sigma_2})=\overline{\Delta}:=\{(x,-x)\mid x\in S^2\}$.\\\\
{\it Involution $\sigma_3$.} Similarly, we have
\begin{eqnarray*}
	&&\nu_R^{-1}(0) = \{(z,x_1,\bar{z},x_2) \mid z\in \C,\ x_1,x_2\in \R,\ |z|^2=2,\ x_1^2+x_2^2=2\}\cong T^2,  \\
	&& K_R=\left\{(s_1,s_2,s_1,s_2)\mid s_1,s_2 \in \{1,-1\} \right\}\cong \Z_2^2,
\end{eqnarray*}
and hence 
$$
\Fix(R^{\sigma_3})\cong \R P^1\times \R P^1\cong T^2.
$$
Recall that any two embedded loops in $S^2$ are Hamiltonian isotopic if they divide the sphere into two discs with equal area. Using this, one can easily show that the real Lagrangian tori $\Fix(R^{\sigma_0})$, $\Fix(R^{\sigma_1})$, and $\Fix(R^{\sigma_3})$ are (pairwise) Hamiltonian isotopic to each other.
\end{example}

\begin{example}
Consider the monotone toric symplectic manifold $\C P^2 \# \overline{\C P^2}$ with moment polytope the isosceles trapezoid $\Delta$ depicted in Figure \ref{fig: blow up}. Then we have
$$
v_1 =(-1,0),\quad v_2 =(0,-1),\quad v_3 =(1,1),\quad v_4 =(-1,-1)
$$
and  $\kappa_1 =\dots= \kappa_4 =1$. Note that
$$
\pi=\begin{pmatrix}
	-1 & 0 & 1 & -1 \\
	0 & -1 & 1 & -1
\end{pmatrix},\quad K=\{(e^{2\pi i \alpha},e^{2\pi i \alpha},e^{2\pi i (\alpha + \beta)},e^{2\pi i \beta})\}, 
$$
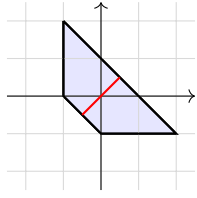
\begin{figure*}[h]
\begin{center}
\begin{tikzpicture}[scale=0.5]
\draw [thick,fill=blue!10] (-1,2)--(2,-1)--(0,-1)--(-1,0)--(-1,2);

\draw[step=1.0,black!15,thin] (-2.5,-2.5) grid (2.5, 2.5);
\draw [thick] (-1,2)--(2,-1)--(0,-1)--(-1,0)--(-1,2);

\draw [->] (-2.5,0)--(2.5,0);
\draw [->] (0,-2.5)--(0,2.5);
\draw [thick,red] (-0.5, -0.5)--(0.5, 0.5);
\end{tikzpicture}
\end{center}
\caption{Monotone blow-up of $\C P^2$}
\label{fig: blow up}
\end{figure*}
\noindent 
There is only one non-trivial involution on $\Delta$, namely $\sigma(x,y)=(y,x)$. We show that $\Fix(R^\sigma)\cong \R P^2\# \R P^2$ using the real Delzant construction. Observe that
$$
\nu^{-1}(0)=\Big\{(z_1,z_2,z_3,z_4)\ \Big|\ |z_1|^2 +|z_2|^2 + |z_3|^2  = 6,\ |z_3|^2+|z_4|^2=4 \Big\}.
$$
The involution $\sigma$ acts on $\Delta$ by exchanging $v_1$ with $v_2$ and leaving $v_3$ and $v_4$ invariant, whence
\begin{eqnarray*}
	&& \Fix(\rho) =\{
	(z,\bar{z},x,y)\mid z\in \C,\ x,y\in \R\}, \\
	&& \Fix(\rho_{T^4}) = \left \{ (t,t^{-1},s_1,s_2)\mid t\in S^1,\ s_1,s_2 \in \{1,-1\} \right\}\cong S^1\oplus \Z_2^2.
\end{eqnarray*}
We obtain
\begin{eqnarray*}
&&	\nu_R^{-1}(0) =\{(z,\bar{z}, x,y) \mid 2|z|^2+x^2=6,\ x^2+y^2=4\}\cong T^2,\\
&& K_R= \left\{(1,1,1,1),(-1,-1,-1,1),(1,1,-1,-1), (-1,-1,1,-1) \right\}\cong \Z_2 \oplus \Z_2.
\end{eqnarray*} 
We claim that the quotient map
$$
T^2 \cong \nu_R^{-1}(0) \longrightarrow \nu_R^{-1}(0)/K_R
$$
is a 4-fold covering of the Klein bottle $\R P^2\# \R P^2$. This follows from Table~\ref{tab: delpezzo}. To see this explicitly, we first identify $\nu_R^{-1}(0)$ with the torus $T^2$ obtained by the product of two circles, namely
$$
S_{xy}:=\{x^2+y^2=4\} \quad\text{and}\quad S_{z}:=\{2|z|^2+x^2=6\}.
$$
Note that $S_z$ varies depending on $x$.
We obtain the identification in Table~\ref{tab: kleinbottle}. Using this, we see that the quotient map above is a 4-fold covering of $\R P^2\#\R P^2$ as desired, see Figure \ref{fig: 4foldcover}.

\begin{table}[hbt]
  \begin{tabular}{c|c|c|c}

  $K_R$ &&$S_z$ &  $S_{xy}$\\
  \hline
  \rule{0pt}{3ex}    
$(1,1,1,1)$ & &$\id$ & $\id$ \\  \rule{0pt}{3ex}    
$(-1,-1,-1,1)$ &  & antipodal map & $y$-axis reflection\\  \rule{0pt}{3ex}    
$(1,1,-1,-1)$ & & $\id$ & antipodal map \\  \rule{0pt}{3ex}    
$(-1,-1,1,-1)$ & & antipodal map & $x$-axis reflection\rule{0pt}{4ex}    
  \end{tabular}
  \caption{Each element in $K_R$ is identified with the composition of the two corresponding maps.}
    \label{tab: kleinbottle}
\end{table}

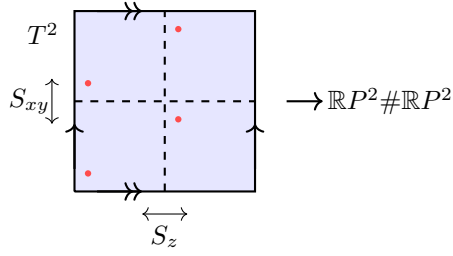
\begin{figure*}[h]
\begin{tikzpicture}[scale=0.6]


\draw [blue!10, fill=blue!10] (-2, -2)--(-2, 2)--(2, 2)--(2,-2)--(-2,-2);

\draw [thick, dashed] (-2,0)--(2,0);
\draw [thick, dashed] (0,-2)--(0,2);
\draw [thick] (-2,-2)--(-2,2)--(2,2)--(2,-2)--(-2,-2);

\draw [thick,->] (-2,-1.5)--(-2,-0.5);

\draw [thick,->] (2,-1.5)--(2,-0.5);

\draw [thick,->>] (-1.5,2)--(-0.5,2);
\draw [thick,->>] (-1.5,-2)--(-0.5,-2);


\draw [thick, ->] (2.7,0)--(3.5,0);
\node at (5,0){$\R P^2\# \R P^2$};
\node at (-2.7, 1.5){$T^2$};

\fill[thick, red!70] (-1.7, -1.6)  circle[radius=2pt];
\fill[thick, red!70] (0.3, -0.4)  circle[radius=2pt];
\fill[thick, red!70] (-1.7, 0.4)  circle[radius=2pt];
\fill[thick, red!70] (0.3, 1.6)  circle[radius=2pt];

\draw [<->] (-2.5, -0.5)--(-2.5,0.5);	
\node at (-3, 0){$S_{xy}$};
\draw [<->] (-0.5, -2.5)--(0.5,-2.5);	
\node at (0, -3){$S_z$};
\end{tikzpicture}
\caption{The torus $T^2$ as a 4-fold cover of the Klein bottle $\R P^2\# \R P^2$}
\label{fig: 4foldcover}	
\end{figure*}

\end{example}

\begin{example}
Consider the monotone toric symplectic manifold $\C P^2\# 2\overline{\C P^2}$ with moment polytope $\Delta$ given on the left in Figure \ref{fig: 2blowup}. Since the real locus of $\C P^2\# 2\overline{\C P^2}$ is diffeomorphic to $\#_3\R P^2$, the standard antisymplectic involution $R^0$ yileds the real Lagrangian $\Fix(R^0)\cong \#_3\R P^2$. We claim that the real Lagrangian $\Fix(R^\sigma)$ associated to the involution $\sigma(x,y)=(y,x)$ is diffeomorphic to $\R P^2$. To see this, recall that by Example \ref{example: cp2} the real Lagrangian $\Fix(R^{\sigma_2})$ in $\C P^2$ is diffeomorphic to $\R P^2$. Since the blow-ups of $\C P^2$ were performed away from the real Lagrangian $\Fix(R^{\sigma_2})$, we deduce that $\Fix(R^\sigma)\cong \R P^2$.
	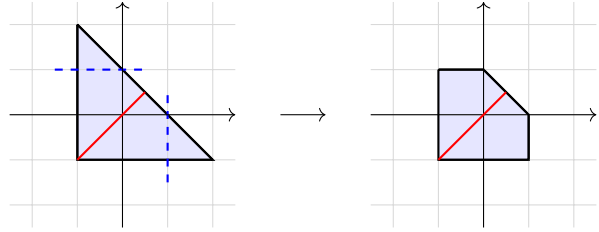
\begin{figure*}[h]
\begin{center}
\begin{tikzpicture}[scale=0.6]

\begin{scope}[xshift=-8cm]

\draw [thick,fill=blue!10] (-1,2)--(0,1)--(1,0)--(2,-1)--(-1,-1)--(-1,2);
\draw[step=1.0,black!15,thin] (-2.5,-2.5) grid (2.5, 2.5);
\draw [thick] (-1,2)--(0,1)--(1,0)--(2,-1)--(-1,-1)--(-1,2);

\draw [thick, dashed,blue] (-1.5,1)--(0.5,1);
\draw [thick, dashed,blue] (1,-1.5)--(1,0.5);

\draw [->] (-2.5,0)--(2.5,0);
\draw [->] (0,-2.5)--(0,2.5);
\draw [thick,red] (-1, -1)--(0.5, 0.5);	
\end{scope}

\draw [thick,fill=blue!10] (-1,1)--(0,1)--(1,0)--(1,-1)--(-1,-1);
\draw[step=1.0,black!15,thin] (-2.5,-2.5) grid (2.5, 2.5);
\draw [thick] (-1,1)--(0,1)--(1,0)--(1,-1)--(-1,-1)--(-1,1);

\draw [->] (-2.5,0)--(2.5,0);
\draw [->] (0,-2.5)--(0,2.5);
\draw [thick,red] (-1, -1)--(0.5, 0.5);

\draw [->] (-4.5,0)--(-3.5,0);
\end{tikzpicture}
\end{center}
\caption{Monotone two-fold blow-up of $\C P^2$}
\label{fig: 2blowup}
\end{figure*}
\end{example}

\begin{example}
Consider the three-fold monotone blow-up of $\C P^2$ as in Figure \ref{fig: threefold}. Then we have
\begin{eqnarray*}
&&	v_1=(1,0),\ v_2=(0,1),\ 
 v_3=(-1,0),\ v_4=(0,-1),\ v_5=(1,1),\ v_6=(-1,-1), \\
&& \kappa_1=\dots=\kappa_6=1.
\end{eqnarray*}
We exhibit the real Lagrangians corresponding to the four involutions,
\begin{eqnarray*}
&&\sigma_0=\begin{pmatrix}
	1 & 0 \\
	0 & 1
\end{pmatrix},\quad \sigma_1=\begin{pmatrix}
	-1 & 0 \\
	0 & -1
\end{pmatrix},\quad \sigma_2=\begin{pmatrix}
	0 & -1 \\
	-1 & 0
\end{pmatrix},\quad \sigma_3=\begin{pmatrix}
	0 & 1 \\
	1 & 0
\end{pmatrix}.
\end{eqnarray*}

\begin{figure*}[h]
\begin{center}
\begin{subfigure}{0.24\textwidth}
  \centering
\begin{tikzpicture}[scale=0.5]
\draw[step=2.0,black!15,thin] (-3,-3) grid (3,3);

\begin{scope}[xshift=-2cm, yshift=-2cm]
\draw [thick, fill=red!70] (0,2)--(2,0)--(4,0)--(4,2)--(2,4)--(0,4)--(0,2);

\draw [thick] (0,2)--(2,0)--(4,0)--(4,2)--(2,4)--(0,4)--(0,2);

\end{scope}

\draw [->] (-3,0)--(3,0);
\draw [->] (0,-3)--(0,3);
\end{tikzpicture}
\caption{$\#_4 \R P^2$}	
\end{subfigure}
\begin{subfigure}{0.24\textwidth}
  \centering
\begin{tikzpicture}[scale=0.5]
\draw[step=2.0,black!15,thin] (-3,-3) grid (3,3);

\begin{scope}[xshift=-2cm, yshift=-2cm]
\draw [thick, fill=blue!10] (0,2)--(2,0)--(4,0)--(4,2)--(2,4)--(0,4)--(0,2);

\draw [thick] (0,2)--(2,0)--(4,0)--(4,2)--(2,4)--(0,4)--(0,2);

\end{scope}

\draw [->] (-3,0)--(3,0);
\draw [->] (0,-3)--(0,3);
\fill[thick, red!70] (0, 0)  circle[radius=4pt];
\end{tikzpicture}
\caption{$T^2$}	
\end{subfigure}
\begin{subfigure}{0.24\textwidth}
  \centering
 \begin{tikzpicture}[scale=0.5]
\draw[step=2.0,black!15,thin] (-3,-3) grid (3,3);

\begin{scope}[xshift=-2cm, yshift=-2cm]
\draw [thick, fill=blue!10] (0,2)--(2,0)--(4,0)--(4,2)--(2,4)--(0,4)--(0,2);

\draw [thick] (0,2)--(2,0)--(4,0)--(4,2)--(2,4)--(0,4)--(0,2);

\end{scope}

\draw [->] (-3,0)--(3,0);
\draw [->] (0,-3)--(0,3);
\draw [red!70,thick] (2,-2) -- (-2,2);
\end{tikzpicture}
\caption{$S^2$} 
\end{subfigure}
\begin{subfigure}{0.24\textwidth}
  \centering
 \begin{tikzpicture}[scale=0.5]
\draw[step=2.0,black!15,thin] (-3,-3) grid (3,3);

\begin{scope}[xshift=-2cm, yshift=-2cm]
\draw [thick, fill=blue!10] (0,2)--(2,0)--(4,0)--(4,2)--(2,4)--(0,4)--(0,2);

\draw [thick] (0,2)--(2,0)--(4,0)--(4,2)--(2,4)--(0,4)--(0,2);

\end{scope}

\draw [->] (-3,0)--(3,0);
\draw [->] (0,-3)--(0,3);
\draw [red!70,thick] (1,1) -- (-1,-1);
\end{tikzpicture}
\caption{$\R P^2\# \R P^2$} 
\end{subfigure}

\end{center}	
\caption{Three-fold monotone blow-up of $\C P^2$}
\label{fig: threefold}
\end{figure*}
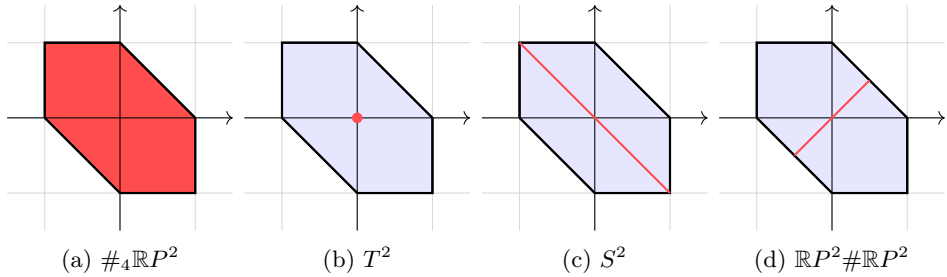
\noindent
{\it Involution $\sigma_1$}. Since $\Fix(\sigma_1)$ is a singleton, we have $\Fix(R^{\sigma_1})=\mu^{-1}\big((0,0)\big)\cong T^2$.\\\\
{\it Involution $\sigma_2$}. Note that the moment polytope of $\C P^2\#3\overline{\C P^2}$ can be seen as the polytope obtained by two fold blow-up of $S^2\times S^2$. Since the real Lagrangian in $S^2\times S^2$ corresponding to the antidiagonal line in the polytope $\Box$ is diffeomorphic to $S^2$ and the blow-ups are performed away from it, we obtain that $\Fix(R^{\sigma_2})\cong S^2$. \\\\
{\it Involution $\sigma_3$}. In a similar vein, since the real Lagrangian in the one point blow up $X_1$ of $\C P^2$, corresponding to the diagonal line, is diffeomorphic to $\R P^2\# \R P^2$, so is $\Fix(R^{\sigma_3})\cong \R P^2\# \R P^2$.
\end{example}

\subsection*{Acknowledgement}
The authors cordially thank Felix Schlenk for careful reading of the first draft. The paper was carried out when the authors visited the Institut de Math\'{e}matiques at Neuch\^{a}tel and the Korea Institute for Advanced Study at Seoul. We are grateful for their warm hospitality. JM specially thanks her advisor Suyoung Choi for continued support and encouragement. JB is supported by the grant 200021-181980/1 of the Swiss National Foundation. JK and JM are supported by Samsung Science and Technology Foundation under Project Number SSTF-BA1901-01. JM is supported by NRF-2019R1A2C2010989.

\bibliographystyle{abbrv}
\bibliography{mybibfile}

\end{document}